\newcommand{\attachingboundary}{\partial_\mathrm{a}}
\newcommand{\remainingboundary}{\partial_\mathrm{r}}
\newcommand{\incident}{\delta}
\newcommand{\acts}[2]{{}^{#1}\mkern-2mu{#2}}
\newcommand{\handleenumerationstyle}{\renewcommand{\labelenumi}{\textbf{\theenumi}}}
\title{Evaluating TQFT invariants from $G$-crossed braided spherical fusion categories via Kirby diagrams with 3-handles}
\begin{document}

\maketitle

\begin{abstract}
	A family of TQFTs parametrised by $G$-crossed braided spherical fusion categories has been defined recently as a state sum model and as a Hamiltonian lattice model.
	Concrete calculations of the resulting manifold invariants are scarce
	because of the combinatorial complexity of triangulations, if nothing else.
	Handle decompositions, and in particular Kirby diagrams are known to offer an economic and intuitive description of 4-manifolds.
	We show that if 3-handles are added to the picture,
	the state sum model can be conveniently redefined by translating Kirby diagrams into the graphical calculus of a $G$-crossed braided spherical fusion category.

	This reformulation is very efficient for explicit calculations,
	and the manifold invariant is calculated for several examples.
	It is also shown that in most cases,
	the invariant is multiplicative under connected sum,
	which implies that it does not detect exotic smooth structures.
\end{abstract}

\tableofcontents
\listoffixmes

\section{Introduction}

In the study of 4-dimensional topological quantum field theories (TQFTs),
few interesting examples are known,
and even fewer are defined rigorously as axiomatic TQFTs.
The fewest are economic to calculate.

Often, families of TQFTs like the Reshetikhin-Turaev model \cite{ReshetikhinTuraev}, the Turaev-Viro-Barrett-Westbury state sum \cite{TuraevViro:1992865, BarrettWestbury:1993Invariants} or the Crane-Yetter model \cite{CraneYetterKauffman:1997177} are indexed by an algebraic datum such as a fusion category with extra structure.
More recently \cite{BarkeshliBondersonChengWang2014SymmetryDefectsTopPhases, WilliamsonWangGCrossedTopPhases2017},
$G$-crossed braided fusion categories have been studied in Hamiltonian approaches to topological phases,
and a state sum model (and thus, an axiomatic TQFT) was defined \cite{Cui2016TQFTs}.
This \emph{$G$-crossed model} is thus defined explicitly, but it is hard to calculate concrete values for the partition function.

The Crane-Yetter model is famously invertible if the input datum is a modular category,
but it had been suspected that it is noninvertible for nonmodular categories,
with a rigorous proof only given more than two decades after its definition \cite{BaerenzBarrett2016Dichromatic}.
One reason seems to be that state sum models based on triangulations are easy to define, but hard to calculate.
Once the partition function is defined in terms of handle decompositions,
it is much easier to calculate concrete values,
and indeed the second open question in \cite[Section 7]{Cui2016TQFTs} asks for a description of the $G$-crossed model in terms of Kirby diagrams.

This is achieved here, adapting the techniques from \cite{BaerenzBarrett2016Dichromatic}.
As the central result,
we define for every $G$-crossed braided spherical fusion category $\mathcal{C}$ an invariant $I_\mathcal{C}$ of smooth, closed, oriented manifolds
and show (Theorem \ref{thm:equal to SSM}):
\begin{theorem*}
	Up to a factor involving the Euler characteristic,
	the invariant $I_\mathcal{C}$
	is equal to the state sum $Z_\mathcal{C}$ in \cite[(23)]{Cui2016TQFTs}.
	Explicitly, let $M$ be a connected, smooth, oriented, closed 4-manifold and $\mathcal{T}$ an arbitrary triangulation,
	then:
	\begin{equation*}
		I_\mathcal{C}(M) = Z_\mathcal{C}(M; \mathcal{T}) \cdot \qdim{\Omega_\mathcal{C}}^{1 - \chi(M)} \cdot \lvert G \rvert
	\end{equation*}
\end{theorem*}
The invariant $I_\mathcal{C}$ is in fact much easier to calculate than the state sum,
and several examples are given.
It is also shown that in most cases,
$I_\mathcal{C}$ does not differentiate between smooth structures on the same topological manifold.
\fxwarning{Still a bit short?}
\subsection*{Triangulations and handle decompositions}
\fxwarning{Not clear completely yet how to order this ssection with the other in Section 1}
\fxwarning{Reference?}
As a general theme in many flavours of topology,
spaces are built up from simple elementary building blocks.
A large class of topological spaces can be constructed as simplicial complexes or as CW-complexes.
In the former approach,
the elementary building blocks are simpler and the possibilities of gluing them are fewer,
whereas in the latter,
spaces can often be described much more succinctly.
In both cases, each building block has an inherent dimension,
which is a natural number.
Spaces with building blocks of dimension at most $n$ are usually said to be $n$-dimensional.

For smooth manifolds, the situation is similar.
They admit triangulations,
which yield simplicial complexes,
but also handle decompositions,
which are analogous to CW-complexes
.
Triangulations decompose an $n$-manifold into $k$-simplices,
and handle decompositions consist of $k$-handles,
respectively for $0 \leq k \leq n$.

We favour handle decompositions over triangulations here because they are much more succinct.\footnote{%
In higher dimensions, there is another reason:
Triangulations do not completely specify a smooth structure,
but only a piecewise linear (PL) structure,
whereas handle decompositions exist both in the smooth and in the PL category.
In four dimensions, the PL and smooth categories are still equivalent,
which allows to formulate smooth TQFTs as state sum models over triangulations.
}
One can specify the 4-dimensional sphere $S^4$ with as little as a 0-handle and a 4-handle,
while the standard triangulation coming from the boundary of the 5-simplex contains 20 triangles.
It is no surprise that state sum models are often defined for triangulations,
but no values are calculated even for simple manifolds
(as in \cite{Cui2016TQFTs, CraneYetterKauffman:1997177}).
Evaluating TQFTs on bordisms via handle decompositions is both conceptually more direct \cite{JuhaszTQFTsSurgery14},
as well as computationally more efficient.
For example, the Crane-Yetter model was famously shown to be invertible for modular categories using handle decompositions \cite[Theorem 4.5]{Roberts:1995SkeinTheoryTV},
and most concrete values for premodular categories have only been calculated more than 20 years after its state sum definition,
again using handle decompositions \cite{BaerenzBarrett2016Dichromatic}.

The price to pay is the additional complexity when attaching handles to each other.
While the gluing data of simplices is completely combinatorial,
handle attachment data is quite topological in nature,
and is best described in diagrams.
In contrast to simplices,
handles of all dimensions $0 \leq k \leq n$
must be thickened to $n$ dimensions before they are glued along their $(n-1)$-dimensional boundaries.
Consequently, all of the attachment data can be described diagrammatically in $(n-1)$-dimensions.
We thus describe 4-manifolds with 3-dimensional diagrams.

To evaluate the TQFT on them,
these diagrams are decorated with data from a (non-strict) 3-category.
In particular, a $G$-crossed braided category is a special case of a monoidal 2-category \cite[Section 6]{Cui2016TQFTs},
and thus a special case of a 3-category.
And indeed, there is a beautiful way to label diagrams of 4-dimensional handle decompositions with $G$-crossed braided spherical fusion categories ($G\times$-BSFCs),
by depicting 3-handles explicitly in the calculus.
\fxwarning{Maybe say why we need 3-handles? Because we need to label 1-simplices with group elements?}

\subsection*{Outline}

We begin by briefly introducing $G$-crossed braided spherical fusion categories in Section \ref{sec:intro Gx-BSCFs}.
Additionally to defining the invariant,
this article gives an introduction to Kirby diagrams with 3-handles,
which does not seem to exist in the literature.
Section \ref{sec:Kirby calculus} contains this material,
and it should be accessible to TQFT researchers with a basic background in Kirby calculus of 4-manifolds.
The aim of Section \ref{sec:Graphical calculus} is to show that the language of Kirby diagrams with 3-handles is very natural for graphical calculus in $G$-crossed braided spherical fusion categories.
This is the central ingredient to Section \ref{sec:invariant},
where the invariant $I_\mathcal{C}$ is defined.
Explicit example calculations and general properties of the invariant are given in Section \ref{sec:calculations}.
The connection to the state sum model from \cite{Cui2016TQFTs} and the corresponding TQFT is made in Section \ref{sec:ssm}.
Further ideas for generalisations,
in the direction of defining spherical fusion 2-categories,
are discussed in Section \ref{sec:3-cats}.

\medskip

The figures in this article often make use of colours to illustrate calculations and may be harder to interpret when printed in black and white.

\section{Introduction to $G$-crossed braided spherical fusion categories}
\label{sec:intro Gx-BSCFs}
\subsection{Spherical fusion categories}
\label{sec:spherical intro}

Spherical fusion categories over $\Co$ are well-known,
and basic knowledge is assumed.
We will mainly use notation, conventions and graphical calculus from \cite[Section 2.1]{BaerenzBarrett2016Dichromatic}.
For convenience, an overview over the relevant notation is given in Table \ref{table:notation}.
\begin{table}
	\begin{tabular}{p{0.45\textwidth}p{0.45\textwidth}}
		\toprule
		Spherical fusion category
			& $(\mathcal{C}, \otimes, \mathcal{I})$
		\\\midrule
		Objects
			& $A, B, C, \dots$
		\\\midrule
		Morphisms
			& $\iota \in \langle A \otimes B \otimes \cdots \rangle \coloneqq \mathcal{C}(\mathcal{I}, A \otimes B \otimes \cdots)$
		\\\midrule
		Simple objects
			& $X, Y, Z$
		\\\midrule
		Set of representants of isomorphism classes of simple objects
			& $\mathcal{O}(\mathcal{C})$
		\\\midrule
		Dual object
			& $A^*$
		\\\midrule
		Nondegenerate morphism pairing
			& $(-, -)\colon \langle A_1 \otimes \cdots A_n \rangle \otimes \langle A_n^* \otimes \cdots A_1^* \rangle \to \Co$
		\\\midrule
		Trace
			& $\tr\colon \mathcal{C}(A,A) \to \Co$
		\\\midrule
		Categorical dimension
			& $\qdim{X} \coloneqq \tr(1_x)$
		\\\midrule
		Fusion algebra (complexified Grothendieck ring)
			& $\Co[\mathcal{C}]$
		\\\midrule
		``Kirby colour''
			& $\Omega_\mathcal{C} \coloneqq \bigoplus_{X \in \mathcal{O}(\mathcal{C})} \qdim{X} X \in \Co[\mathcal{C}]$
		\\\bottomrule
	\end{tabular}
	\caption{Notation and conventions in spherical fusion categories.}
	\label{table:notation}
\end{table}
We will augment the graphical calculus of fusion categories with ``round coupons'' for morphisms
(this is justified because a pivotal structure is assumed to exist)
as introduced in \cite[Section 5.3]{BakalovKirillov2001} and explained for example in \cite[Section 1]{BalsamKirillov1}.
Essentially, morphisms do not differentiate between source and target,
as these can be interchanged coherently by means of duals.

We briefly revisit the cyclically symmetric definition of morphism spaces:
\begin{align}
	\iota \in \langle A_1 \otimes A_2 \otimes \cdots \otimes A_n \rangle
		& \coloneqq \mathcal{C}(\mathcal{I}, A_1 \otimes A_2 \otimes \cdots)
	\\	& \stackrel{\mathllap{\text{pivotal structure}}}{\cong} \mathcal{C}(\mathcal{I}, A_{j+1} \otimes \cdots \otimes A_n \otimes A_1 \otimes \cdots \otimes A_j)
	\\	& \stackrel{\mathllap{\text{dualisation}}}{\cong} \mathcal{C}(A_j^* \otimes A_{j-1}^* \otimes \cdots \otimes A_1^* \otimes A_n^* \otimes \cdots \otimes A_k^*, A_j \otimes \cdots \otimes A_{k-1})
\end{align}
The morphism pairing
$(-, -)\colon \langle A_1 \otimes \cdots A_n \rangle \otimes \langle A_n^* \otimes \cdots A_1^* \rangle \to \Co$
is defined by dualisation, composition, and the isomorphism $\mathcal{C}(\mathcal{I},\mathcal{I}) \cong \Co$.
It is \emph{nondegenerate},
which implies the existence of dual bases $\{\alpha_i\} \subset \langle A_1 \otimes \cdots A_n \rangle$ and $\{\tilde{\alpha}_i\} \subset \langle A_n^* \otimes \cdots A_1^* \rangle$ satisfying $(\tilde{\alpha}_i,\alpha_j) = \delta_{i,j}$.
The choice of $\{\alpha_i\}$ is arbitrary and determines $\{\tilde{\alpha}_i\}$ completely.
\begin{definition}
	\label{def:dual bases}
	Together, the dual bases define a unique entangled vector:
	\begin{equation}
		\sum_i \alpha_i \otimes \tilde{\alpha}_i \in \langle A_1 \otimes \cdots A_n \rangle \otimes \langle A_n^* \otimes \cdots A_1^* \rangle
	\end{equation}
	Especially in the graphical calculus (Figure \ref{fig:dual bases}),
	this vector will often be abbreviated as $\alpha \otimes \tilde{\alpha}$,
	suppressing the indices.
\end{definition}
\begin{figure}
	\centering
	\begin{equation*}
		\begin{tikzpicture} [ baseline ]
			\node [ plaquette ] (alpha)                  {$\alpha$};
			\node [ plaquette ] (alphatilde) at (3, 0) {$\tilde\alpha$};
			\draw [ thick ]
				foreach \i in {1, 2, ..., 6}
				{(alpha)      -- ++(\i*60:0.9) ++(\i*60:0.25) node {$A_\i$}};
			\draw [ thick ]
				foreach \i in {1, 2, ..., 6}
				{(alphatilde) -- ++(180-\i*60:0.9) ++(180-\i*60:0.25) node {$A_\i^*$}};
		\end{tikzpicture}
		=
		\sum_i
		\begin{tikzpicture} [ baseline ]
			\node [ plaquette ] (alpha)                  {$\alpha_i$};
			\node [ plaquette ] (alphatilde) at (3, 0) {${\tilde\alpha}_i$};
			\draw [ thick ]
				foreach \i in {1, 2, ..., 6}
				{(alpha)      -- ++(\i*60:0.9) ++(\i*60:0.25) node {$A_\i$}};
			\draw [ thick ]
				foreach \i in {1, 2, ..., 6}
				{(alphatilde) -- ++(180-\i*60:0.9) ++(180-\i*60:0.25) node {$A_\i^*$}};
		\end{tikzpicture}
	\end{equation*}
	\caption{
		The dual bases convention with round coupons.
		Compare, e.g. \cite[(1.8)]{BalsamKirillov1}.
	}
	\label{fig:dual bases}
\end{figure}

\subsection{$G$-crossed braided spherical fusion categories}
\label{sec:Gx intro}

$G$-crossed braided (spherical) fusion categories
(short: $G\times$-BSFCs)
were introduced in \cite{TuraevHFTAndGCats2000} as ``crossed group categories'' and have since received a diversity of names such as $G$-crossed braided SFCs,
$G$-equivariant braided SFCs or braided $G$-crossed SFCs.
We will adopt the nomenclature $G\times$-BSFC,
but warn that $G\times$-BSFCs are usually \emph{not} braided.
Rather, they carry a new structure, the \emph{crossed braiding}.

Just as fusion categories categorify and vastly generalise finite groups,
$G\times$-BSFCs categorify finite crossed modules.
(This viewpoint is implemented explicitly in \cite[Section 4.2]{Cui2016TQFTs}.)
A crossed module consists of two groups $H$ and $G$,
a homomorphism $\deg\colon H \to G$ and a group action of $G$ on $H$.
The group $H$ is ``crossed commutative'',
that is, commutative up to an action by $G$.
This axiom is usually called the ``Peiffer rule''.
$G\times$-BSFCs now categorify finite crossed modules in the following way:
The group $H$ is generalised to a spherical fusion category $\mathcal{C}$,
while $G$ stays a group.
The homomorphism $\deg$ gives way to a $G$-grading of $\mathcal{C}$.
Just as abelian groups are usually categorified to braided fusion categories
and the commutativity axiom is replaced by the braiding,
the Peiffer rule is now replaced by a crossed braiding.

Writing out the axioms explicitly
and following the notation from \cite[Section 2.1]{Cui2016TQFTs},
we have:
\begin{definitions}
	Let $G$ be a finite group and $\mathcal{C}$ a spherical fusion category.
	Denote by $\underline{G}$ the discrete monoidal category whose objects are elements of $G$
	and the monoidal structure matches the group structure.
	\begin{itemize}
		\item A $G$-grading on $\mathcal{C}$ is a decomposition $\mathcal{C} \cong \bigoplus_{g \in G} \mathcal{C}_g$ into semisimple linear categories
			such that $\mathcal{C}_{g_1} \otimes \mathcal{C}_{g_2} \subset \mathcal{C}_{g_1g_2}$.
			This defines a function $\deg\colon \mathcal{O}(\mathcal{C}) \to G$.
		\item A $G$-action on $\mathcal{C}$ is a monoidal functor $(F, \eta, \epsilon)\colon \underline{G} \to \Aut_{\otimes,\text{piv}} (\mathcal{C})$.
			Explicitly, there is a monoidal, pivotal automorphism of $\mathcal{C}$ for each group element $g$.
			We will usually abbreviate $\acts{g}{X} \coloneqq F(g)(X)$,
			and thus the coherence isomorphisms are $\eta(g_1,g_2)_X \colon \acts{g_1}{\left(\acts{g_2}{X}\right)} \xrightarrow{\cong} \acts{g_1g_2}{X}$ and $\epsilon_X\colon \acts{e}{X} \xrightarrow{\cong} X$.
		\item A \textbf{$G$-crossed braided spherical fusion category} consists of the following structure and axioms:
			\begin{itemize}
				\item A spherical fusion category $\mathcal{C}$,
				\item a $G$-grading on $\mathcal{C}$,
				\item a $G$-action on $\mathcal{C}$ such that $\acts{g}{ \left(\mathcal{C}_{g'}\right)} \subset \mathcal{C}_{gg'g^{-1}}$,
				\item for every $g \in G, X \in \mathcal{C}_g, Y \in \mathcal{C}$,
					a natural isomorphism $c_{X,Y} \colon X \otimes Y \to \acts{g}{Y} \otimes X$,
					which is called the \textbf{crossed braiding},
				\item subject to certain axioms which are detailed e.g. in \cite[Definition 2.2]{Cui2016TQFTs} or \cite[Definition 4.41]{OnBraidedFusionCats}.
			\end{itemize}
	\end{itemize}
\end{definitions}
\begin{remark}
	A $G\times$-BSFC is usually not braided.
	$c$ is only a braiding if the $G$-action is trivial.
\end{remark}
\begin{remark}
	\label{rem:Gx grading not faithful}
	The $G$-grading is not required to be faithful,
	i.e. $\mathcal{C}_g$ may be 0 for some $g \neq e$.
	In contrast to mere graded fusion categories,
	such a $G\times$-BSFC is not always equivalent to a faithfully graded one for a subgroup of $G$,
	since the $G$-action can contain information about the whole group.
\end{remark}
\begin{examples}
	\begin{itemize}
		\item Any ribbon fusion (``premodular'') category is a spherical braided fusion category,
			and thus a $G\times$-BSFC for $G$ the universal grading group and the trivial action.
		\item \cite[Section 4.4]{OnBraidedFusionCats} explains how a braided inclusion $\Rep(G) \subset \mathcal{D}$ of the representations of a finite group into a braided fusion category yields a $G\times$-BSFC.
	\end{itemize}
\end{examples}

\section{Kirby calculus with 3-handles}
\label{sec:Kirby calculus}

\subsection{Handle decompositions}

In this section, handle decompositions of 4-manifolds and their 3-dimensional diagrams are described.
(All manifolds will be assumed to be smooth, oriented and compact.)
While handle decompositions and Kirby diagrams are described extensively in the literature \cite{GompfStipsicz, Akbulut:4manifolds, Kirby:Topology4Manifolds},
3-handles are rarely described pictorially,
the only case known to the author being \cite{HarerKasKirby86HandlebodyComplexSurfaces}.
This will turn out to be an important conceptual clarification and computational simplification in the graphical calculus of $G\times$-BSFCs,
and is thus introduced here at length,
interspersed with a recapitulation of well-known material.

\fxwarning{References for these definitions}
\begin{definitions}
	\label{def:handles}
	An \textbf{$n$-dimensional $k$-handle} is the manifold with corners\footnote{%
		Details and references about manifolds with corners and their boundaries can be found in Appendix \ref{sec:corners}.
	}
	$h_k \coloneqq D^k \times D^{n-k}$.
	\begin{itemize}
		\item From its corner structure,
			the boundary of a $k$-handle $\partial h_k = S^{k-1} \times D^{n-k} \cup D^k \times S^{n-k-1}$
			is split into the
			\textbf{attaching region} $\attachingboundary h_k = S^{k-1} \times D^{n-k}$
			and the \textbf{remaining region} $\remainingboundary h_k = D^k \times S^{n-k-1}$.
		\item $S^{k-1} \times \{0\} \subset \attachingboundary h_k$ is called the \textbf{attaching sphere}.
		\item $\{0\} \times S^{n-k-1} \subset \remainingboundary h_k$ is called the \textbf{remaining sphere},
		or ``belt sphere''.
		\fxerror{Decide whether we want to have ``belt sphere'' globally.}
	\end{itemize}
\end{definitions}

We wish to decompose $n$-manifolds $M^n$ into a filtration $\emptyset = M_{-1} \subset M_0 \subset M_1 \subset \cdots \subset M_n \cong M$,
where each $M_k$ is produced from $M_{k-1}$ by attaching $k$-handles.
This decomposition is then called a \emph{handlebody},
and it is often used interchangeably with $M$,
or regarded as extra structure on $M$.

\begin{definitions}
	\begin{itemize}
		\item A \textbf{$k$-handle attachment map} $\phi$ on an $n$-manifold $M$ is an embedding of the attaching region $\attachingboundary h_k$ of an $n$-dimensional $k$-handle into $\partial M$.
		\item The result of a handle attachment $\phi\colon \attachingboundary h_k \hookrightarrow \partial M$ is the manifold $M \cup_\phi h_k$,
			where the handle is glued along the embedded attaching region.
		\item A \textbf{$(-1)$-handlebody} is the empty set.
		\item A \textbf{$k$-handlebody} is a $(k-1)$-handlebody and successive $k$-handle attachments on it.
		\item A handle decomposition of a manifold $M^n$ is a diffeomorphism to an $n$-handlebody.
	\end{itemize}
\end{definitions}

\begin{remark}
	\label{rem:surgery}
	When attaching a $k$-handle to a manifold $M$ along $\phi$,
	the attaching boundary is removed and the remaining boundary added.
	The boundary of the result of a handle attachment is thus:
	\[ \partial (M \cup_\phi h_k) = \partial M \backslash \phi(\attachingboundary h_k) \cup \remainingboundary h_k \]
	This operation is known as \textbf{performing surgery} on $\partial M$ along $\phi$.
\end{remark}
\begin{remark}
	By definition, the attaching regions of the attached handles are in the interior of the resulting manifold,
	therefore further handles are always attached to the remaining regions of previous handles,
	explaining the naming choice ``remaining''.
\end{remark}
\fxwarning{Should we replace ``remaining boundary'' with ``belt region''?}

\fxwarning{Either at least reference to illustration or put one in myself}

\begin{theorem}[{Well-known, e.g. \cite[Section 4.2]{GompfStipsicz}}]
	Smooth manifolds have handle decompositions.
	In particular, smooth compact manifolds have decompositions with finitely many handles.
\end{theorem}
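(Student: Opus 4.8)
The plan is to deduce the statement from Morse theory, following the classical route (as in \cite[Section 4.2]{GompfStipsicz} or Milnor's treatment of the $h$-cobordism theorem). \textbf{Step 1 (existence of Morse functions).} First I would show that every smooth manifold $M$ admits a Morse function $f\colon M \to \mathbb{R}$, i.e. a smooth function whose critical points are all non-degenerate; when $M$ has boundary one additionally arranges $f$ to have no critical points on $\partial M$ and to restrict there to a Morse function, with $\partial M$ divided according to whether $f$ attains a local minimum or maximum along the inward normal. Existence is the usual genericity argument: after embedding $M$ in a Euclidean space $\mathbb{R}^N$, Sard's theorem shows that a generic linear height function (or a generic squared-distance function) is Morse, and a further small perturbation makes all critical values distinct. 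If $M$ is compact one may moreover take $f$ proper, and then $f$ has only finitely many critical points, since non-degenerate critical points are isolated while the critical set is closed, hence compact.

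\textbf{Step 2 (the two Morse lemmas).} Next I would recall the behaviour of the sublevel sets $M^a \coloneqq f^{-1}((-\infty,a])$ as $a$ increases. (i) If $[a,b]$ contains no critical value, then $M^b$ is diffeomorphic to $M^a$; this follows by integrating a gradient-like vector field for $f$, rescaled so that $f$ grows at unit rate along the flow. (ii) If $[a,b]$ contains exactly one critical value, attained at a single critical point of index $k$, then $M^b$ is diffeomorphic to the result of attaching an $n$-dimensional $k$-handle $h_k = D^k \times D^{n-k}$ to $M^a$ along an embedding $\attachingboundary h_k \hookrightarrow \partial M^a$. The proof of (ii) is the technical core: one applies the Morse Lemma to bring $f$ into the normal form $c - x_1^2 - \cdots - x_k^2 + x_{k+1}^2 + \cdots + x_n^2$ near the critical point, identifies the descending $k$-disc with the handle core $D^k \times \{0\}$, and uses the gradient flow to thicken this local model to an honest handle, smoothing the corners that appear. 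This is exactly the place where the manifold-with-corners structure of $h_k$ and the attaching/remaining decomposition recorded in the definitions above must be handled with care.

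\textbf{Step 3 (assembling the decomposition).} Finally I would run the induction. Choose regular values $a_{-1} < a_0 < a_1 < \cdots < a_n$ interleaving the critical values so that the critical points with value in $(a_{k-1}, a_k)$ are precisely those of index $k$; this is automatic once $f$ is made self-indexing by Smale's rearrangement lemma, though this normalisation is not needed for the bare statement. Setting $M_k \coloneqq M^{a_k}$, Lemma (i) shows nothing changes except when a critical value is crossed, and Lemma (ii) shows each crossing attaches a handle of the index of the critical point; grouping the $k$-handle attachments into a single stage yields the filtration $\emptyset = M_{-1} \subset M_0 \subset \cdots \subset M_n \cong M$ demanded by the definition of an $n$-handlebody. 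When $M$ is compact this terminates after finitely many stages and uses only finitely many handles. \textbf{Main obstacle.} As indicated, the one genuinely substantial point is Lemma (ii), the local-to-global passage from ``crossing an index-$k$ critical point'' to ``attaching a $k$-handle''; Steps 1 and 3 are transversality and bookkeeping.
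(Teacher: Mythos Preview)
Your proposal is correct and follows exactly the approach the paper indicates: the paper's entire proof reads ``The proof of this theorem is usually via Morse theory. Given a self-indexing Morse function, every critical point of index $k$ corresponds to a $k$-handle.'' You have simply unpacked this sentence into its standard constituent steps (existence of Morse functions, the two sublevel-set lemmas, and the assembly via a self-indexing $f$), so there is nothing to compare or correct.
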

The proof of this theorem is usually via Morse theory.
Given a self-indexing Morse function,
every critical point of index $k$ corresponds to a $k$-handle.

Since arbitrarily many different handle decompositions may describe the same manifold up to diffeomorphism,
it is important to relate them.
Luckily, there is a simple complete set of moves to translate from any two diffeomorphic handle decompositions.
\begin{definition}
	\label{def:cancellable}
	A $k$-handle $h_k$ and a $(k+1)$-handle $h_{k+1}$ are \textbf{cancellable} if the attaching sphere of $h_{k+1}$ intersects the remaining sphere (the belt sphere) transversely in one point.
\end{definition}
The terminology stems from the fact that $M \cup h_k \cup h_{k+1} \cong M$ if $h_k$ and $h_{k+1}$ are cancellable.
This diffeomorphism is called \emph{cancelling} the handle pair.
\begin{theorem}[{E.g. \cite[Theorem 4.2.12]{GompfStipsicz}}]
	\label{thm:handle moves}
	Two handle decompositions of the same manifold are related by a finite sequence of handle attachment map isotopies,
	attachment sequence reorderings,
	and handle pair cancellations.
\end{theorem}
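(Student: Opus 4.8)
The plan is to pass through Morse theory and then invoke Cerf theory; since the statement is classical, I would treat this as a sketch rather than grind out the analytic estimates. First I would recall the dictionary between handlebodies and Morse data: a handle decomposition of $M^n$ is equivalent to a self-indexing Morse function $f\colon M \to [0,n]$ together with a gradient-like vector field $\xi$. Each index-$k$ critical point $p$ contributes a $k$-handle; its attaching sphere is the intersection of the descending manifold of $p$ with the level set just below the critical value, and the attaching map $\phi$ is read off from the $\xi$-flow on a neighbourhood of that sphere. Conversely, a handlebody structure determines such a pair $(f,\xi)$ uniquely up to contractible choice. So it suffices to connect any two pairs $(f_0,\xi_0)$ and $(f_1,\xi_1)$ by a deformation built out of the three listed moves.

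Second, I would connect $f_0$ to $f_1$ inside the (contractible) space of smooth functions $M \to \mathbb{R}$ and make the path generic in the sense of Cerf: the chosen path $f_t$ is Morse for all but finitely many $t$, at each of which exactly one codimension-one degeneracy occurs --- either a birth-death, where a pair of critical points of consecutive indices is created or annihilated through a cubic ($A_2$) singularity modelled locally on $x^3 \pm tx$ plus a nondegenerate quadratic form, or a simple crossing, where two distinct critical values coincide for an instant. After a further small perturbation I would keep the functions self-indexing away from the special times by rescaling level sets. Under the dictionary of the first step, a birth-death event is exactly a handle-pair creation or cancellation, and the local cubic model is precisely what forces the transverse one-point intersection of attaching sphere and belt sphere demanded in Definition \ref{def:cancellable}; a critical-value crossing --- at which, generically, there is no $\xi$-flow line joining the two critical points, equivalently the descending manifold of one and the ascending manifold of the other are disjoint --- is exactly a reordering of the attachment sequence.

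Third, I would track the gradient-like vector field. For a fixed Morse function the space of gradient-like vector fields is convex, hence contractible, and a path of such fields integrates to an ambient isotopy of $M$ carrying one handlebody onto another; this is the content of the ``handle attachment map isotopies'' move. Along the generic path $f_t$ one selects $\xi_t$ compatibly --- possible by a parametrised partition-of-unity argument away from the degenerate times, and by the explicit local models at them --- so that the entire deformation from $(f_0,\xi_0)$ to $(f_1,\xi_1)$ is covered by isotopies, reorderings and cancellations. I expect the main obstacle to be precisely this simultaneous bookkeeping of the vector fields through the Cerf path: arranging that attaching and belt spheres vary only by ambient isotopy, that at each critical-value crossing there is no connecting flow line so the reordering is unobstructed, and that at each birth-death the new pair is genuinely cancellable with the stated transversality. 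None of this is conceptually deep, but making all of it precise at once is the real work, which is why the theorem is cited from \cite{GompfStipsicz} rather than reproved.
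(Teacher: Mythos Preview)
The paper does not prove this theorem at all: it is stated with a reference to \cite[Theorem 4.2.12]{GompfStipsicz} and used as a black box. Your sketch via the Morse--Cerf dictionary is the standard argument behind that citation and is correct in outline, so there is nothing to compare --- you have simply supplied the proof that the paper deliberately omits.
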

\begin{remark}
	A $k$-handle can be isotoped over another $k$-handle in a canonical way,
	this is called a \textbf{handle slide} \cite[Definition 4.2.10]{GompfStipsicz}.
	Since the attaching order for handles of the same level can be changed arbitrarily,
	any $k$-handle can be slid over any other $k$-handle.
\end{remark}

\begin{theorem}[{E.g. \cite[Proposition 4.2.13]{GompfStipsicz}}]
	\label{thm:One 0-handle and one n-handle}
	A connected, closed, smooth $n$-manifold has a handle decomposition with exactly one 0-handle and exactly one $n$-handle.
\end{theorem}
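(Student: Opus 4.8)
The plan is to begin with an arbitrary finite handle decomposition — one exists because smooth compact manifolds admit finite handle decompositions — and then use the moves of Theorem \ref{thm:handle moves} to successively cancel handles until a single 0-handle and a single $n$-handle remain. (Equivalently, one could reduce the numbers of index-$0$ and index-$n$ critical points of a self-indexing Morse function.)

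First I would reduce to one 0-handle. The 1-handlebody $M_1$ is connected: for $k \geq 2$ the attaching region $S^{k-1}\times D^{n-k}$ of a $k$-handle is connected, so attaching a $k$-handle cannot merge connected components, and hence $M = M_n$ being connected forces $M_{n-1}, \dots, M_1$ to be connected in turn. Since $M_1$ is a disjoint union of 0-handles (balls) with 1-handles glued on, its connectedness is equivalent to connectedness of the graph $\Gamma$ whose vertices are the 0-handles and whose edges are the 1-handles (each 1-handle joining the two 0-handles carrying its two attaching balls, or forming a loop if both lie on the same 0-handle). If there is more than one 0-handle, $\Gamma$ is a connected graph on at least two vertices and therefore has an edge between two distinct vertices: a 1-handle $h_1$ with one foot on a 0-handle $A$ and the other on a distinct 0-handle $B$. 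The belt sphere of $A$ is all of $\partial A$, so the attaching $0$-sphere of $h_1$ meets it transversely in exactly one point, and $(A,h_1)$ is a cancellable pair in the sense of Definition \ref{def:cancellable}. After sliding any other 1-handles with a foot on $A$ over $h_1$, cancelling the pair gives a handle decomposition of $M$ with one fewer 0-handle. Iterating, we arrive at exactly one 0-handle.

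Next I would dualize. Turning the decomposition upside down — reversing the filtration, or replacing a self-indexing Morse function $f$ by $n - f$ — exchanges $k$-handles with $(n-k)$-handles and again yields a handle decomposition of the closed manifold $M$. Its 0-handles are the $n$-handles of the original decomposition, and its $n$-handles are the 0-handles of the original, of which there is now exactly one. Running the argument of the previous paragraph on the dual decomposition reduces its number of 0-handles to one; the cancellations involved there affect only 0- and 1-handles of the dual, i.e. $n$- and $(n-1)$-handles of the original, and in particular never disturb the unique $n$-handle of the dual (the unique 0-handle of the original). Turning back right side up, we obtain a handle decomposition of $M$ with exactly one 0-handle and exactly one $n$-handle.

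The step that needs genuine care is the cancellation itself: one must be sure that a 0-handle attached to the rest of the handlebody by a single cancelling 1-handle really can be cancelled in the presence of the remaining handles — this is why the preliminary handle slides in the second paragraph are included — and one must observe, as above, that the second reduction uses only $(n-1)$- and $n$-handle moves and hence preserves the single 0-handle obtained in the first. The remainder is bookkeeping with the moves of Theorem \ref{thm:handle moves}.
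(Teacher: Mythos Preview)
Your proposal is correct and follows exactly the approach the paper sketches in one line: cancel 0-1-pairs until a single 0-handle remains, and dually for the $n$-handle. You have simply supplied the details the paper omits --- the connectedness of $M_1$, the graph argument locating a cancellable pair, and the observation that the dual reduction touches only $(n-1)$- and $n$-handles and hence preserves the unique 0-handle.
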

The idea of the proof is to cancel all 0-1-pairs until a single 0-handle remains,
and dually for the $n$-handle.

In general, it is not clear how to visualise intricate handle attachment maps.
In four dimensions though, all attachments happen inside three-dimensional spaces,
which allows us to depict them diagrammatically.

\subsection{Kirby diagrams}

For the remainder of the article,
we will assume handle decompositions to be in dimension $n = 4$.
The relevant special cases for 4-dimensional handles and their boundaries are listed in Table \ref{table:handles and their boundaries}.
\begin{table}
	\begin{center}
		\begin{tabular}{llll}
			\toprule
			$k$
				& Handle $h_k$
					& Attaching region $\attachingboundary h_k$
						& Remaining region $\remainingboundary h_k$
			\\\midrule
			0
				& $D^0 \times D^4$
					& $\emptyset$
						& $S^3 \cong \R^3 \cup \{\infty\}$
			\\
			1
				& $D^1 \times D^3$
					& $S^0 \times D^3 \cong \{-1, 1\} \times D^3$
						& $D^1 \times S^2 \cong [-1, 1] \times S^2$
			\\
			2
				& $D^2 \times D^2$
					& $S^1 \times D^2$
						& $D^2 \times S^1$
			\\
			3
				& $D^3 \times D^1$
					& $S^2 \times D^1$
						& $D^3 \times S^0$
			\\\bottomrule
		\end{tabular}
	\end{center}
	\caption{4-dimensional $k$-handles and their boundary,
		for $k \leq 3$.}
	\label{table:handles and their boundaries}
\end{table}

\fxwarning{Naming convention: attaching/remaining regions vs. boundaries}
\subsubsection{Remaining regions as canvases}
\label{sec:canvases}

It is possible to visualise handle decompositions of 4-manifolds as Kirby diagrams
by thinking of the remaining regions of the handles as \emph{drawing canvases}.
In fact, the remaining region of a 0-handle can be visualised as $\R^3$ with an additional point at infinity
(Figure \ref{fig:remaining region 0-handle}).
\begin{figure}
	\centering
	\begin{tikzpicture}
		\pic  at (-2, 0) { justaxes };
		\pic             { axesboundary = {$\remainingboundary h_0$} };
		\node at ( 3, 1) {$\cup \{\infty\}$};
	\end{tikzpicture}
	\caption{The remaining region of a 0-handle as a drawing canvas.}
	\label{fig:remaining region 0-handle}
	\fxwarning[layout={inline}]{It's a little bit pointless if we don't draw into it?
		Or is it ok because we draw into such a thing later?}
	\fxwarning[layout={inline}]{
		Is it confusing that the axes never show up again later?
		There is a slight inconsistency when we use them.
		We could say at some point that from now on,
		we won't use them anymore.}
	\fxwarning[layout={inline}]{The axes can come out much too light in print}
\end{figure}
Since $\attachingboundary h_k \cup \remainingboundary h_k \cong S^3$,
the remaining regions of higher handles can be visualised in the same way,
although with the attaching region removed from the canvas.

Initially, it is always possible to attach a 0-handle to the empty manifold,
which instantiates a new, empty drawing canvas.
Higher handles for $1 \leq k \leq 3$ are attached to the existing handlebody
by drawing their attaching region (Figure \ref{fig:illustrations attaching regions}) on the existing canvases,
which corresponds to embedding it onto the boundary of the handlebody.
\begin{figure}
	\centering
	\begin{tabular}{lc}
		\toprule
		Handle index $k$
			& Attaching region $\attachingboundary h_k$
		\\\midrule
		0
			& $\emptyset$
		\\\midrule
		1
			& \begin{tikzpicture}
				\node [ 1-handle ] at (-1, 0) {+};
				\node [ 1-handle ] at ( 1, 0) {$-$};
			\end{tikzpicture}
		\\\midrule
		2
			& \begin{tikzpicture}
				\pic {2-handle with crosssection};
			\end{tikzpicture}
		\\\midrule
		3
			& \begin{tikzpicture}
				\draw [ 3-handle curved ]
					circle [ radius = 1 ];
			\end{tikzpicture}
		\\\bottomrule
	\end{tabular}
	\caption{Attaching regions of 1-, 2- and 3-handles.}
	\label{fig:illustrations attaching regions}
\end{figure}
\fxwarning{Might we want to illustrate the remaining regions,
	except for the 0-handle (which can be done separately)?
	Or is it enough to say that it's S³ without the attaching region?
	One advantage is if we can illustrate the canonical attachment in the remaining region.
	Another is showing the belt sphere.
	Figure \ref{fig:pushing 1-handle} is already enough for the 1-handle.
	Maybe add 2-handle belt sphere (e.g. in 2-3 cancellation) and done?
}
Here, it will be enough to simply draw the attaching \emph{sphere},
as will be justified shortly.

\fxwarning{
	Here figure demonstrating black board convention for 2-handles?
	Or at least just a thickly drawn 2-handle (so that nobody expects the full torus)
}

As in Theorem \ref{thm:One 0-handle and one n-handle},
a 1-handle which is attached to two different 0-handles can cancel one of them
(Figure \ref{fig:0-1-cancellation}),
merging the two canvases.
If further handles are attached to the cancelling 0-handle,
their attachment maps can be isotoped across the (remaining boundary of the) 1-handle into the (remaining boundary of the) other 0-handle.
We will therefore usually assume handle decompositions to have exactly one 0-handle,
and omit the coordinate axes.
(By the same theorem, we will assume exactly one 4-handle to be present,
and never explicitly specify it.)
Examples of higher cancellations and slides will be reviewed in Section \ref{sec:handle moves}.
\begin{figure}
	\centering
	\begin{tikzpicture}
		\pic             { axes={$\remainingboundary h_{0, A}$} };
		\pic [ withRP2labels ]
			at (2,  2.2) {RP2};
		\node [ 1-handle, shading = 1handleshadingcyan ] (h1+) {+};
		\node [ below = 0.4cm ] at (h1+) {$\attachingboundary^+ h_{1,\beta}$};

		\begin{scope} [ xshift = 7cm ]
		\pic             { axes={$\remainingboundary h_{0, B}$} };
		\pic [withS2S2labels]
			at (2,  2.2) {S2S2twisted};
		\node [ 1-handle, shading = 1handleshadingcyan ] (h1-) {$-$};
		\node [ below = 0.4cm ] at (h1-) {$\attachingboundary^- h_{1,\beta}$};
		\end{scope}
	\end{tikzpicture}

	\bigskip

	\begin{tikzpicture}
		\draw [ -> ] (-7, 1.5) -- node [ above ] {0-1-cancellation} +(3, 0);

		\pic                 { axes = {$\remainingboundary h_{0, A}$} };
		\pic  at (2,  2) {RP2};
		\pic  at (2,  0) {S2S2twisted};
	\end{tikzpicture}
	\caption{The handles $h_{0,B}$ and $h_{1,\beta}$ cancel each other.
		The 0-1-cancellation move merges two drawing canvases.
	}
	\label{fig:0-1-cancellation}
\end{figure}

A handle can be attached onto several other handles of smaller index,
in which case its attaching sphere is spread over several remaining regions,
as for example in Figure \ref{fig:RP3}.

\begin{figure}[!htp]
	\centering
	\begin{tikzpicture}
		\newdimen\threehandleradius
		\tikzmath{
			\myangle           = 10;
			\threehandleradius = 2cm;
		}
		\begin{scope} [ xshift = 3.5cm ]
		\pic { axes = {$\remainingboundary h_0$}};
		\coordinate (centre) at (1, 1);
		\path [ 3-handle curved ] (centre) circle [ radius = \threehandleradius ];
		\pic at (centre) { RP2 };
		\end{scope}

		\begin{scope} [ yshift = -6cm ]
		\pic { axes = {$\remainingboundary h_1$}};
		\begin{scope} [ shift = {(1, 1)} ]
		\draw [ thick, looseness = 0.8 ]
			(180-\myangle:\threehandleradius)
				to [ out = 90 - \myangle, in = 180            ]
			(0.5, 1)
				to [ out =  0           , in =  90 + \myangle ]
			(\myangle:2cm) 
		;
		\path [ 3-handle curved ] circle [ radius = 2cm ];
		\node (hd1) [ 1-handle-shape, forbiddenregion ] at (-1, 0) {};
		\node (hd2) [ 1-handle-shape, forbiddenregion ] at ( 1, 0) {};
		\draw [ thick ]
			(hd1) -- (hd2)
			(hd1)
				to [ out = 180, in = 270 - \myangle ]
			(180-\myangle:2cm)
			(hd2)
				to [ out =   0, in = 270 + \myangle ]
			(    \myangle:2cm)
		;
		\end{scope}

		\begin{scope} [ xshift = 7cm ]
		\pic { axes = {$\remainingboundary h_2$}};
		\begin{scope} [ shift = {(1, 1)} ]
		\draw [ forbidden 2-handle, looseness = 0.8 ]
			(180-\myangle:\threehandleradius)
				to [ out = 90 - \myangle, in  = 180            ]
			(0.5, 1)
				to [ out =  0           , in  =  90 + \myangle ]
			(\myangle:2cm)
				to [ in  =  0           , out = 270 + \myangle ]
			(0, -0.5)
				to [ out = 180          , in  = 270 - \myangle ]
			(180-\myangle:2cm)
		;
		\path [ 3-handle curved ] circle [ radius = 2cm ];
		\clip [ looseness = 0.8 ]
			(-3, 1) --
			(180-4*\myangle:2cm)
				arc
					[ start angle = 180 - 4 * \myangle
					, end angle   = 180 -     \myangle
					, radius      = \threehandleradius
					]
			(180-\myangle:2cm)
				to  [ in  = 180, out = 270 - \myangle ]
			(0, -0.5)
				to  [ out =   0, in  = 270 + \myangle ]
			(    \myangle:2cm)
				arc
					[ start angle =     \myangle
					, end angle   = 4 * \myangle
					, radius      = \threehandleradius
					]
				-- (3, 1) -- (3, -2) -- (-3, -2) -- (-3, 1)
		;
		\draw [ forbidden 2-handle, looseness = 0.8 ]
			(180-\myangle:\threehandleradius)
				to [ out = 90 - \myangle, in  = 180            ]
			(0.5, 1)
				to [ out =  0           , in  =  90 + \myangle ]
			(\myangle:2cm)
				to [ in  =  0           , out = 270 + \myangle ]
			(0, -0.5)
				to [ out = 180          , in  = 270 - \myangle ]
			cycle
		;
		\end{scope}
		\end{scope}
		\end{scope}
	\end{tikzpicture}
	\caption{
		A handle decomposition of $[-1, 1] \times \RP 3$ where each $k$-handle is attached to every $j$-handle for $j < k$.
		In particular, the single 3-handle is attached to $h_1$ and $h_2$:
		Its attaching sphere touches the attaching spheres of $h_1$ and $h_2$ in $\partial h_0$
		(and $h_2$'s attaching sphere further in $\remainingboundary h_1$),
		and enters their respective remaining regions,
		partitioning it into six pieces.
	}
	\fxwarning[layout={inline}]{
		There is a subtlety with orientations here. Should we add that?
		Only if they are unintuitive
		(i.e. if the 3-handle orientation jumps on the different parts).
		Also, should we somehow show the way how the different parts are partitioned?
	}
	\label{fig:RP3}
	\fxnote[layout={inline}]{
		Isn't this confusing since we said that all mfds are closed from here on?
		No we didn't say that
	}
	\fxnote[layout={inline}]{
		Is this maybe $\RP 4$?
		No, that's not orientable but the 1-handle is attached in an orientable way.
	}
\end{figure}

\subsubsection{Attaching spheres and framings}

Up to isotopy, 1-handle and 3-handle attachments are determined by the embedding of the attaching sphere:
\fxwarning{Decide whether in general its = or $\cong$ in other situations.
	I guess it's actually often = because we really mean the standard handle.
}
An embedding of $\attachingboundary h_1 = S^0 \times D^3$ is,
up to isotopy,
specified by the embedding of the two points of $S^0$.

Similarly, a 3-handle attachment is essentially specified by the embedding of an $S^2$ \cite[Example 4.1.4 c]{GompfStipsicz}.
Since usually further handles are attached to 1-handles,
but not to 3-handles,
we will draw the complete attaching regions of 1-handles,
but only the attaching spheres of 3-handles,
as in Figure \ref{fig:illustrations attaching regions}.

Furthermore, it is sometimes helpful to mark the orientation of $S^0$ in $\attachingboundary h_1 = S^0 \times D^3$ by denoting one point as $+$ and the other as $-$,
and consequently denoting $\attachingboundary h_1 \cong \attachingboundary^+ h_1 \sqcup \attachingboundary^- h_1 = D^3_+ \sqcup D^3_-$.\footnote{%
	This is not to be confused with the independent notion of relative handle decompositions.
}
Similarly, the attaching $S^2$ of a 3-handle may carry an orientation.
The result of the attachment does not depend on the choice of orientation.

A 2-handle is attached along $\attachingboundary h_2 = S^1 \times D^2$,
and an embedding $S^1 \hookrightarrow \R^3$ can be knotted,
or even linked to other 2-handles.
The attachment carries more information than the mere embedding of its attaching $S^1$, though:
It may be \emph{framed}, that is, the $D^2$ component can be twisted by any integer multiple of a full turn.
It is possible to denote framings by labelling 2-handle attachments with integers,
but for our purposes it is more pragmatic to stipulate the \textbf{blackboard conventions}:
Our diagrams are not truly drawn in $\R^3$, but in a projection onto $\R^2$,
and this projection specifies a canonical framing on every curve.
For further details, we refer to Appendix \ref{sec:blackboard framing}.

\subsubsection{Kirby conventions}

Drawing the attaching spheres inside all remaining regions is uneconomical,
but fortunately not necessary.
Assume we have already attached a $k$-handle $h_k$ to a manifold ($k \in \{1, 2\})$,
and then attach a $j$-handle $h_j$ for $j > k$ gradually,
starting to draw the attaching sphere in $\remainingboundary h_0$ and aiming to continue into $\remainingboundary h_k$.
Then the attaching sphere of $h_j$ will eventually intersect the boundary of the attaching region of $h_k$, $\partial \attachingboundary h_k \cong S^{k-1} \times S^{3-k}$.
This intersection already canonically determines an attachment inside $\attachingboundary h_k$,
as we will see shortly.
By always assuming this convention,
the part of the diagram inside $\remainingboundary h_0$ completely determines the handle attachment.

\fxnote{Isn't there a high brow way of distinguishing convention-following attachments from all attachments?}
\fxwarning{This stuff about convention-following is maybe easier if we squash the remaining region so it is arbitrarily small?}

\begin{definition}
	\label{def:regular handle decomposition}
	A handle decomposition is \textbf{regular} iff:
	\begin{enumerate}
		\item The attachment of a $k$-handle is outside the remaining regions of other $k$-handles.
		\item No attachment intersects with $\infty \in \remainingboundary h_0 \cong \R^3 \cup \{\infty\}$.
	\end{enumerate}
\end{definition}

\begin{remarks}
	\begin{enumerate}
		\item This condition is equivalent to the requirement $k$-handles are attached only onto the remaining regions of $j$-handles for $j < k$ (strictly).
		\item This condition ensures that the diagram can be drawn in a bounded region of $\R^3$.
	\end{enumerate}
\end{remarks}

\begin{definition}
	\label{def:single picture conventions}
	A handle decomposition satisfies the \textbf{single-picture conventions} iff:
	\begin{description}
		\item [2-1]
			Inside the remaining region of any 1-handle $\remainingboundary h_1 = D^1 \times S^2 = [-1,1] \times S^2$,
			images of attaching maps of 2-handles
			are of the form $[-1,1] \times \{p_1, \dots, p_N\}$ with $p_i \in S^2$.
		\item [3-1]
			Inside the remaining region of any 1-handle $\remainingboundary h_1 = D^1 \times S^2 = [-1,1] \times S^2$,
			images of attaching maps of 3-handles
			are of the form $[-1,1] \times A$ with $A \subset S^2$
			a compact 1-dimensional submanifold.
		\item [3-2]
			Inside the remaining region of any 2-handle $\remainingboundary h_2 = D^2 \times S^1$,
			images of attaching maps of 3-handles are of the form $D^2 \times \{p_1, p_2, \dots p_N\}$,
			where $p_i \in S^1$.
	\end{description}
\end{definition}

\begin{remark}
	\label{rem:single picture}
	These two conditions have intuitive geometrical interpretations,
	on which we will expand:
	\begin{description}
		\item [2-1]
			A 2-handle entering one attaching ball $D^3_+$ of a 1-handle at a set of points
			must leave the corresponding ball $D^3_-$ on the mirror positions.
			This is a standard assumption which is usually made in text books.
		\item [3-1]
			Analogously, a 3-handle entering one attaching ball $D^3_+$ of a 1-handle at a submanifold $A$
			must leave the corresponding ball $D^3_-$ on the reflection of $A$.
		\item [3-2]
			A 3-handle enters a 2-handle along $S^1 \times \{p_1, p_2, \dots p_N\} \subset S^1 \times S^1$,
			where each $S^1 \times \{p_i\}$ follows the framing of the 2-handle attachment.
			To the author's knowledge, this convention is unmentioned in the literature,
			although it is straightforward.
	\end{description}
\end{remark}
\fxwarning{It would be good to have an example of a 3-handle attached to a 1-framed 2-handle.}

\begin{definition}
	\label{def:kirby diagram}
	A regular handle attachment satisfying the single-picture conventions is called a \textbf{Kirby diagram}.
\end{definition}

\begin{figure}
	\centering
	\begin{tikzpicture}
		\pic           { axes             = {$\remainingboundary h_0$} };
		\pic [ showisotopyreverse ]
		     at (0, 2) { straight12handle = {1-handle-shading} };

		\begin{scope} [ xshift =  7cm ]
		\pic           { axes             = {$\remainingboundary h_1$} };
		\pic [ showisotopy ]
		               { knotted12handle  = {draw, dashed, fill = gray!30!white} };
		\end{scope}

		\begin{scope} [ yshift = -6cm ]
		\draw [->] (-4.5, 2) -- node [ above ] {Isotopy} +(1.5, 0);

		\pic           { axes             = {$\remainingboundary h_0$} };
		\pic [ xscale = -1, transform shape ]
		     at (0, 2) { knotted12handle  = {1-handle-shading} };

		\begin{scope} [ xshift =  7cm ]
		\pic           { axes             = {$\remainingboundary h_1$} };
		\pic [ drawadditionalconventionalintervals ]
		               { straight12handle = {forbiddenregion} };
		\end{scope}
		\end{scope}
	\end{tikzpicture}
	\caption{Pushing a 2-handle attachment outside the 1-handle remaining region.
		After the isotopy,
		the 2-handle attaching sphere runs along a canonical interval inside of $\remainingboundary h_2$.
		(Other canonical intervals are drawn dashed.)
		It is then easy to see that the 2-handle cancels the 1-handle,
		since it intersects the belt sphere
		(the $y-z$-plane compactified at $\infty$)
		transversely in one point.
	}
	\label{fig:pushing 1-handle}
	\fxwarning[layout={inline}]{The coordinate system $z$-angle is different in different figures. Unify?}
	\fxwarning[layout={inline}]{Put ``Isotopy'' label centered? Label $D^3$s?}
\end{figure}
It is natural to ask whether essentially every handle decomposition can be drawn as a Kirby diagram.
This is ensured by the following lemma.
\begin{lemma}
	\label{lem:establish Kirby conventions}
	Given any handle decomposition,
	a succession of isotopies of the individual handle attachments can be applied
	to arrive at a Kirby diagram.
\end{lemma}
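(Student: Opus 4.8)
The plan is to produce the isotopies in two stages: first make the handle decomposition \emph{regular} (Definition~\ref{def:regular handle decomposition}), then bring the attaching maps into the normal forms of the single-picture conventions (Definition~\ref{def:single picture conventions}), so that Definition~\ref{def:kirby diagram} applies. Two tools recur: general position --- transversality of attaching spheres against belt spheres and against boundaries of attaching regions --- together with the isotopy extension theorem; and the observation that a component of the intersection of an attaching sphere with a remaining region $\remainingboundary h_k$ that is \emph{inessential} (closed, or with all of its boundary on a single ``end'' of $\remainingboundary h_k$) can be pushed across the collar of that remaining region into the remaining regions of handles of strictly smaller index, and so is deleted. Since such pushes move material only downwards in the handle filtration and isotopies of attaching maps do not change the handlebody up to diffeomorphism, this is a well-founded simplification scheme.

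\emph{Regularity.} Argue by increasing handle index. When a $k$-handle is attached ($1\le k\le 3$), its attaching sphere $S^{k-1}$ and the belt spheres $S^{3-k}$ of the other $k$-handles present lie in the same $3$-dimensional boundary with $(k-1)+(3-k)=2<3$, so a general-position isotopy makes the attaching sphere miss all those belt spheres. The complement of a belt sphere in a remaining region $D^{k}\times S^{3-k}$ deformation retracts onto its inner boundary $S^{k-1}\times S^{3-k}\subset\partial M_{k-1}$, so an ambient isotopy realising this retraction pushes the whole attaching region of the new $k$-handle off the remaining regions of the other $k$-handles and into $\partial M_{k-1}$. After doing this at every level, each $k$-handle is attached only onto remaining regions of $j$-handles with $j<k$, which by the remark after Definition~\ref{def:regular handle decomposition} is condition~1 there. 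For condition~2, the attaching regions now form a compact subset of $\remainingboundary h_0\cong\R^3\cup\{\infty\}$; shrinking the $D^{4-k}$-factor of each $2$- and $3$-handle attaching region makes it a proper closed subset of $S^3$, and a last ambient isotopy slides it off $\infty$.

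\emph{Single-picture conventions.} Normalise the remaining regions in order of \emph{decreasing} index, so that no later step undoes an earlier one (by regularity a $3$-handle meets only $\remainingboundary h_0,\remainingboundary h_1,\remainingboundary h_2$, a $2$-handle only $\remainingboundary h_0,\remainingboundary h_1$, a $1$-handle only $\remainingboundary h_0$, and pushing material out of $\remainingboundary h_k$ deposits it in some $\remainingboundary h_j$ with $j<k$). At each $2$-handle and its solid torus $\remainingboundary h_2\cong D^2\times S^1$: make each $3$-handle attaching $2$-sphere transverse to $\partial\remainingboundary h_2$, delete the inessential components of the intersection surface, and isotope the rest until the projection $\remainingboundary h_2\to D^2$ restricts to a submersion on it; it is then a trivial covering of $D^2$, i.e.\ a union of meridian disks, which straighten to the form $D^2\times\{p_1,\dots,p_N\}$ --- automatically following the $2$-handle's framing with respect to the product structure on $\remainingboundary h_2$ --- which is condition~\textbf{3-2}. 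Then at each $1$-handle and $\remainingboundary h_1\cong[-1,1]\times S^2$: for $2$-handle attaching curves, push the critical points of the projection $\remainingboundary h_1\to[-1,1]$ out through the ends so the intersection is a union of arcs from $\{-1\}\times S^2$ to $\{+1\}\times S^2$, then comb these into the form $[-1,1]\times\{p_1,\dots,p_N\}$ (condition~\textbf{2-1}); for $3$-handle attaching $2$-spheres, the two-dimensional version of the same procedure yields the form $[-1,1]\times A$ with $A\subset S^2$ a closed $1$-manifold (condition~\textbf{3-1}). A regular decomposition satisfying all three conditions is a Kirby diagram by Definition~\ref{def:kirby diagram}.

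\emph{Main obstacle.} The crux is the surface normalisation behind conditions~\textbf{3-1} and~\textbf{3-2}: after deleting closed and one-ended components, one must show that a properly embedded surface in $[-1,1]\times S^2$ (respectively in $D^2\times S^1$) can be reduced, by isotopy and by compressions and boundary-compressions that create only inessential pieces, to a disjoint union of vertical annuli $[-1,1]\times S^1$ (respectively of meridian disks) --- equivalently, that the fold singularities of the relevant projection can be removed from the remaining region one at a time while the surface stays embedded. This is the genuinely $3$-manifold-topological step, resting on the classification of essential surfaces in $[-1,1]\times S^2$ and in solid tori and on realising each move by an isotopy of the $3$-handle's attaching map. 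The remaining care is bookkeeping: the isotopies normalising $\remainingboundary h_1$ must have support in collars of the leftover $\remainingboundary h_1$ disjoint from $\remainingboundary h_2$, and every push must ultimately deposit its material in $\remainingboundary h_0$ (where no condition is imposed) rather than cycling it between $\remainingboundary h_1$ and $\remainingboundary h_2$; the decreasing-index ordering together with a complexity that strictly drops at each step makes this work.
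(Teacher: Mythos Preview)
Your two-stage plan (regularity first, then the single-picture conventions processed in decreasing handle index) is sound, and your treatment of regularity matches the paper's remark that this part is ``easy''. But for the single-picture conventions you are working much harder than necessary, and the ``main obstacle'' you flag is one the paper simply sidesteps.

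The paper's argument for each of \textbf{2-1}, \textbf{3-1}, \textbf{3-2} is a one-move \emph{transverse-to-the-core-then-stretch} trick. For \textbf{3-2}: isotope the $3$-handle sphere so that it meets the core circle $\{0\}\times S^1 \subset D^2\times S^1 = \remainingboundary h_2$ transversely, in finitely many points $p_1,\dots,p_N$; by transversality there is a thin tube $D^2_\varepsilon\times S^1$ in which the intersection already consists of the disks $D^2_\varepsilon\times\{p_i\}$; now the radial isotopy that expands $D^2_\varepsilon$ to $D^2$ pushes everything sitting in the annular shell out through $\partial\remainingboundary h_2$ into the lower-index canvas. For \textbf{2-1} and \textbf{3-1} one does the same with the central slice $\{0\}\times S^2 \subset [-1,1]\times S^2$ (this is the content of Figure~\ref{fig:pushing 1-handle}). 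No classification of essential surfaces, no compressions, no fold-by-fold elimination is required: the stretching isotopy never inspects the topology of $\Sigma$ away from the core.

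Your route via compressions and essential-surface classification has a genuine gap as written. A compression of $\Sigma = S^2\cap\remainingboundary h_2$ inside the solid torus is a surgery on $\Sigma$, not an isotopy of the embedded attaching $S^2$; to realise it as an isotopy you would need the $2$-sphere formed by the compressing disk $D$ together with the cap $D'\subset S^2$ to bound a $3$-ball in $\partial M_2$ --- but $\partial M_2$ is typically a connected sum of copies of $S^1\times S^2$ and hence not irreducible, so this can fail. Separately, your ``inessential'' criterion (closed, or all boundary on a single end) does not discriminate anything in the $\remainingboundary h_2$ case, since the solid torus has only one boundary component. Both issues evaporate under the paper's stretching argument.
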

\begin{proof}
	When disregarding 3-handles,
	this is a standard fact.
	See Figure \ref{fig:pushing 1-handle} for an illustration.
	The full proof, covering the 3-handle case,
	is given in Appendix \ref{sec:details on diagrams}.
\end{proof}

\subsubsection{Kirby diagrams and 3-handles}

Usually, 3-handles are not depicted in Kirby diagrams, due to the following theorem:
\begin{theorem}[{Well-known, e.g. \cite[Section 4.4]{GompfStipsicz}}]
	\label{thm:need only 2-handlebody}
	Let $M$ and $N$ be smooth, closed, oriented 4-manifolds,
	with handle decompositions containing precisely one 0-handle and one 4-handle.
	Assume that their 2-handlebodies agree.
	Then $M \cong N$.
\end{theorem}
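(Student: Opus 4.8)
The plan is to reconstruct the "missing" handles of dimension $\geq 3$ from the 2-handlebody and the combinatorial fact that there is a unique 0-handle and a unique 4-handle. Write $M = M_2 \cup (\text{3-handles}) \cup (\text{4-handle})$ and similarly for $N$, where $M_2$ denotes the (common) 2-handlebody. The key observation is that a single 4-handle attached on top of the 3-handles is attached along an embedding $S^3 \hookrightarrow \partial(M_2 \cup \text{3-handles})$, and since the 4-handle fills in all of the remaining boundary, we must have $\partial(M_2 \cup \text{3-handles}) \cong S^3$. So the 3-handles of $M$ are exactly a collection of 3-handles attached to $\partial M_2$ whose effect is to turn $\partial M_2$ into $S^3$; the same is true for $N$. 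Thus it suffices to show that this collection of 3-handles is essentially unique, i.e. determined up to the handle moves of Theorem \ref{thm:handle moves} (together with the final 4-handle cancellation).

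The cleanest way to organise this is to turn the decomposition upside down. Dualising a handle decomposition of a closed 4-manifold replaces each $k$-handle by a $(4-k)$-handle, reversing the order of attachment; under this operation the unique 4-handle of $M$ becomes a unique 0-handle, the 3-handles become 1-handles, and $M_2$ gets turned into the union of the dual 2-, 3- and 4-handles. So, dually, the statement becomes: a 1-handlebody (one 0-handle, some 1-handles) whose boundary is $S^3$, i.e. which is $\natural^m (S^1 \times D^3)$ — no, more precisely: we are attaching 1-handles to a 0-handle so that what remains to be glued is exactly the dual of $M_2$. The honest formulation is: the dual 1-handles are determined by requiring that the rest of the dual handle decomposition (which is prescribed, being the dual of the fixed $M_2$) can be glued on. First I would make precise that the 3-handlebody $M_2 \cup (\text{3-handles})$ of a closed 4-manifold with one 4-handle is determined up to diffeomorphism rel $M_2$, because it is the unique way to cap $\partial M_2$ off to get something bounding via a single 4-handle — and this follows from the fact that any two diffeomorphisms $S^3 \to S^3$ are isotopic, so the 4-handle attachment is unique, hence the 3-handles, being the complement in the dual picture of a 0-handle and its 1-handles inside the fixed $M_2^{\text{dual}}$, are forced.

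Concretely, the key steps, in order: (1) Using Theorem \ref{thm:One 0-handle and one n-handle}-style reasoning, arrange both $M$ and $N$ to have exactly one 0-handle and one 4-handle, with agreeing 2-handlebodies $M_2$; (2) dualise, so that $M^{\mathrm{dual}}$ is built from a 0-handle $h_0'$, dual 1-handles (the old 3-handles), and the fixed piece $M_2^{\mathrm{dual}}$ (dual 2-, 3-, 4-handles) glued on at the end; (3) observe that $h_0' \cup (\text{dual 1-handles})$ is a 4-manifold whose remaining boundary must be diffeomorphic to the attaching region along which $M_2^{\mathrm{dual}}$ is glued — a fixed 3-manifold $Y = \partial M_2$ with a prescribed identification up to isotopy; (4) invoke that a 1-handlebody on one 0-handle is determined up to the moves of Theorem \ref{thm:handle moves} by the diffeomorphism type of its boundary together with the requirement that $M_2^{\mathrm{dual}}$ glue on, using that $\pi_0 \mathrm{Diff}(S^3)$ and the relevant mapping class data are trivial enough (this is where Cerf theory / the fact that handle decompositions of $S^3 \times I$ are standard enters); (5) translate the resulting equivalence of dual decompositions back through dualisation to conclude $M \cong N$.

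The main obstacle is step (4): controlling the 1-handles in the dual picture. Adding 1-handles to a 0-handle and then a fixed collection of higher handles is not literally unique — one can always introduce cancelling 1-2 pairs — so the statement is really that any two such completions are related by handle slides and cancellations, which is exactly the content of Cerf's theorem (uniqueness of handle decompositions up to the moves) specialised to the situation where the 2-handlebody is held fixed. Making this rigorous requires showing that the isotopy extension / uniqueness of the $S^3$ capping propagates down through the 1-handles without disturbing $M_2$, i.e. that the moves needed can be taken to be supported away from (the dual of) $M_2$. I would handle this by appealing to \cite{GompfStipsicz} for the precise statement that two handle decompositions of a 4-manifold that agree outside a regular sub-handlebody differ by moves supported in the complement, and citing that as the crux rather than reproving Cerf theory; the rest of the argument is then the bookkeeping of dualisation described above.
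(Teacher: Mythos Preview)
The paper does not actually give a proof of this theorem; it is stated as well-known and the reader is referred to \cite[Section 4.4]{GompfStipsicz}. So there is no ``paper's own proof'' to compare against, only the standard argument found in that reference.

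Your outline is in the right direction --- turn the 3- and 4-handles upside down and argue that the way $M_2$ is capped off is essentially unique --- but the crux of the argument is missing. After dualising, the 4-handle and 3-handles of $M$ (and likewise of $N$) form a copy of $\natural^k (S^1 \times D^3)$ glued to $M_2$ along its boundary $\partial M_2 \cong \#^k (S^1 \times S^2)$. What makes the result independent of the gluing is the theorem of Laudenbach and Po\'enaru that every self-diffeomorphism of $\#^k (S^1 \times S^2)$ extends over $\natural^k (S^1 \times D^3)$. This is a genuinely nontrivial input and is exactly what Gompf--Stipsicz invoke at this point.

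Your step (4) tries to replace this with Cerf theory or with the general statement that handle decompositions are related by moves, but that is circular: Cerf's theorem compares two handle decompositions of the \emph{same} manifold, whereas here we do not yet know that $M$ and $N$ are diffeomorphic. Likewise, the statement you propose to cite --- that decompositions agreeing outside a sub-handlebody differ by moves supported in the complement --- again presupposes we are inside a single ambient manifold. Without Laudenbach--Po\'enaru (or an equivalent result controlling the mapping class group of $\#^k(S^1\times S^2)$ relative to the handlebody it bounds), the argument does not close.
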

In essence, the 3-handle attachments contain no information in a handle decomposition of a closed manifold.
4-dimensional differential topology usually concerns itself with closed manifolds,
and 3-handles are typically excluded from most discussions.
Standard references for Kirby diagrams such as \cite{GompfStipsicz} and \cite{Akbulut:4manifolds} emphasize 1-handle and 2-handle attachments on a single 0-handle.
\fxwarning{This point needs to be broadened by a definition,
	a picture or maybe both.}

While it is true that the 2-handles contain the main complexity of a handle decomposition,
3-handles will still turn out to contain useful information for the computation of the invariant defined in Section \ref{sec:invariant}.

As a further reason whose ramifications are beyond the scope of this article,
Theorem \ref{thm:need only 2-handlebody} does not hold for manifolds with nonempty boundary.
When our aim is to describe the whole TQFT directly with handle decompositions
(without taking the detour over the state sum model),
we will eventually have to regard bordisms,
and there are indeed nondiffeomorphic manifolds with boundary that have handle decompositions with the same 2-handlebody\footnote{%
	The author is grateful to Marco Golla and Andrew Lobb to point this fact out here:
	\href{https://mathoverflow.net/questions/288246/are-there-kirby-diagrams-with-3-handles}{https://mathoverflow.net/questions/288246/are-there-kirby-diagrams-with-3-handles}}.
Thus, 3-handles may indeed contain relevant information in this situation.
This viewpoint is discussed briefly in Section \ref{sec:TQFT directly}.

\fxnote{
	Don't need to review their role in CY because there we really only attach data to the 2-handlebody,
	even in the boundary case.
}

\subsection{Handle moves}
\label{sec:handle moves}

Handle decompositions are by far not unique.
Fortunately, Theorem \ref{thm:handle moves} ensures that every two finite handle decompositions of the same closed manifold are related by a finite sequence of handle moves.
There are three kinds of moves to consider:
Handle cancellations,
which are described below in Section \ref{sec:cancellations},
and (ambient) isotopies of handle attachments
as well as reorderings of attachments of the same index,
both of which are described subsequently in Section \ref{sec:slides}.

From here on, we will not draw the coordinate grids of the remaining regions anymore,
and assume that all diagrams take place in $\remainingboundary h_0$
(unless specified otherwise).

\fxnote{Can't cite Juhasz easily because he doesn't do it self-indexed.}

\subsubsection{Cancellations}
\label{sec:cancellations}

In our Kirby diagrams,
cancellable pairs (Definition \ref{def:cancellable})
of a $k$- and a $(k+1)$-handle are relevant for $k \in \{1, 2\}$.
(We have described 0-1-cancellation already
in Figure \ref{fig:0-1-cancellation} in Section \ref{sec:canvases},
and assume that there is a single 4-handle.)

A 2-handle cancels a 1-handle if the attaching circle attaches in exactly one point for each 3-ball of the 1-handle.
This can be seen in Figure \ref{fig:pushing 1-handle},
where it was argued that any such attachment can be isotoped to one that intersects the belt sphere in one point.
If further handles are attached to the handle pair,
they can be slid off first before cancelling \cite[Figure 5.13]{GompfStipsicz}.

A 3-handle cancels a 2-handle if the 2-handle is attached along an unframed unknot,
and (the visible part of the) 3-handle attachment forms a a disk which is bounded by said unknot.
(The second half of the attaching $S^2$ is inside the remaining region of the 2-handle,
 where it intersects the belt sphere in one point.)
The situation can be seen in Figure \ref{fig:2-3-cancellation}.
\begin{figure}
	\centering
	\begin{tikzpicture}[ baseline ]
		\draw [ thick, 3-handle fill ] circle [ radius = 1cm ];
	\end{tikzpicture}
	$\qquad \xmapsto{\text{2-3-cancellation}} \qquad \emptyset$
	\caption{The visible part of the 2-3-handle cancellation.
		The 3-handle attaching sphere is split into two disks,
		one of which vanishes inside the 2-handle remaining region.
	}
	\label{fig:2-3-cancellation}
	\fxwarning[layout={inline}]{Sthg. about orientations?
		Is the picture clear enough?
		Explanation?}
\end{figure}
\fxwarning{Reference to paragraph or so when that part is better sorted}
As for the 1-2-cancellation,
it is sometimes necessary to first slide off other handles before cancelling.

\fxwarning{Possibly illustrate all this and other points in a big example with all relevant moves?}
\fxnote{What about 3-4?
	Is it allowed to attach cancelling 3-handles?
	No. Single 4-handle.
	How to see whether a 3-handle is cancellable?
	Doesn't matter.
	(I think it's a contractible $S^2$).
}

\subsubsection{Slides}
\label{sec:slides}

Applying an isotopy to a constituent of a Kirby diagram (tautologically) applies an isotopy to the corresponding handle attachment,
but there may be isotopies of handle attachments that do not conform to the Kirby conventions in the intermediate stages of the isotopy.
Indeed, isotopies of handle attachment between two non-isotopic Kirby diagrams exist.
They are commonly called ``handle slides'',
and they can be decomposed into elementary $j$-$k$-slides where $j \geq k$.
A $j$-$k$-slide is an isotopy of a $j$-handle over the remaining region of a $k$-handle that changes the Kirby diagram.

If an $l$-handle is attached to a $j$-handle,
the $j$-$k$-slide can still be performed simultaneously with an $l$-$k$-slide.

In order to allow for arbitrary $k$-$k$-slides,
the attachment order of $k$-handle attachments has to be changed sometimes.
Subtly, Kirby diagrams do not specify the order
since the attachments always commute due to the first regularity condition
(Definition \ref{def:regular handle decomposition}),
and thus resulting manifolds for different orders are diffeomorphic.
When isotoping a $k$-handle $h_k$ over another,
the latter has to be attached first,
thus a reordering may be necessary.

\fxnote{
	Isn't there a high-brow way doing this involving relative homology or so?
	No, those are knots.
}

$j$-$k$-slides for $j \leq 2$ are treated at length in the literature,
see e.g. \cite[Section 5.1]{GompfStipsicz}.
For completeness, they are briefly described in the following,
enumerated as $j$-$k$:
\begin{figure}
	\centering
			\tikzmath{
				\width  = 2.7;
				\height = 2;
				\slidex = 0.5 * \width;
				\slidey = 1.5;
				\dangle = 20;
				\alphashift = 0.3;
			}
			\begin{tikzpicture} [ baseline ]
				\pgfdeclarelayer{threehandles}
				\pgfsetlayers{threehandles,main}
				\node [ 1-handle ] (alpha-) at (-\slidex, -\slidey) {$D^{3-}_\alpha$};
				\node [ 1-handle ] (beta-)  at ( \slidex, -\slidey) {$D^{3-}_\beta$};
				\begin{scope}[horizontally]
					\path [ 3-handle fill = roch, 3-handle extends = roch ]
						(-\width, -\height)
						rectangle
						( \width,  \height);
				\end{scope}
				\node [ 1-handle ] (alpha+) at (-\slidex,  \slidey) {$D^{3+}_\alpha$};
				\node [ 1-handle ] (beta+)  at ( \slidex,  \slidey) {$D^{3+}_\beta$};
				\draw [ thin, ->, shorten > = 1mm, shorten < = 2mm ]
					(alpha+) to [ out =  20, in = 100 ] (beta+)
				;
				\begin{pgfonlayer}{threehandles}
				\draw [ 3-handle strip ]
					  (alpha-.270)
						--
					++(0, -\slidey)
				;
				\draw [ 3-handle strip ]
					  (alpha+.90)
						--
					++(0,  \slidey)
				;
				\end{pgfonlayer}
			\end{tikzpicture}
			$\xmapsto{j-1}$
			\begin{tikzpicture} [ baseline ]
				\pgfsetlayers{threehandles,main}
				\node [ 1-handle ] (alpha+) at (-\slidex,  0.8 - \slidey) {$D^{3+}_\alpha$};
				\node [ 1-handle ] (alpha-) at (-\slidex, -\alphashift - \slidey) {$D^{3-}_\alpha$};
				\node [ 1-handle ] (beta-)  at ( \slidex, -\slidey) {$D^{3-}_\beta$};
				\begin{pgfonlayer}{threehandles}
				\draw [ 3-handle strip ]
					  (alpha-.270)
						--
					++(0, -\slidey + \alphashift)
				;
				\draw [ 3-handle strip ]
					(alpha+)
						to [ out =  90, in = 270 ]
					(beta-)
				;
				\end{pgfonlayer}
				\begin{scope}[horizontally]
					\path [ 3-handle fill = roch, 3-handle extends = roch ]
						(-\width, -\height)
						rectangle
						( \width,  \height);
				\end{scope}
				\node [ 1-handle ] (beta+)  at ( \slidex,  \slidey) {$D^{3+}_\beta$};
				\begin{pgfonlayer}{threehandles}
				\draw [ 3-handle strip ]
					($(beta+.90) +(-2*\slidex, \slidey)$)
						to [ out = 270, in =  90 ]
					(beta+)
				;
				\end{pgfonlayer}
			\end{tikzpicture}
			\caption{1-1-handle slide with simultaneous 2-1-slide and 3-1-slide.
				The red 3-handle attachment separates the two components $\attachingboundary h_1 \cong D^{3+} \sqcup D^{3-}$
				of the 1-handle attachments,
				but $D^{3+}_\alpha$ can still ``tunnel'' through the remaining region of $h_{1\beta}$.
			}
			\label{fig:k-1-slides}
			\fxwarning[layout={inline}]{Do I like the dashed boundary for "extends?"
				If no, try something else. (sparsely dotted?)
				If yes, use everywhere.
			}
\end{figure}
\begin{figure}
			\centering
			\newdimen\radius
			\newdimen\sradius
			\newdimen\redmargin
			\newdimen\greywidth
			\newdimen\twotwox
			\tikzmath{
				\twotwox         = 3cm;
				\endangle  = 90; 
				\radius    = 1.5cm;
				\sradius   = 0.5cm;
				\width     = 5;
				\height    = 2;
				\redmargin = 1cm;
				\greywidth = 0.2cm;
				\diagramwhitesep = 0.4pt; 
			}
			\begin{tikzpicture} [ baseline ]
				\fill [ gray ]
					(\twotwox-\greywidth, 0)
					rectangle
					(\twotwox+\greywidth, -\height);
				\begin{scope}[horizontally]
					\newcommand{\slidinghandlepath}{
						( canvas polar cs:
						  radius = \radius
						, angle  = -\endangle
						)
						arc
						[ start angle = -\endangle
						, end angle   =  \endangle
						, radius      =  \radius
						]
					}
					\draw [ thindiagram ] \slidinghandlepath;
					\newcommand{\slidoverhandlepath}{ (\twotwox, 0) circle [ radius = \sradius ] }
					\draw [ thindiagram ] \slidoverhandlepath;
					\path [ 3-handle fill ] \slidinghandlepath;
					\path [ 3-handle fill = roch ]
						\slidinghandlepath -- ++(0, \redmargin) -- ++(\width, 0) -- ++(0, -2*\redmargin-2*\radius) -- ++(-\width, 0) -- cycle
						\slidoverhandlepath;
					\node at (0.6, 0) {$\attachingboundary h_{3A}$};
					\node at (0.8*\width, \radius) {$\attachingboundary h_{3B}$};
					\node [ below right = -1mm ] at (-45:\radius) {$\attachingboundary h_{2a}$};
					\node [ below right = -1mm ] at (\twotwox + \sradius, 0) {$\attachingboundary h_{2b}$};
				\end{scope}
				\fill [ white ]
					(\twotwox-\greywidth-\diagramwhitesep, 0)
					rectangle
					(\twotwox+\greywidth+\diagramwhitesep, \height);
				\fill [ gray ]
					(\twotwox-\greywidth, 0)
					rectangle
					(\twotwox+\greywidth, \height);
			\end{tikzpicture}
			$\xmapsto{j\text{-2-slide}}$
			\begin{tikzpicture} [ baseline ]
				\fill [ gray ]
					(\twotwox-\greywidth, 0)
					rectangle
					(\twotwox+\greywidth, -\height);
				\begin{scope}[horizontally]
					\newcommand{\slidinghandlepath}{
						( canvas polar cs:
						  x radius = \radius + \twotwox
						, y radius = \radius
						, angle    = -\endangle
						)
						arc
						[ start angle = -\endangle
						, end angle   =  \endangle
						, x radius = \radius + \twotwox
						, y radius = \radius
						]
					}
					\draw [ thindiagram ] \slidinghandlepath;
					\newcommand{\slidoverhandlepath}{ (\twotwox, 0) circle [ radius = 0.5cm ] }
					\draw [ thindiagram ] \slidoverhandlepath;
					\path [ 3-handle fill, even odd rule ] \slidinghandlepath \slidoverhandlepath;
					\path [ 3-handle fill = roch ]
						\slidinghandlepath -- ++(0, \redmargin) -- ++(\width, 0) -- ++(0, -2*\redmargin-2*\radius) -- ++(-\width, 0) -- cycle;
					\node at (0.6, 0) {$\attachingboundary h_{3A}$};
					\node at (0.8*\width, \radius) {$\attachingboundary h_{3B}$};
					\node [ below right = -1mm ] at
						( canvas polar cs:
						  angle = -35
						, x radius = \radius + \twotwox
						, y radius = \radius
						) {$\attachingboundary h_{2a}$};
					\node [ left ] at (\twotwox - \sradius, 0) {$\attachingboundary h_{2b}$};
				\end{scope}
				\fill [ white ]
					(\twotwox-\greywidth-\diagramwhitesep, 0)
					rectangle
					(\twotwox+\greywidth+\diagramwhitesep, \height);
				\fill [ gray ]
					(\twotwox-\greywidth, 0)
					rectangle
					(\twotwox+\greywidth, \height);
			\end{tikzpicture}
			\\
			\caption{2-2-handle slide of $h_{2a}$ over $h_{2b}$,
				simultaneously with a 3-2-slide of $h_{3A}$ over $h_{2b}$.
				The grey area can contain arbitrary handles,
				and may share 3-handles with $h_{2b}$.
				Furthermore, $h_{2b}$ may be arbitrarily knotted,
				and linked to other 2-handles.
				$h_{2a}$ then follows those knots and links.
			}
	\label{fig:k-2-slides}
\end{figure}

\begin{description}
	\item[j-1] To slide any handle $h_j$ over a 1-handle $h_1$,
		choose a path from the attached $\attachingboundary h_j$ to $\attachingboundary h_1$ that does not intersect with any other attachments
		(in particular no 3-handle attachments)
		and move $h_j$'s attaching sphere along into $\remainingboundary h_1$,
		entering at one of the $S^2$s bounding an attaching ball of $h_1$.

		Since the two attaching balls $D^3_+$ and $D^3_-$ can be situated in two regions of the drawing canvas that are separated by a 3-handle attachment,
		it is possible to move attachments between separated regions by means of this move.
	\item[1-1] Here, regularity requires the attachment to move all the way through the remaining boundary,
			leaving the opposite attaching ball again.
	\item[2-1] To establish the Kirby conventions,
		the 2-handle attachment needs not move completely through $\remainingboundary h_1$,
		instead its ends can protrude from the attaching balls of $h_1$ at mirror positions.
		The situation is displayed in Figure \ref{fig:RP3}.
	\item[2-2]
		The pair of pants surface is defined as a two-dimensional disk $D^2_c$ with two open disks $D^{2,\circ}_a$ and $D^{2,\circ}_b$ removed,
		its boundary is thus $S^1_a \sqcup S^1_b \sqcup S^1_c$.
		Given an embedding of the pair of pants in a Kirby diagram $K$,
		\fxwarning{It's actually in the remaining region of the handle decomposition,
		i.e. the 2-handles can be attached to other stuff as well}
		its boundary embedding defines three 2-handle attachments $h_{2,a}$, $h_{2,b}$ and $h_{2,c}$.
		The 2-2-slide transforms $K$ together with the attachments $h_{2,a}$ and $h_{2,b}$ into $K$ with $h_{2,c}$ and $h_{2,b}$,
		and one says that $h_{2,a}$ has been slid over $h_{2,b}$.

		For any two 2-handles, it is always possible to slide one over the other.
		(If they are in different regions separated by 3-handles,
		 it is still possible to perform the 2-2-slide after a series of 2-1-slides,
		 assuming the manifold is closed.)
		The resulting handle $h_{2,c}$ may depend on the choice of the pair of pants,
		but the resulting manifold does not
		(up to diffeomorphism).
\end{description}
Figures \ref{fig:k-1-slides} and \ref{fig:k-2-slides} display these slides,
but also include analogous 3-$k$-slides,
which are described in the following:
\begin{figure}
	\centering
		\newdimen\radius
		\newdimen\distance
		\tikzmath{
			\myangle  = 55;
			\radius   = 0.25cm;
			\distance = 3cm;
			\y        = 1;
			\slidy    = 5.5;
		}
		\begin{tikzpicture}
			\fill [ 3-handle fill ]
				(0, 0)
				-- (\myangle:\distance)
				-- ++(\y, 0)
				arc
					[ radius = \radius
					, start angle = 270
					, end angle   = 270 + \myangle
					]
				-- ++(180+\myangle:\distance)
				arc
					[ radius      = \radius
					, start angle = 270 + \myangle
					, end angle   = 270
					]
				-- (0, 0)
			;
			\fill [ 3-handle fill ]
				(0, 2 * \radius)
				-- ++(\myangle:\distance)
				-- ++(\y, 0)
				arc
					[ radius      = \radius
					, start angle = 90
					, end angle   = -\myangle
					]
				-- ++(180+\myangle:\distance)
				arc
					[ radius      = \radius
					, start angle = -\myangle
					, end angle   = 90
					]
				-- (0, 2 * \radius)
			;
			\draw [ 3-handle extends ]
				(0, 0)
				-- (\myangle:\distance)
				-- ++(\y, 0)
				arc
					[ radius      = \radius
					, start angle = -90
					, end angle   =  90
					]
				-- ++(-\y, 0)
				-- ++(180+\myangle:\distance)
				-- ++(\y, 0)
				arc
					[ radius      = \radius
					, start angle =  90
					, end angle   = -90
					]
				-- (0, 0)
			;
			\node at ($0.5*(\myangle:\distance)+0.5*(\y,0)+(\radius,\radius)$) {$\attachingboundary h_3$};

			\pic  at (5  , 1  ) {2-handle};
			\node at (5.6, 1.6) {$\attachingboundary h_2$};
		\end{tikzpicture}

		\vspace{1cm}
		$\xmapsto{\text{3-2-slide}}$
		\begin{tikzpicture} [ baseline = {(0, 2)} ]
			\coordinate (2handle) at (3.7, 1.2);
			\coordinate (lowerintersection) at ($(2handle)+(200:2cm and 1cm)$);
			\fill [ 3-handle fill, even odd rule ]
				(0, 0)
				-- (\myangle:\distance)
				-- ++(\slidy, 0)
				arc
					[ radius = \radius
					, start angle = 270
					, end angle   = 270 + \myangle
					]
				-- ++(180+\myangle:\distance)
				arc
					[ radius      = \radius
					, start angle = 270 + \myangle
					, end angle   = 270
					]
				-- (0, 0)
				(lowerintersection) arc
					[ x radius = 2cm
					, y radius = 1cm
					, start angle = 200
					, end angle   = -20
					]
				to [ out = 210, in = -30 ]
				(lowerintersection)
			;
			\fill [ 3-handle fill ]
				(0, 2 * \radius)
				-- ++(\myangle:\distance)
				-- ++(\slidy, 0)
				arc
					[ radius      = \radius
					, start angle = 90
					, end angle   = -\myangle
					]
				-- ++(180+\myangle:\distance)
				arc
					[ radius      = \radius
					, start angle = -\myangle
					, end angle   = 90
					]
				-- (0, 2 * \radius)
				($(2handle)+(0,0.1)$) ellipse
					[ x radius = 1.6
					, y radius = 0.65
					]
			;
			\draw [ 3-handle extends ]
				(0, 0)
				-- (\myangle:\distance)
				-- ++(\slidy, 0)
				arc
					[ radius      = \radius
					, start angle = -90
					, end angle   =  90
					]
				-- ++(-\slidy, 0)
				-- ++(180+\myangle:\distance)
				-- ++(\slidy, 0)
				arc
					[ radius      = \radius
					, start angle =  90
					, end angle   = -90
					]
				-- (0, 0)
			;

			\pic at (2handle) {2-handle};
		\end{tikzpicture}
		\\
		\fxwarning[layout={inline}]{
			Explain dashed convention as ``this may extend further''
			and use consistently elsewhere?
			The pancake analogy would want the $S^1$s where the 3-handle attaches to be marked (dotted?) as well.
			Labels on the second picture as well? But gets crammed.
			Update labels so they conform with the explicit model?
		}
	\caption{
		The 3-2-slide.
		Graphically, we can imagine the fold arc of the 3-handle attachment
		slides over the 2-handle.
		The 3-handle is understood to be extended past the dashed lines,
		to form a whole sphere.
	}
	\label{fig:3-2-slide}
\end{figure}
\begin{figure}
	\centering
		\begin{tikzpicture} [ baseline ]
			\path [ 3-handle curved ]             circle [ radius = 1 cm ];
			\node {$h_{3A}$};
			\path [ 3-handle curved roch ] (3, 0) circle [ radius = 1 cm ];
			\node at (3, 0) {$h_{3B}$};
		\end{tikzpicture}
		\quad
		$\xmapsto{\text{3-3-slide}}$
		\quad
		\begin{tikzpicture} [ baseline ]
			\path [ 3-handle curved roch ] (1,0) circle [ radius = 1 cm ];
			\node at ( 1, 0) {$h_{3B}$};
			\path [ 3-handle curved ] ellipse [ x radius = 2.5 cm, y radius = 1.5 cm ];
			\node at (-1, 0) {$h_{3C}$};
		\end{tikzpicture}
	\caption{In the 3-3-slide, one 3-handle attachment can slide over the other.}
	\label{fig:3-3-slide}
	\fxwarning{It's a bit confusing that here the slid handle is called $h_{3C}$ and not $h_{3A}$ anymore, but in the 2-2-slide they're called the same}
\end{figure}
\begin{description}
	\item[3-1]
		As in any $j$-1-slide, a 3-handle attachment can be isotoped through the remaining region of a 1-handle.
		And as in the 2-1-slide, the attachment can protrude from the attaching balls of the 1-handle at mirror positions.

		For an explicit model of this situation,
		realise that the attaching sphere of the 3-handle can be split in three parts,
		consisting of two caps and a cylinder:
		\[S^2 \cong D^2_+ \cup_{S^1_+} \left( S^1 \times [+1,-1] \right) \cup_{S^1_-} D_{2,-}\]
		Choose an embedding $S^1 \subset \partial D^3_+$ on the boundary of one ball of the 1-handle attachment.
		This induces a mirror embedding on the other ball.
		The 3-handle can attached as follows:
		The caps $D^2_\pm$ of the attaching sphere are in the drawing canvas of the 0-handle,
		bounding $S^1$ on $D^3_+$ and its mirror circle on $D^3_-$, respectively.
		The cylinder $S^1 \times [+1, -1]$ vanishes inside the remaining region of the 1-handle,
		conforming to the Kirby conventions.
		\fxnote{Below more material of the longer explanation.}
	\item[3-2]
		It is helpful to visualise the 3-2-slide by realising the attaching sphere of $h_3$ as the ``double pancake'' $S^2 \cong D^2 \cup_{S^1} D^2$,
		which consists of two disks glued along their boundary.
		The gluing $S^1$ is then slid over the attaching $S^1$ of the 2-handle,
		as in the 2-2-move,
		dragging the disks along.
		This is illustrated in Figure \ref{fig:3-2-slide}.

		For an explicit model,
		assume that the 2-handle $h_2$ is attached to the handlebody $H$ along $\attachingboundary h_2 = S^1 \times D^2$.
		Choose an embedded interval $[-1, +1] \subset \partial D^2$.
		This defines a strip $S^1 \times [-1, +1]$ on the boundary of $\attachingboundary h_2$,
		and a thickened disk $D^2 \times [-1, +1]$ inside the remaining region $\remainingboundary h_2$.
		The 3-handle now isotopes from the old attaching sphere $S^2_\mathrm{o}$ to the new attaching sphere $S^2_\mathrm{n}$,
		and we seek the image of this isotopy in the drawing canvas,
		which is a ball with a thickened disk and a ball removed:
		\[M \coloneqq (D^3_\mathrm{n} \backslash D^2 \times [-1, +1]) \backslash D^3_\mathrm{o}\]
		The $D^2 \times [-1,+1]$ is understood to be embedded such that the end disks $D^2 \times \{-1, +1\}$ are embedded in the boundary of $D^3_n$,
		one on each ``pancake''.
		$M$ has as boundary two components,
		$(S^2_\mathrm{n} \backslash (D^2_+ \sqcup D^2_-)) \cup S^1 \times [-1, +1]$,
		and $S^2_\mathrm{o}$.
		Assuming that the 3-handle is initially attached along $S^2_o$,
		it can be isotoped such that it is partially in the remaining region of the 2-handle,
		and the part staying visible in the main drawing canvas is $S^2_n \backslash (D^2_+ \sqcup D^2_-)$.
		It vanishes into $\remainingboundary h_2$ at $S^1$ times the endpoints of $[-1, +1]$.
	\item[3-3] One attaching $S^2$ of a 3-handle can ``engulf'' another one.
		In analogy to the 2-2-slide,
		the three-dimensional pair of pants is defined as a ball $D^3_C$ with two open balls $D^{2,\circ}_A$ and $D^{2,\circ}_B$ removed.
		Its boundary is $S^2_A \sqcup S^2_B \sqcup S^2_C$.
		Again, an embedding of the three-dimensional pair of pants in the boundary of a handle body defines three 3-handle attachments $h_{3,A}$, $h_{3,B}$ and $h_{3,C}$.
		The handle $h_{3,A}$ can be slid over $h_{3,B}$,
		which replaces it by $h_{3,C}$.
		This is illustrated in Figure \ref{fig:3-3-slide}.
	\item[3-$\infty$]
		Regularity requires handle attachments not to intersect with the point at infinity of $\remainingboundary h_0 = S^3 \cong \R^3 \cup \{\infty\}$.
		While any isotopy of 1-handles and 2-handles can be deformed away from this point such that the attachment never intersects with it,
		this is not always possible for 3-handles.
		The fundamental move that isotopes the attaching $S^2$ over this point is called the 3-$\infty$-move.

		When a 3-handle is only attached to $h_0$,
		its attaching $S^2$ separates the drawing canvas into an inside region and an outside region.
		The 3-$\infty$-move interchanges the roles of the two regions,
		as illustrated in Figure \ref{fig:S1S3}.
\end{description}
\begin{remark}
	\label{rem:3-infty-move}
	The 3-$\infty$-move is technically not a handle slide,
	but shares enough similarity in order to list it with the other slides.
	It is helpful to visualise the move applied to a 3-handle $h_3$ as the following procedure:
	\begin{enumerate}
		\item Attach a new 3-handle $h_3^\infty$ such that its attaching $S^2$ surrounds the whole diagram,
			or equivalently the point at $\infty$.
		\item Slide $h_3$ over $h_3^\infty$.
		\item Remove $h_3^\infty$.
	\end{enumerate}
\end{remark}
\fxwarning{Want remark that there is an $n-1$-move in any dimension?}
\begin{remark}
	Often it is possible to perform slides along existing handles.
	For example, sliding a 1-handle $h_{1,\alpha}$ over $h_{1,\beta}$ requires a path from $\attachingboundary h_{1,\alpha}$ to $\attachingboundary h_{1,\beta}$.
	If there is a 2-handle $h_2$ attached to both these 1-handles,
	a segment of its attaching circle provides such a path,
	and the 1-1-slide can be performed along it,
	while simultaneously sliding $h_2$ over $h_{1,\beta}$ as well.

	Similarly, a 2-2-slide can sometimes be performed along a 3-handle attachment,
	while simultaneously sliding the 3-handle.
	This is illustrated in Figure \ref{fig:k-2-slides}.
\end{remark}
\subsection{Examples}

\fxwarning{As further examples,
	do we want illustrations of $\CP{2}$, $\opCP{2}$, or at least references?
	Although, they are mentioned in the calculations, that'll suffice.
	}

\subsubsection{$S^1 \times S^3$}

\begin{figure}
	\centering
	\begin{tikzpicture} [ baseline ]
		\node [ 1-handle ] at (-2,0) {+};
		\node [ 1-handle ]           {-};
		\path [ 3-handle curved ]
			circle [ radius = 1cm ];
	\end{tikzpicture}
	$\xleftrightarrow{3\text{-}\infty}$
	\begin{tikzpicture} [ baseline ]
		\node [ 1-handle ] at (2,0) {-};
		\node [ 1-handle ]          {+};
		\path [ 3-handle curved ]
			circle [ radius = 1cm ];
	\end{tikzpicture}
	\caption{
		Two Kirby diagrams arising from the same handle decomposition of $S^1 \times S^3$,
		related by the 3-$\infty$-move.
	}
	\fxwarning[layout={inline}]{Spacing}
	\fxnote[layout={inline}]{Do we want the intermediate step at infinity?}
	\label{fig:S1S3}
\end{figure}

There is a standard handle decomposition of $S^n$ as a single 0-handle and a single $n$-handle.
Since handle decompositions have products,
and $h_{k_1} \times h_{k_2} \cong h_{k_1 + k_2}$,
there is a decomposition of $S^1 \times S^3$ into $h_0 \cup h_1 \cup h_3 \cup h_4$.
Two possible Kirby diagrams of this handle body are shown in Figure \ref{fig:S1S3}.
They are related via the 3-$\infty$-move.
See also \cite[Figure 4.15]{GompfStipsicz}.

\subsubsection{$S^1 \times S^1 \times S^2$}

For 3-dimensional manifolds,
Heegard diagrams are a standard depiction of handle decompositions.
\cite[Example 4.6.8]{GompfStipsicz} explains how to construct a handle decomposition of $S^1 \times M^3$ from a Heegard diagram of $M^3$.
In \cite[Section 6.2.1]{BaerenzBarrett2016Dichromatic},
a Kirby diagram of $S^1 \times S^1 \times S^2$ is derived,
but 3-handles are suppressed.
Figure \ref{fig:S1S1S2} shows the same Kirby diagram with 3-handles.

The reader might be surprised about the apparent lack of symmetry between $D^{3+}_\alpha$ and $D^{3-}_\alpha$
(and similarly $D^{3+}_\beta$ and $D^{3-}_\beta$),
with the 3-handle spheres covering only one of the 1-handle balls.
(After all, each 1-handle corresponds to an $S^1$ factor,
and mirroring the diagram such that e.g. $D^{3\pm}_\alpha$ are preserved and the $D^{3\pm}_\beta$ are interchanged
results in orientation reversal of the $S^1$ corresponding to the 1-handle $h_{1\beta}$,
which is an isomorphism.)
This asymmetry stems from the second regularity condition in Definition \ref{def:regular handle decomposition},
which stipulates that no handle attachment intersect with $\infty$.
If we remove that restriction, we can slide a point on $\attachingboundary h_{2b}$ to $\infty$ and end up with a symmetric diagram like the one given in \cite[Figure 4.1a]{HarerKasKirby86HandlebodyComplexSurfaces},
where $h_{2b}$ is attached along the $y$-axis,
and $h_{3A}$ and $h_{3B}$ are attached along the $y-z$-plane and the $x-y$-plane, respectively.

\begin{anfxwarning}{An explanation in words how to construct it.
	Is it helpful at all?
	Maybe as caption in the figure?
}
\small
The standard decomposition of $S^1 \times S^2$ has a single 3-handle with its attaching sphere filling out all of the diagram
(north and south poles are both inside the 2-handle,
leaving the 2-handle at the polar circles
and entering the 1-handle at the equator).
When multiplying with the interval and creating $I \times M$,
this 3-handle becomes a 3-handle in the Kirby diagram.
When realising $S^1 = h_0 \cup h_1$,
the 2-handle of $M$ gives rise to a 3-handle which starts inside the remaining sphere of the 2-handle,
goes from the 2-handle to the additional 1-handle coming from the single 0-handle of $M$,
through the 1-handle,
and enters the 2-handle on the other side again,
closing off in the remaining sphere.
\end{anfxwarning}
\begin{figure}
	\centering
	\newdimen\littleradius
	\newdimen\encircradius
	\newdimen\intodist
	\tikzmath{
		\onehandlex   = 3;
		\onehandley   = 2;
		\littleradius = 1mm;
		\encircradius = 6mm;
		\gapangle     = 180 * \diagramwhitesep / (\encircradius * 3.14);
		\slantangle   = atan(\onehandley/\onehandlex);
		\intodist     = 2mm;
		\awayangle    = 30;
	}
	\begin{tikzpicture}
		[ into 1-handle/.style = {shorten > = -\intodist} ]
		\pgfdeclarelayer{debug}
		\pgfdeclarelayer{background}
		\pgfdeclarelayer{behindtwohandle}
		\pgfdeclarelayer{twohandle}
		\pgfdeclarelayer{beforetwohandle}
		\pgfdeclarelayer{middle}
		\pgfdeclarelayer{front}
		\pgfsetlayers{debug,background,behindtwohandle,twohandle,beforetwohandle,middle,main,front}

		\coordinate (h1-apos) at ( \onehandlex,            0);
		\coordinate (h1+apos) at (-\onehandlex,            0);
		\coordinate (h1-bpos) at (           0,  \onehandley);
		\coordinate (h1+bpos) at (           0, -\onehandley);
		\coordinate (twohandle) at ($0.5*(h1+apos) + 0.5*(h1-bpos)$);

		\begin{pgfonlayer}{debug}
		\end{pgfonlayer}
		\begin{pgfonlayer}{background}
			\node [ 1-handle ] (h1-b) at (h1-bpos) {$D^{3-}_\beta$};
		\end{pgfonlayer}
		\begin{pgfonlayer}{middle}
			\node [ 1-handle ] (h1-a) at (h1-apos) {$D^{3-}_\alpha$};
			\node [ 1-handle ] (h1+a) at (h1+apos) {$D^{3+}_\alpha$};
		\end{pgfonlayer}
		\begin{pgfonlayer}{front}
			\node [ 1-handle ] (h1+b) at (h1+bpos) {$D^{3+}_\beta$};
		\end{pgfonlayer}

		\draw [ thick, into 1-handle ] (h1-a) -- (h1-b);
		\begin{pgfonlayer}{beforetwohandle}
			\path [ 3-handle = roch ]
				($(twohandle)+(\slantangle+90:\encircradius)$)
				to [ out = \slantangle + 180, in = \slantangle ]
				($(h1+a.\slantangle)+(180+\slantangle:\intodist)+(90+\slantangle:\littleradius)$)
				arc
					[ radius      = \littleradius
					, start angle = \slantangle +  90
					, end angle   = \slantangle + 270
					]
				to [ out = \slantangle, in = \slantangle + 180 ]
				($(twohandle)+(\slantangle+270:\encircradius)$)
				arc
					[ radius      = \encircradius
					, start angle = \slantangle - 90
					, end angle   = \slantangle + 90
					]
			;
			\draw [ thick ]
				(h1+a) --
				($(twohandle) + (\slantangle:\encircradius-\diagramwhitesep)$);
		\end{pgfonlayer}
		\begin{pgfonlayer}{behindtwohandle}
			\path [ 3-handle ]
				($(twohandle)+(\slantangle+90:\encircradius)$)
				to [ out = \slantangle, in = \slantangle + 180 ]
				($(h1-b.\slantangle+180)+(\slantangle:\intodist)+(90+\slantangle:\littleradius)$)
				arc
					[ radius      = \littleradius
					, start angle = \slantangle +  90
					, end angle   = \slantangle + 270
					]
				to [ out = \slantangle + 180, in = \slantangle ]
				($(twohandle)+(\slantangle + 270:\encircradius)$)
				arc
					[ radius      = \encircradius
					, start angle = \slantangle + 270
					, end angle   = \slantangle +  90
					]
			;
			\draw [ thick, into 1-handle ]
				($(twohandle) + (\slantangle:\encircradius+\diagramwhitesep)$) --
				(h1-b);
		\end{pgfonlayer}

		\begin{pgfonlayer}{twohandle}
			\draw [ thick ]
				($(twohandle) + (-180+\slantangle+\gapangle:\encircradius)$)
				arc
					[ start angle = -180 + \slantangle + \gapangle
					, end angle   =  180 + \slantangle - \gapangle
					, radius      = \encircradius
					]
			;
		\end{pgfonlayer}

		\draw [ thick, into 1-handle ] (h1+b) --
			node [ below right ] {$\attachingboundary h_{2a}$} (h1-a);
		\draw [ thick, into 1-handle ] (h1+b) -- (h1+a);

		\begin{pgfonlayer}{behindtwohandle}
			\path [ 3-handle ]
				($(twohandle)+(\slantangle+90:\encircradius)$)
				to [ out = \slantangle, in = \slantangle + 180 ]
				($(h1-b.\slantangle+180)+(\slantangle:\intodist)+(90+\slantangle:\littleradius)$)
				arc
					[ radius      = \littleradius
					, start angle = \slantangle + 90
					, end angle   = \slantangle - 90
					]
				to [ out = \slantangle + 180, in = \slantangle ]
				($(twohandle)+(\slantangle + 270:\encircradius)$)
				arc
					[ radius      = \encircradius
					, start angle = \slantangle - 90
					, end angle   = \slantangle + 90
					]
			;
			\draw [ thin, <- ]
				(-0.35*\onehandlex, 0.88*\onehandley)
				--
				+(0, 1.5)
				node [ above ] {$\attachingboundary h_{3A}$}
			;
			\path [ 3-handle = roch ]
				($(twohandle)+(\slantangle+\awayangle+90:\encircradius)$)
				to [ out = \slantangle + \awayangle, in =  180 ]
				($(h1-b)+(0,1)$)
				to [ out =                              0, in = 180 - \slantangle ]
				($(h1-a.-\slantangle+180)+(-\slantangle:\intodist)+(90-\slantangle:\littleradius)$)
				arc
					[ radius      = \littleradius
					, start angle =  90 - \slantangle
					, end angle   = 270 - \slantangle
					]
				to [ out = 180 - \slantangle, in = \slantangle - \awayangle ]
				($(twohandle)+(\slantangle-\awayangle+270:\encircradius)$)
				arc
					[ radius      = \encircradius
					, start angle = \slantangle - \awayangle - 90
					, end angle   = \slantangle + \awayangle + 90
					]
		;
		\end{pgfonlayer}
		\begin{pgfonlayer}{beforetwohandle}
			\path [ 3-handle = roch ]
				($(twohandle)+(\slantangle+90:\encircradius)$)
				to [ out = \slantangle + 180, in = \slantangle ]
				($(h1+a.\slantangle)+(180+\slantangle:\intodist)+(90+\slantangle:\littleradius)$)
				arc
					[ radius      = \littleradius
					, start angle = \slantangle + 90
					, end angle   = \slantangle + 270
					]
				to [ out = \slantangle, in = \slantangle + 180 ]
				($(twohandle)+(\slantangle + 270:\encircradius)$)
				arc
					[ radius      = \encircradius
					, start angle = \slantangle - 90
					, end angle   = \slantangle + 90
					]
			;
			\draw [ thin, <- ]
				(-0.8*\onehandlex, 0.4*\onehandley)
				--
				+(135:1.5cm)
				node [ above left = -1mm ] {$\attachingboundary h_{3B}$}
			;
		\end{pgfonlayer}
		\path [ 3-handle ]
			($(twohandle)+(\slantangle-\awayangle+90:\encircradius)$)
			to [ out = \slantangle - \awayangle + 180, in =  90 ]
			($(h1+a)+(-1,0)$)
			to [ out =                            270, in = 180 - \slantangle ]
			($(h1+b.-\slantangle+180)+(-\slantangle:\intodist)+(270-\slantangle:\littleradius)$)
			arc
				[ radius      = \littleradius
				, start angle = 270 - \slantangle
				, end angle   =  90 - \slantangle
				]
			to [ out = 180 - \slantangle, in = \slantangle + \awayangle + 180 ]
			($(twohandle)+(\slantangle+\awayangle+270:\encircradius)$)
			arc
				[ radius      = \encircradius
				, start angle = \slantangle + \awayangle - 90
				, end angle   = \slantangle - \awayangle + 90
				]
		;

		\draw [ thin, <- ]
			($(twohandle)+(\slantangle+90:1.05*\encircradius)$)
			--
			+(\slantangle+90:1.5cm)
			node [ above left = -1mm ] {$\attachingboundary h_{2b}$};
		\node [ right ] at (h1+a.east)  {$\attachingboundary h_{3A}$};
		\node [ below ] at (h1-b.south) {$\attachingboundary h_{3B}$};
	\end{tikzpicture}
	\caption{Handle decomposition of $S^1 \times S^1 \times S^2$ with two 3-handles,
		two 2-handles and two 1-handles.
		Choosing an arbitrary orientation and start point,
		the attaching $S^1$ of $h_{2a}$ attaches to $h_{1\alpha}$ at $D^{3-}_\alpha$,
		runs through the remaining region $\remainingboundary h_{1\alpha}$,
		leaves at $D^{3+}_\alpha$,
		attaches to $h_{1\beta}$ at $D^{3+}_\beta$,
		runs through $\remainingboundary h_{1\beta}$,
		leaves at $D^{3-}_\beta$,
		attaching a second time to $h_{1\alpha}$ and $h_{1\beta}$ each,
		and finally closing the loop.
		The attachment of $h_{3A}$ starts (invisibly) inside the remaining region of $h_{2b}$,
		extends to a $D^2$ and leaves it at $\attachingboundary h_{2b}$,
		proceeding as a cylinder around $\attachingboundary h_{2a}$ and $D^{3+}_\alpha$,
		attaching to $h_{1\beta}$ at $D^{3+}_\beta$,
		leaving again at $D^{3-}_\beta$ and attaching a second time to $h_{2b}$,
		closing off inside its remaining region.
	}
	\label{fig:S1S1S2}
	\begin{anfxwarning}[layout={inline}]{Finish illustration}
		\begin{enumerate}
			\item Wonder whether moving one 3-handle,
				or the 2-handle together with the other 3-handle to $\infty$
				improves it.
			\item Tweak opacity (lower it)
		\end{enumerate}
	\end{anfxwarning}
\end{figure}

\subsubsection{Fundamental group}
\label{sec:fundamental group}

It is well-known that the 2-handlebody of a handle decomposition gives a presentation of the fundamental group
\cite[Solution of Exercise 4.6.4(b)]{GompfStipsicz}, \cite[Section 2.3.3]{BaerenzBarrett2016Dichromatic},
where each 1-handle is a generator and each 2-handle a relation.

For example, it is a good exercise to verify that in Figure \ref{fig:S1S1S2},
the two 1-handles $h_{1\alpha}$ and $h_{1\beta}$ constitute two generators $\alpha$ and $\beta$ each,
and the 2-handle $h_{2a}$ results in the relation $\alpha\beta\alpha^{-1}\beta^{-1}$
(after having oriented the attaching spheres arbitrarily).
$h_{2b}$ is not attached to any 1-handles and thus yields the trivial relation.
We have shown that $\pi_1 (S^1 \times S^1 \times S^2) \cong \Z \oplus \Z$,
as expected.

Turning a handle decomposition upside down \cite[Section 4.2]{GompfStipsicz}
shows that it is possible to present the fundamental group with the 3-handles as generators and again the 2-handles as relations.
Each 2-handle yields a relation where the generators corresponding to all attached 3-handles are multiplied in cyclical order.\footnote{%
To see this, visualise that for each 3-hande,
a noncontractible $S^1$ starts in the interior of the single 4-handle,
passes through the first of the two $D^3$s constituting the remaining region of the 3-handle,
and becomes visible as $\text{pt.} \times [-1, +1] \subset S^2 \times [-1, +1]$ in the thickened attaching sphere of the 3-handle,
and enters the second $D^3$ of the remaining region,
back into the 4-handle.
It can be contracted in the remaining region of a 2-handle if the 3-handle is attached to it.
}
A generator is inverted if the boundary orientation deriving from the (arbitrarily chosen) orientation of the attaching sphere $S^2$ of $h_3$
does not match the (again arbitrarily chosen) orientation of the attaching $S^1$ of $h_2$.

\begin{anfxnote}{Some more description of how to contract the $S^1$. Useful?}
\small
if we push the visible interval into the part of the attaching sphere onto the boundary 2-handle.
It can be visualised as $\R^3 \cup \infty$,
with the attaching region as an embedded $S^1 \times D^2$,
and the remaining region as a lot of $S^1$s forming Hopf links with the attaching regions.
The relation is then seen by pushing the rest of the formerly noncontractible $S^1$ into the remaining region of the 2-handle and making one of the Hopf-linking $S^1$s out of it.
\end{anfxnote}

Again, Figure \ref{fig:S1S1S2} offers a good exercise to compute the fundamental group in this presentation.
The two 3-handles yield generators $A$ and $B$.
None of the 3-handles is attached to $h_{2a}$,
so its relation is trivial.
Both 3-handles are attached twice to $h_{2b}$, though.
Choosing arbitrary orientations and taking care that the orientations on both parts of the attaching spheres of the 3-handles match,
we read off the relation $ABA^{-1}B^{-1}$ and arrive at the same group.

\section{Graphical calculus in $G$-crossed braided spherical fusion categories}
\label{sec:Graphical calculus}

Our strategy to define an invariant of 4-manifolds from a $G$-crossed braided spherical fusion category $\mathcal{C}$ (short: $G\times$-BSFC)
is to choose a Kirby diagram of the manifold,
and to interpret and evaluate this diagram in a graphical calculus of $\mathcal{C}$.
As will be shown in this section,
this calculus is conveniently similar to Kirby calculus with 3-handles.

\subsection{From manifolds to morphisms}
\fxerror{Consider moving after the diagrams section}
\begin{figure}
	\centering
	\newdimen\arrowdistance
	\setlength{\arrowdistance}{2mm}
	\begin{tikzpicture}
		[ down/.style = {->, transform canvas={xshift=-\arrowdistance}}
		, up/.style   = {<-, transform canvas={xshift= \arrowdistance}}
		]
		\node (manifold) {4-manifold};
		\node (handle) [below=of manifold] {Handle decomposition};
		\draw [down]
			(manifold)
			-- node [left] {Existence of Morse functions}
			(handle);
		\draw [up]
			(manifold)
			-- node [right, align=left] {Handle cancellations,\\regular isotopies of attaching maps}
			(handle);
		\node (kirby) [below=of handle] {Kirby diagram};
		\draw [down]
			(handle)
			-- node [left] {Lemma \ref{lem:establish Kirby conventions}}
			(kirby);
		\draw [up]
			(handle)
			-- node [right] {$n$-$k$ handle slides, $3$-$\infty$-move}
			(kirby);
		\node (diagram) [below=of kirby] {Unlabelled (regular) planar diagram};
		\draw [down]
			(kirby)
			-- node [left] {Definition \ref{def:kirby diagram to planar diagram}}
			(diagram);
		\draw [up]
			(kirby)
			-- node [right] {Remark \ref{rem:isotopies of kirby diagrams and planar diagrams}}
			(diagram);
		\node (labelled) [below=of diagram] {Labelled (regular) planar diagram};
		\draw [down]
			(diagram)
			-- node [left] {Definition \ref{def:labelled diagram}}
			(labelled);
		\draw [up]
			(diagram)
			-- node [right] {Proposition \ref{prop:diagram evaluation invariant under isotopy}}
			(labelled);
		\node (morphism) [below=of labelled] {Invariant morphism};
		\draw [down]
			(labelled)
			-- node [left] {Definition \ref{def:invariant}}
			(morphism);
		\draw [up]
			(labelled)
			-- node [right] {Theorem \ref{thm:Invariance}}
			(morphism);
	\end{tikzpicture}
	\caption{
		In order to define the invariant, a 4-manifold is translated to a morphism in a $G$-BSFC through various intermediate steps.
		For each of these steps, additional data has to be added, or additional properties assumed.
		The arrows pointing down illustrate this additional structure,
		while the arrows pointing up justify it through a proof of independence on the specific structure chosen.
	}
	\label{fig:manifolds to morphisms}
\end{figure}
\fxerror{Manifold -> Handle decomposition -> Kirby diagram -> Kirby diagram with infinity normalised -> Unlabelled diagram -> Unlabelled regular planar diagram -> Labelling and interpretation.
Missing some lemmas in the graphics.}

\fxerror{Consider making this its own section, move Def 4.1 and details on regularity here.
Expand on when two diagrams are equivalent (represent the same manifold/evaluate the same)}

Our general guiding principle to arrive at a graphical calculus is that $G\times$-BSFCs are special degenerate (weak) 3-categories,
and thus monoidal 2-categories \cite{Cui2016TQFTs}.
Viewing them as 3-categories,
there is a single object,
the 1-morphisms correspond to group elements,
the 2-morphisms correspond to objects of the fusion category,
and the 3-morphisms correspond to morphisms of the fusion category.
The diagrams should thus be 3-dimensional,
with group elements labelling 2-dimensional \textbf{sheets},
\fxwarning{Alternative convention: Call them ``surfaces'' or 2-strata.}
objects of the fusion category labelling 1-dimensional \textbf{ribbons},
and morphisms of the fusion category labelling \textbf{points}.

Here, a similarity can already be seen to the elements of Kirby diagrams,
which are the $S^2$s corresponding to 3-handles,
1-dimension curves for 2-handles,
and $D^3$s (thick points) for 1-handles.
Kirby diagrams themselves are not flexible enough for arbitrary graphical calculus in a $G\times$-BSFC.
This leads us to the definition of planar diagrams,
which are an intermediate step between Kirby diagrams and the desired graphical calculus.
Figure \ref{fig:manifolds to morphisms} gives an overview over all steps necessary to compute the invariant from a manifold.

\subsection{Unlabelled planar diagrams}
\fxerror{Clarify their role. Every Kirby diagram gives rise to one, and in principle all moves generalise to planar diagrams (no actually not. e.g. 2-1).
But it's more convenient to have this definition, especially also for later with the simplices.
Basically, these diagrams are the vessel for graphical calculus.}
\fxerror{planar? closed? what now?}
\begin{definition}
	\label{def:unlabelled diagram}
	An unlabelled, closed \textbf{diagram} $\mathcal{D}$ consists of the following data:
	\begin{enumerate}
		\setcounter{enumi}{-1}
		\item A finite set $\mathcal{D}_0$ of embeddings $p\colon D^3 \to \R^3$,
			called (thickened) \textbf{points},
		\item a finite set $\mathcal{D}_1$ of framed embeddings of \textbf{lines} $[-1, 1]$ and \textbf{circles} $S^1$
			(collectively called \textbf{ribbons})
			into $\R^3$ such that the boundaries (endpoints) of each line are embedded in the boundaries of the thickened points,
		\item a finite set $\mathcal{D}_2$ of smooth embeddings of $n$-gons (see Definition \ref{def:n-gons}),
			called \textbf{sheets},
			or sometimes \textbf{disks} (when $n > 0$) and \textbf{spheres}.
			into $\R^3$ such that each boundary component of each disk is either a line or embedded on a boundary of a thickened point.
	\end{enumerate}
\end{definition}

\begin{figure}
	\centering
	\begin{tikzpicture}[ looseness = 3 ]
		\pgfdeclarelayer{threehandle}
		\pgfsetlayers{threehandle,main}
		\node [ 1-handle ] (p) {$p$};
		\draw [ thick ] (p.45) to [ out = 45, in = -45 ] (p.-45);
		\begin{pgfonlayer}{threehandle}
			\path [ 3-handle ] (p.45) to [ out = 45, in = -45 ] (p.-45);
		\end{pgfonlayer}
	\end{tikzpicture}
	\caption{An unlabelled closed diagram with a single point, a single line, and a single disk.}
\end{figure}

\begin{remark}
	$n$-gons are manifolds with corners,
	likewise their embedded image will have corners.\footnote{%
		Instead of smooth embeddings of manifolds with corners,
		one may imagine a \emph{piecewise smooth} embedding of a disk or sphere instead.
		The formulation with corners appears cleaner though,
		particularly in the light of Definition \ref{def:handles},
		where handles are defined as manifolds with corners.
	}
	The $n$-gons will always be $2k$-gons, in fact,
	where $k$ is the number of times the sheet attaches to a thickened point
	(or equivalently the number of lines it attaches to).
	The corners of the sheet will always be attached to an endpoint of a line embedding.
\end{remark}

The reader may be concerned at this point that neither sheets nor ribbons were required to be thickened, or framed.
The reason is that disks and spheres contain no framing information,
and the framing of a ribbon can be canonically chosen to be the \emph{blackboard framing} by choosing a projection into the plane.
(For further details, also see Appendix \ref{sec:blackboard framing}.)
\fxwarning{Possibly absorb everything important into def. below and move this bracket}
Since we will ultimately want to evaluate diagrams in monoidal categories
(which have a diagrammatic calculus in the plane),
we shall make sure that the projection of a diagram contains all essential information.
This is encoded in the following definition:
\begin{definition}[Blackboard framing]
	We single out a projection into the plane onto the first two components of a vector:
	\begin{align}
		\pi \colon \R^3 &\xrightarrow{\cong} \R^2 \oplus \R \xrightarrow{\pi_{\R^2}} \R^2 \\\nonumber
		(x,y,z) &\mapsto (x,y)
	\end{align}
	A diagram $\mathcal{D}$ is compatible with the \textbf{blackboard framing} if the following conditions are satisfied:
	\begin{itemize}
		\item Each thickened point $D^3$ can be split in half along an \textbf{equator} $S^1$ such that its boundary is split into ``upper'' and ``lower'' disks,
			$S^2 = D^2_u \cup_{S^1} D^2_l$
			and each disc $D^2_{\{u,l\}}$ is \emph{embedded} into $\R^2$ by $\pi$.
		\item Lines attach to thickened points transversely in the equator.
		\item On sheets, $\pi$ is an immersion except at a finite set of 1-dimensional compact embedded piecewise smooth manifolds, the \textbf{fold graphic}
			\cite[Section 1.3]{Schommer-Pries:PhD}.
			The non-smooth points of the fold graphic are called \textbf{cusp points},
			and the smooth parts \textbf{fold arcs}.
			\fxnote{Noone says that has to be a manifold.
				In fact I'm not sure this will always work.
				(E.g. for the full twist it seems we need a cusp.)
				We can have cusps as well,
				and then I need to prove one relation that $1_{\acts{g}{X}} = (\eta \otimes 1) \circ (1 \otimes \eta)$.
			}
		\item On the union of fold arcs and ribbons, $\pi$ is an embedding except at a finite set of points, the \textbf{crossings}.
	\end{itemize}
\end{definition}
\begin{remark}
	In the projection, a thickened point is given by simply an embedded $D^2$.
\end{remark}
\begin{remark}
	\fxerror{But do we ever use these orientations anywhere?}
	The embedded diagram data are understood to be oriented.
	The resulting data is sometimes depicted graphically, which manifests depending on the dimension on the datum:
	\begin{enumerate}
		\setcounter{enumi}{-1}
		\item The orientation of points can either agree with the orientation of $\R^3$,
			or be opposite, specifying a sign.
		\item The orientation on a ribbon manifests as a direction along the ribbon,
			often depicted with a little arrow.
		\item The orientation of a sheet can coincide locally with the orientation of the plane $\R^2$,
			or be opposite, specifying a sign per noncritically embedded stratum.
	\end{enumerate}
\end{remark}
\begin{remark}
	Ribbons (and borders of thickened points) locally partition the plane into two ``sides''.
	$\pi$ being an immersion on sheets outside of fold arcs implies that sheets incident to ribbons never change the side locally.

	This may be confusing at first when recalling that we emphasized in Remark \ref{rem:single picture} that the single picture conventions
	(Definition \ref{def:single picture conventions})
	imply that in a 3-2-handle attachment,
	the 3-handle must follow the framing of the 2-handle.
	But since any ribbon also follows the blackboard framing,
	it does not ``turn'' in the plane,
	so these two conditions conform.
	\fxerror{Ok, so this could actually work!
		Read probably Turaev's book and hopefully it has the graphical calculus of braided cats \emph{with blackboard framing}.
		Also look in my thesis whether I used any good reference on that.
		Another good reference may be Turaev, V., Quantum invariants of knots and 3-manifolds
	}
\end{remark}

The parts where $\pi$ is critical on the diagram,
the fold graphic and the crossings,
will later correspond to structure morphisms of the $G$-crossed fusion category.
Certain regularity conditions need to be imposed in order to make the diagrams suitable for graphical calculus.

\begin{definition}[Regular diagrams]
	\label{def:regular diagram}
	A diagram $\mathcal{D}$ is \textbf{regular} if the following conditions are satisfied:
	\fxerror{If I do $n$-gons I need to say that the corners of the $n$-gons don't intersect anything}
	\begin{itemize}
		\item It is compatible with the blackboard framing.
		\item The projections of ribbons and fold arcs do not intersect projections of thickened points.
		\item The projections of exactly two ribbons, two fold arcs or a ribbon and a fold arc,
			intersect transversely at a crossing.
		\item The projections of different crossings and cusp points never coincide.
	\end{itemize}
\end{definition}
\begin{remark}
	One can convince oneself that one can perturb any planar diagram with an isotopy to a regular diagram.
	They are thus also often said to be in \textbf{general position},
	and we will usually assume that a diagram is regular.
\end{remark}
\fxwarning{Take a best-of of my diagrams and label them with all that goodie stuff}

\begin{definition}
	\label{def:regular isotopy}
	An \textbf{regular planar isotopy} is an isotopy of planar diagrams,
	such that they are regular, and each intermediate diagram is regular.
\end{definition}

\begin{figure}
	\tikzmath{
		\height   = 1.0;
		\width    = 0.5;
		\widthree = 0.9;
		\thirdx   = 0.3;
		\twistheight = 1.5;
		\twistwidth  = 1;
		\cancellationsize = 2;
	}
	\centering
	\begin{tabular}{p{4cm}cc}
		\toprule
		Name
			& Left hand side
				& Right hand side
		\\\midrule
		Point isotopy
			&
			\begin{tikzpicture}[ looseness = 4, baseline ]
				\pic { point isotopy = 0.6 };
			\end{tikzpicture}
				&
				\begin{tikzpicture}[ looseness = 4, baseline ]
					\pic { point isotopy = {-0.8} };
				\end{tikzpicture}
			\\\midrule
		Reidemeister I'
		(twist move)
			&
			\tikzmath{
				\intersectionslant = 25;
				\crossingBL = 270 - \intersectionslant;
				\crossingBR = 270 + \intersectionslant;
				\crossingTR =  90 - \intersectionslant;
				\crossingTL =  90 + \intersectionslant;
				\foldcrossingslant = 40;
				\foldcrossingBL = 270 - \foldcrossingslant;
				\foldcrossingBR = 270 + \foldcrossingslant;
				\foldcrossingTR =  90 - \foldcrossingslant;
				\foldcrossingTL =  90 + \foldcrossingslant;
				}
			\begin{tikzpicture}[ looseness = 1.5, baseline ]
				\coordinate (lowerend) at (0, -\twistheight);
				\coordinate (lowerfoldcrossing) at (0, -1);
				\coordinate (lower) at (0.2, -0.6);
				\coordinate (lowerturn) at ($(lower)+(0.6, 0)$);
				\coordinate (middle) at (0, 0);
				\coordinate (upperturn) at ($2*(lowerturn |- middle) - (lowerturn)$);
				\coordinate (upper) at ($2*(lower |- middle) - (lower)$);
				\coordinate (upperfoldcrossing) at ($2*(lowerfoldcrossing |- middle) - (lowerfoldcrossing)$);
				\coordinate (upperend) at ($2*(lowerend |- middle) - (lowerend)$);
				\coordinate (left) at (-\twistwidth, 0);
				\coordinate (foldleft) at (-0.4, 0);
				\coordinate (foldright) at (1.5, 0);

				\draw [ thick ]
					(lowerend) -- (lowerfoldcrossing)
					to [ out =  90, in = \crossingBL ]
					(lower)
					to [ out = \crossingTR, in =  90 ]
					(lowerturn);
				\draw [ thick ]
					(upperturn)
					to [ out = 270, in = \crossingBR ]
					(upper)
					to [ out = \crossingTL, in = -90 ]
					(upperfoldcrossing) -- (upperend);
				\draw [ diagramwhite ]
					(lowerturn)
					to [ out = 270, in = \crossingBR ]
					(lower)
					to [ out = \crossingTL, in = 270 ]
					(middle)
					to [ out =  90, in = \crossingBL ]
					(upper)
					to [ out = \crossingTR, in =  90 ]
					(upperturn);

				\draw [ thick ]
					(lowerturn)
					to [ out = 270, in = \crossingBR ]
					(lower)
					to [ out = \crossingTL, in = 270 ]
					(middle)
					to [ out =  90, in = \crossingBL ]
					(upper)
					to [ out = \crossingTR, in =  90 ]
					(upperturn);

				\fill [ 3-handle fill ]
					(left |- lowerend)
					--
					(lowerend) -- (lowerfoldcrossing)
					to [ out =  90, in = \crossingBL ]
					(lower)
					to [ out = \crossingTR, in =  90 ]
					(lowerturn)
					to [ out = 270, in = \crossingBR ]
					(lower)
					to [ out = \crossingBL, in =  90 ]
					(lowerfoldcrossing)
					to [ out = \foldcrossingBR, in = 270 ]
					(foldright)
					to [ out =  90, in = \foldcrossingTR ]
					(upperfoldcrossing)
					to [ out = 270, in = \crossingTL ]
					(upper)
					to [ out = \crossingTR, in = 90 ]
					(upperturn)
					to [ out = 270, in = \crossingBR ]
					(upper)
					to [ out = \crossingTL, in = -90 ]
					(upperfoldcrossing) -- (upperend)
					--
					(left |- upperend)
					-- cycle;

				\fill [ 3-handle fill ]
					(upperfoldcrossing)
					to [ out = 270, in = \crossingTL]
					(upper)
					to [ out = \crossingBL, in =  90 ]
					(middle)
					to [ out = 270, in = \crossingTL ]
					(lower)
					to [ out = \crossingBL, in =  90 ]
					(lowerfoldcrossing)
					to [ out = \foldcrossingTL, in = 270 ]
					(foldleft)
					to [ out =  90, in = \foldcrossingBL ]
					(upperfoldcrossing);

				\draw [ 3-handle ]
					(foldleft)
					to [ out =  90, in = \foldcrossingBL ]
					(upperfoldcrossing)
					to [ out = \foldcrossingTR, in =  90 ]
					(foldright)
					to [ out = 270, in = \foldcrossingBR ]
					(lowerfoldcrossing)
					to [ out = \foldcrossingTL, in = 270 ]
					(foldleft);
			\end{tikzpicture}
				&
				\begin{tikzpicture} [ baseline ]
					\draw [ thick ] (0, -\twistheight) -- (0, \twistheight);
					\fill [ 3-handle fill ] (0, \twistheight) rectangle (-\twistwidth, -\twistheight);
				\end{tikzpicture}
		\\\midrule
		Reidemeister II (cancellation of inverse crossings)
			&
			\begin{tikzpicture} [ baseline ]
				\draw [ diagram ]
					( \width   , -\height)
					to [ out = 90, in = -90 ]
					(-\width   ,        0)
					to [ out = 90, in = -90 ]
					( \width   ,  \height);
				\draw [ white, line width = \diagramunderwhite ]
					(-\width   , -\height)
					to [ out = 90, in = -90 ]
					( \width   ,        0)
					to [ out = 90, in = -90 ]
					(-\width   ,  \height);
				\path [ 3-handle fill ]
					( \width   , -\height)
					to [ out = 90, in = -90 ]
					(-\width   ,        0)
					to [ out = 90, in = -90 ]
					( \width   ,  \height)
					--
					( \widthree,  \height)
					--
					( \widthree, -\height)
					-- cycle;
				\draw [ thick ]
					(-\width   , -\height)
					to [ out = 90, in = -90 ]
					( \width   ,        0)
					to [ out = 90, in = -90 ]
					(-\width   ,  \height);
				\path [ 3-handle fill = roch ]
					(-\width   , -\height)
					to [ out = 90, in = -90 ]
					( \width   ,        0)
					to [ out = 90, in = -90 ]
					(-\width   ,  \height)
					--
					(-\widthree,  \height)
					--
					(-\widthree, -\height)
					-- cycle;
			\end{tikzpicture}
				&
				\begin{tikzpicture} [ baseline ]
					\draw [ diagram ]
						( \width   , -\height)
						--
						( \width   ,  \height);
					\path [ 3-handle fill ]
						( \width   , -\height)
						--
						( \width   ,  \height)
						--
						( \widthree,  \height)
						--
						( \widthree, -\height)
						-- cycle;
					\draw [ thick ]
						(-\width   , -\height)
						--
						(-\width   ,  \height);
					\path [ 3-handle fill = roch ]
						(-\width   , -\height)
						--
						(-\width   ,  \height)
						--
						(-\widthree,  \height)
						--
						(-\widthree, -\height)
						-- cycle;
				\end{tikzpicture}
		\\\midrule
		Reidemeister III
		(crossing isotopy)
			&
			\begin{tikzpicture} [ baseline ]
				\draw [ thick ] (-\thirdx, -\height) -- (-\thirdx, \height);
				\draw [ diagram ]
					( \width   , -\height)
					to [ out = 90, in = -90 ]
					(-\width   ,  \height);
				\draw [ white, line width = \diagramunderwhite ]
					(-\width   , -\height)
					to [ out = 90, in = -90 ]
					( \width   ,  \height);
				\path [ 3-handle fill ]
					( \width   , -\height)
					to [ out = 90, in = -90 ]
					(-\width   ,  \height)
					--
					( \widthree,  \height)
					--
					( \widthree, -\height)
					-- cycle;
				\draw [ thick ]
					(-\width   , -\height)
					to [ out = 90, in = -90 ]
					( \width   ,  \height);
				\path [ 3-handle fill = roch ]
					(-\width   , -\height)
					to [ out = 90, in = -90 ]
					(0\width   ,  \height)
					--
					(-\widthree,  \height)
					--
					(-\widthree, -\height)
					-- cycle;
			\end{tikzpicture}
				&
				\begin{tikzpicture} [ baseline ]
					\draw [ thick ] (\thirdx, -\height) -- (\thirdx, \height);
					\draw [ diagram ]
						( \width   , -\height)
						to [ out = 90, in = -90 ]
						(-\width   ,  \height);
					\draw [ white, line width = \diagramunderwhite ]
						(-\width   , -\height)
						to [ out = 90, in = -90 ]
						( \width   ,  \height);
					\path [ 3-handle fill ]
						( \width   , -\height)
						to [ out = 90, in = -90 ]
						(-\width   ,  \height)
						--
						( \widthree,  \height)
						--
						( \widthree, -\height)
						-- cycle;
					\draw [ thick ]
						(-\width   , -\height)
						to [ out = 90, in = -90 ]
						( \width   ,  \height);
					\path [ 3-handle fill = roch ]
						(-\width   , -\height)
						to [ out = 90, in = -90 ]
						(0\width   ,  \height)
						--
						(-\widthree,  \height)
						--
						(-\widthree, -\height)
						-- cycle;
				\end{tikzpicture}
		\\\midrule
		Cusp isotopy
			&
			\begin{tikzpicture} [ baseline ]
				\pic {cusp strike={1}};
			\end{tikzpicture}
				&
				\begin{tikzpicture} [ baseline ]
					\pic {cusp strike={-1}};
				\end{tikzpicture}
		\\\midrule
		Cusp moves.
		Shown here: cancellation/inversion.
		Also relevant: Other inversion side, flip, swallowtail.
			&
			\begin{tikzpicture} [ baseline ]
				\coordinate (top)        at (0  ,  \cancellationsize);
				\coordinate (right)      at (\cancellationsize,  0  );
				\coordinate (turnmargin) at (0  ,  0.1);
				\coordinate (turn)       at (0.4,  0  );
				\coordinate (upperturn)  at ($        (turnmargin) + (turn)$);
				\coordinate (lowerturn)  at ($(0,0) - (turnmargin) - (turn)$);
				\coordinate (cusp)       at (0  , -1.2);

				\draw [ 3-handle ] ($(top)+(right)$) rectangle ($(0,0)-(top)-(right)$);
				\draw [ 3-handle, preaction = { 3-handle fill } ]
					($(0,0)-(cusp)$)
					to [ out = 300, in =  90 ]
					(upperturn)
					to [ out = 270, in =  60 ]
					(cusp)
					to [ out = 120, in = 270 ]
					(lowerturn)
					to [ out =  90, in = 240 ]
					cycle;
				\draw [ 3-handle draw, dashed, thick ]
					($(0,0)-(right)$)
					to [ out =   0, in =  90 ]
					(upperturn)
					to [ out = 270, in =  90 ]
					(lowerturn)
					to [ out = 270, in = 180 ]
					($(0,0)+(right)$);
			\end{tikzpicture}
				&
				\begin{tikzpicture} [ baseline ]
					\draw [ 3-handle ]
						( \cancellationsize,  \cancellationsize)

						rectangle
						(-\cancellationsize, -\cancellationsize);
					\end{tikzpicture}
		\\\bottomrule
	\end{tabular}
	\label{fig:planar moves}
	\caption{
		Different kinds of planar moves.
		Note that on each ribbon,
		there are infinitely many possibilities of attaching further 3-handles,
		for which only one example per move is shown.
		Furthermore, each move that occurs for a ribbon can also occur for a fold line.
	}
\end{figure}

\begin{figure}
	\centering
	\begin{tabular}{ccc}
		\toprule
		\begin{tikzpicture}
			\pic {3-handle-over-2-handle={2}{0.3}};
		\end{tikzpicture}
			&
			\begin{tikzpicture}
				\pic {3-handle-over-2-handle={2}{1.0}};
			\end{tikzpicture}
				&
				\begin{tikzpicture}
					\pic {3-handle-over-2-handle={2}{1.3}};
				\end{tikzpicture}
		\\\midrule
		Regular: sheet above ribbon
			& Critical point
				& Regular: sheet beside ribbon
		\\\bottomrule
	\end{tabular}
	\label{fig:regular}
	\caption{
		By changing the projection,
		a regular diagram can change into one that does not satisfy the regularity condition.
		In this case, the intersection of the projection of a fold arc and a ribbon is not transverse.
		A further change leads again to a regular diagram,
		but one that is not regularly isotopic to the original one.
		They are related by a planar move though,
		in this case the Reidemeister II move.
	}
\end{figure}

\fxerror{Baselines}
\fxerror{The other cusp cancellation is easier to draw}
\fxerror{Dashed line stronger contrast. And align squares}
\fxerror{Some moves like Reidemeister 2 can occur for ribbons as well as for folds,
so invariance under them needs 2 proofs.}

\begin{lemma}
	\label{lem:planar moves}
	Two regular planar diagram are framed isotopic iff they are related by a finite sequence of regular planar isotopies and \textbf{planar moves},
	illustrated in Figures \ref{fig:planar moves} and \ref{fig:regular}.
\end{lemma}
\begin{proof}
	For points and ribbons, this is standard,
	e.g. \cite[Theorem 3.7]{joyalstreet1991braidedI} \cite[Theorem 6.1]{Shum1994Tortiletensorcategories}.
\fxnote{Cite Schommer-Pries appropriately.
	There are moves for arcs/circles/ribbons, and they're a tiny tad harder now because they need to pull the sheets around.
	And there are moves for the sheets: E.g. move an arc under a fold maximum/minimum or bifurcation/cusp, but also cancel two cusps.
	In that example, I guess I can first move all arcs away from cusps and then cancel them while they're not acting on anything? (The cusp cancellation area is contractible, so contract it and cancel it there.)
}
	For sheets, this is an exercise in Cerf theory,
	carried out in detail e.g. in \cite[Chapter 1.5]{Schommer-Pries:PhD}.
\end{proof}

\fxnote{Arrgh does that mean that I've ruled out Reidemeister I' because there a fold arc meets 2 pieces of ribbon, and one of them not tranversely?
Wait no. Reidemeister I' occurs \emph{precisely} because of the blackboard framing!}

\begin{remark}
	The Reidemeister I move,
	which equates a single twist by $2\pi$ of one ribbon to an untwisted ribbon,
	does not appear among the list of moves.
	This is intentional.
	While it is important for unframed knot and link invariants,
	it is not admissible in framed invariants since it changes the framing.
	The manifold invariant developed here is sensitive to the framing
	(otherwise it would not be able to distinguish $\CP{2}$ and $S^4$),
	and thus cannot validate the Reidemeister I move.

	In ribbon categories, this is mirrored algebraically by the fact that in the non-symmetric case,
	the twist is not trivial.
	In $G\times$-BSFCs, the twist is not even an endomorphism canonically.
	For a $g$-graded object $X \in \mathcal{C}_g$,
	it types as $\theta_X\colon X \to \acts{g}{X}$,
	and there is in general no coherence isomorphism to compare it to.
	(While $X$ and $\acts{g}{X}$ are isomorphic,
	they are not canonically isomorphic if $g$ is nontrivial.)
\end{remark}

We have drawn diagrams of Kirby diagrams before in Section \ref{sec:Kirby calculus}.
They are indeed diagrams in the sense of Definition \ref{def:unlabelled diagram}.
\begin{definition}
	\label{def:kirby diagram to planar diagram}
	A Kirby diagram $K$
	(in the sense of Definition \ref{def:kirby diagram})
	of $M$ straightforwardly gives rise to an unlabelled planar diagram $\mathcal{D}$,
	the \textbf{derived} planar diagram.
	\fxwarning{Should we call it D here if we're gonna call it K later all the time? Or make a point of the distinction?}
	It is computed by translating the attaching spheres to elements of the diagram:
	\begin{enumerate}
		\item Every 1-handle $h_1$ gives rise to two thickened points $D^3_\pm$,
			for each ball of the attaching region of $h_1$.
		\item A 2-handle gives rise to an embedded circle (a knot)
			if it is not attached to any 1-handles,
			or else to one or several lines incident on the thickened points corresponding to the 1-handles it is attached to.
			Possibly, an isotopy has to be applied such that the blackboard framing of the embedding matches the given framing of the attachment.
		\item A 3-handle gives rise to an embedded sphere
			if it is not attached to any handles,
			or else to one or several disks incident on the thickened points and ribbons corresponding to the 1-handles and 2-handles it is attached to.
			Possibly, an isotopy has to be applied such that in the projection,
			no disk ``changes the side'' of a 2-handle it is attached to
			(see Appendix \ref{sec:details on diagrams} for details).
	\end{enumerate}
\end{definition}
\begin{remark}
	\label{rem:isotopies of kirby diagrams and planar diagrams}
	Isotopies of Kirby diagrams correspond to isotopies of planar diagrams,
	so Lemma \ref{lem:planar moves} again applies:
	Two Kirby diagrams are isotopic if they are related by regular isotopies and planar moves.
\end{remark}

\fxerror{Draw the twist. Reference \cite[Remark 2.2.7]{DouglasReutter2018fusion} and \cite[Figure 46]{Schaumann:GrayCats}.
(Only do one sheet and say that the other goes analogously.)
Possibly do Reidemeister I' to show that everything cancels.}

\subsection{Labelled diagrams}
\label{sec:labelled diagrams}

If a regular diagram is labelled appropriately with data from a $G\times$-BSFC $\mathcal{C}$,
a morphism in $\mathcal{C}$ can be extracted.
This situation is set up in the following two definitions.
\begin{definition}
	A sheet with boundary on a ribbon $r$ is called \textbf{incident} to $r$.
	\fxwarning{Why a new word? Why not just ``attached''?}
	All sheets $\{(s_i,\pm)\}$ incident to $r$ are written as $\incident r$.
	They form an ordered multiset,
	starting at the top (viewed from the projection) right-hand (viewed from the ribbon orientation)
	sheet and going completely around the ribbon with the right-hand rule
	(starting, at first, into the drawing plane).
	The sign is $+$ if the boundary orientation of the sheet matches the orientation of the ribbon,
	and $-$ otherwise.

	Analogously, the ribbons $\{(r_i,\pm)\}$ incident to a thickened point $p$ are denoted as $\incident p$.
	This is a cyclically ordered set,
	starting anywhere on the boundary $S^1$ of the projected disk of $p$ and proceeding counter-clockwise.
	The sign encodes with which endpoint the ribbon attaches to the point.
	(It may attach with both ends.)
\end{definition}
\begin{definition}
	\label{def:labelling}
	A \textbf{labelling} of a diagram $\mathcal{D}$ with a $G$-crossed fusion category $\mathcal{C}$ consists of three functions with the following signatures:
	\begin{align}
		g &\colon \mathcal{D}_3 \to G\\
		X &\colon \mathcal{D}_2 \to \mathcal{O}(\mathcal{C})\\
		\iota &\colon \mathcal{D}_1 \to \mor \mathcal{C}
		\intertext{They need to satisfy the following \textbf{typing relations}:}
		\deg (X(r)) & = \prod_{(s,\pm) \in \incident r} g(s)^\pm\\
		\iota(p) &\in \left< \bigotimes_{(r, \pm) \in \incident p} X(r)^\pm \right>
	\end{align}
	\fxwarning{Notation for vector/morphism has only been introduced in the table.}
	Here, $g(s)^\pm$ denotes either $g(s)$ or $g(s)^{-1}$,
	depending on the sign of $(s, \pm)$.
	Similarly, by $X(r)^\pm$ we mean $X(r)$ for $+$ and $X(r)^*$ for $-$.
	The product is performed in the order specified in the previous definition.
\end{definition}
Informally, a labelling attaches group elements to sheets,
simple objects to ribbons,
and morphisms to the points.
The degree of the objects on a ribbon is given by the sheets incident to the ribbon,
and the morphism of a point must be in the morphism space of the objects labelling the incident ribbons.

To extract a morphism from a labelled diagram,
we project the diagram into the plane and mostly apply the well-known diagrammatic calculus of pivotal fusion categories,
treating the crossings and sheets as additional data.
These translate directly to $G$-crossed structures:
\begin{definition}
	\label{def:labelled diagram}
	A labelled, regular diagram $\mathcal{D}$ can be evaluated to an endomorphism of $\mathcal{I}$ (a complex number).
	To evaluate the diagram for given $g$, $X$ and $\iota$,
	follow this algorithm:
	\begin{enumerate}
		\item Starting from the back of the drawing plane,
			whenever a sheet $s$ covers ribbons or points,
			$g(s)^\pm$ acts on objects and morphisms labelling them.
			(The sign specifies whether the projection maps the sheet orientation onto the canonical orientation of the plane.)
		\item Insert a crossed braiding for an intersection point of two ribbons (Figure \ref{fig:braiding}),
			and appropriate $G$-crossed coherence isomorphism for every sheet incident on the right hand side of the overcrossing ribbon.
			(Figure \ref{fig:braiding with sheet the other way}).
		\item Insert appropriate $G$-crossed coherence isomorphisms at intersections involving at least one sheet fold arc.
			(Figure \ref{fig:G-coherences}).
			(For cusps, no morphism has to be inserted.)
		\item Interpret the resulting diagram in the graphical calculus of pivotal fusion categories.
	\end{enumerate}
	The resulting endomorphism of $\mathcal{I}$, and equivalently the corresponding number,
	is denoted as $\left<\mathcal{D}(g, X, \iota)\right>$.
	\fxwarning{Link to example calculations as soon as they're illustrated?}
\end{definition}
\begin{remark}
	Without difficulty,
	this definition can be generalised to open diagrams as in Section \ref{sec:spherical intro}.
	The result is then in a morphism space $\langle A \otimes B \otimes \cdots \rangle$,
	where $A$, $B$, \dots are the open-ended ribbons in the diagram.
\end{remark}
\begin{remark}
	Since $G\times$-BSFCs satisfy a coherence theorem
	(\cite{Mueger2010:StructureBraidedCrossedGCategories}, see also \cite[Theorem 2.3]{Cui2016TQFTs}),
	the choice of coherence isomorphism is always uniquely defined,
	given all group labellings $g$ and object labellings $X$.
\end{remark}
\begin{remark}
	A sheet incident on a point does not act on the morphism that labels the point,
	but only on the objects (resp. morphisms) labelling the ribbons (resp. points) which are \emph{covered} by the sheet in the projection.
\end{remark}
\begin{remark}
	Fold lines play a surprisingly small role in this calculus.
	It is usually helpful imagine fold lines to be labelled with the monoidal unit $I$ of the category.
	This makes it more intuitive why they do not influence the evaluation.
\end{remark}
Examples are found in Section \ref{sec:calculations}.

\tikzmath{
	\height      = 1.3;
	\width       = 0.8;
	\widthree    = 1.1;
	\labelheight = 0.6 * \height;
	\labelwidthh = 0.8 * \width;
}
\begin{figure}
	\centering
	\begin{tikzpicture} [ baseline ]
		\draw [ diagram, directed = 0.3 ]
			( \width   , -\height)
			node [ below ] {$r_2$}
			to [ out = 90, in = -90 ]
			(-\width   ,  \height);
		\draw [ white, line width = \diagramunderwhite ]
			(-\width   , -\height)
			to [ out = 90, in = -90 ]
			( \width   ,  \height);
		\path [ 3-handle fill ]
			( \width   , -\height)
			to [ out = 90, in = -90 ]
			(-\width   ,  \height)
			--
			(-\widthree,  \height)
			--
			(-\widthree, -\height)
			-- cycle;
		\draw [ thick, directed = 0.3 ]
			(-\width   , -\height)
			node [ below ] {$r_1$}
			to [ out = 90, in = -90 ]
			( \width   ,  \height);
		\path [ 3-handle fill = roch ]
			(-\width   , -\height)
			to [ out = 90, in = -90 ]
			( \width   ,  \height)
			--
			(-\widthree,  \height)
			--
			(-\widthree, -\height)
			-- cycle;
		\node at (0,  \labelheight) {$s_1$};
		\node at (0, -\labelheight) {$s_2$};
	\end{tikzpicture}
	$\xmapsto{\text{evaluation}}$
	\begin{tikzpicture} [ baseline ]
		\node [ plaquette ] (braiding) {$c$};
		\draw [ diagram, directed ]
			(-\width, -\height)
			node [ below ] {$X(r_1)$}
			-- (braiding)
		;
		\draw [ diagram, directed ]
			( \width, -\height)
			node [ below ] {$X(r_2)$}
			-- (braiding)
		;
		\draw [ diagram, opdirected ]
			(-\width,  \height)
			node [ above ] {$\!\!\!\!\!\acts{g(s_1)}{X(r_2)}$}
			-- (braiding)
		;
		\draw [ diagram, opdirected ]
			( \width,  \height)
			node [ above ] {$X(r_1)$}
			-- (braiding)
		;
	\end{tikzpicture}
	\caption{Evaluating the crossed braiding for the sheet labelling $g$ and the ribbon labelling $X$.
		The typing constraints demand that $\deg(X(r_1)) = g(s_1)$
		and $\deg(X(r_2)) = g(s_2)$.
		}
	\label{fig:braiding}
\end{figure}
\begin{figure}
	\centering
	\begin{tikzpicture} [ baseline ]
		\draw [ diagram, directed ]
			( \width   , -\height)
			node [ below ] {$r_2$}
			to [ out = 90, in = -90 ]
			(-\width   ,  \height);
		\draw [ white, line width = \diagramunderwhite ]
			(-\width   , -\height)
			to [ out = 90, in = -90 ]
			( \width   ,  \height);
		\path [ 3-handle fill ]
			( \width   , -\height)
			to [ out = 90, in = -90 ]
			(-\width   ,  \height)
			--
			(-\widthree,  \height)
			--
			(-\widthree, -\height)
			-- cycle;
		\draw [ thick, directed ]
			(-\width   , -\height)
			node [ below ] {$r_1$}
			to [ out = 90, in = -90 ]
			( \width   ,  \height);
		\path [ 3-handle fill = roch ]
			(-\width   , -\height)
			to [ out = 90, in = -90 ]
			( \width   ,  \height)
			--
			( \widthree,  \height)
			--
			( \widthree, -\height)
			-- cycle;
		\node at ( \labelwidthh, 0) {$s_1$};
		\node at (-\labelwidthh, 0) {$s_2$};
	\end{tikzpicture}
	$\xmapsto{\text{evaluation}}$
	\begin{tikzpicture} [ baseline ]
		\node [ plaquette ] (braiding)  at ( 0.2, -0.5) {$c$};
		\node [ plaquette ] (coherence) at (-0.4,  0.5) {$\epsilon \circ \eta$};
		\draw [ diagram, directed ]
			(-\width, -\height)
			node [ below ] {$X(r_1)$}
			-- (braiding)
		;
		\draw [ diagram, directed ]
			( \width, -\height)
			node [ below ] {$\acts{g(s_1)}{X(r_2)}$}
			-- (braiding)
		;
		\draw [ diagram, opdirected2 ]
			(-\width,  \height)
			node [ above ] {$X(r_2)$}
			-- (coherence)
			-- (braiding)
		;
		\draw [ diagram, opdirected ]
			( \width,  \height)
			node [ above ] {$X(r_1)$}
			-- (braiding)
		;
	\end{tikzpicture}
	\caption{For sheets on the right hand side of an overcrossing ribbon,
		an additional coherence has to be inserted.
		Note that here, $\deg(X(r_1)) = g(s_1)^{-1}$,
		as becomes apparent when considering a coordinate patch in which $s_1$ and $r_1$ are mapped on the upper half plane and the $x$-axis.
	}
	\label{fig:braiding with sheet the other way}
\end{figure}

\begin{figure}
	\centering
	\tikzmath{
		\coherenceheight = 2;
	}
	\begin{tikzpicture} [ baseline ]
		\pic {3-handle-over-2-handle={\coherenceheight}{0.3}};
	\end{tikzpicture}
	=
	\begin{tikzpicture} [ baseline ]
		\pic {3-handle-over-2-handle={\coherenceheight}{1.3}};
	\end{tikzpicture}
	\qquad
	$\xmapsto{\text{evaluate}}$
	\qquad
	\begin{tikzpicture}
		[ baseline
		, decoration =
			{ markings
			, mark=at position 0.25 with {\favouritearrow}
			, mark=at position 0.5  with {\favouritearrow}
			, mark=at position 0.75 with {\favouritearrow}
			}
		]
		\node [ plaquette ] (coherencelower) at (0,-0.9)                {$\eta^{-1}$};
		\node [ plaquette ] (coherenceupper) at ($-1*(coherencelower)$) {$\eta$};
		\draw
			[ diagram
			, postaction={decorate}
			]
			(0, -\coherenceheight)
			node [ below ] {$X(r)$}
			--
			(coherencelower)
			--
			node [ right ] {$\acts{g(s)^{-1}}{(\acts{g(s)}{X(r)})}$}
			(coherenceupper)
			--
			(0,  \coherenceheight)
			node [ above ] {$X(r)$}
		;
	\end{tikzpicture}
	=
	\begin{tikzpicture} [ baseline ]
		\draw [ diagram, directed ]
			(0, -\coherenceheight)
			node [ below ] {$X(r)$}
			--
			(0,  \coherenceheight)
			node [ above ] {$X(r)$}
		;
	\end{tikzpicture}
	\caption{
		An $s = S^2$ sheet covers the ribbon $r$.
		The covered interval of the ribbon is acted upon twice,
		with $g(s)^{-1}$ and $g(s)$.
		(Uniquely determined) $G$-coherence morphisms are inserted at the crossings of fold arc and ribbon.
		Once isotoped away, $s$ does not cover any other part of the diagram and thus acts as the identity.
	}
	\label{fig:G-coherences}
\end{figure}

\begin{figure}
	\centering
	\tikzmath{
		\twistheight = 2;
		\twistwidth  = 0.8;
		\intersectionslant = 25;
		\crossingBL = 270 - \intersectionslant;
		\crossingBR = 270 + \intersectionslant;
		\crossingTR =  90 - \intersectionslant;
		\crossingTL =  90 + \intersectionslant;
		\foldcrossingslant = 40;
		\foldcrossingBL = 270 - \foldcrossingslant;
		\foldcrossingBR = 270 + \foldcrossingslant;
		\foldcrossingTR =  90 - \foldcrossingslant;
		\foldcrossingTL =  90 + \foldcrossingslant;
		\baselineheight = -\twistheight / 2;
		}
	\begin{tikzpicture}[ looseness = 0.7, baseline = {(0, \baselineheight)} ]
		\coordinate (lowerend) at (0, -\twistheight);
		\coordinate (lowerfoldcrossing) at (0, -1);
		\coordinate (lower) at (0.1, -0.7);
		\coordinate (lowerturn) at ($(lower)+(0.4, 0)$);
		\coordinate (middle) at (0, -0.1);
		\coordinate (left) at (-\twistwidth, 0.1);
		\coordinate (foldleft) at (-0.2, -0.2);
		\coordinate (foldright) at (1, 0);

		\draw [ thick ]
			(lowerend) -- (lowerfoldcrossing)
			to [ out =  90, in = \crossingBL ]
			(lower)
			to [ out = \crossingTR, in =  90 ]
			(lowerturn);
		\draw [ diagramwhite ]
			(lowerturn)
			to [ out = 270, in = \crossingBR ]
			(lower)
			to [ out = \crossingTL, in = 270 ]
			(middle);

		\fill [ 3-handle fill ]
			(left |- lowerend)
			--
			(lowerend) -- (lowerfoldcrossing)
			to [ out =  90, in = \crossingBL ]
			(lower)
			to [ out = \crossingTR, in =  90 ]
			(lowerturn)
			--
			(foldright)
			to [ out =  90, in =   0]
			(left)
			-- cycle;

		\draw [ 3-handle extends ]
			(foldright)
			to [ out =  90, in =   0]
			(left)
			--
			(left |- lowerend)
			--
			(lowerend);

		\draw [ thick ]
			(lowerturn)
			to [ out = 270, in = \crossingBR ]
			(lower)
			to [ out = \crossingTL, in = 270 ]
			(middle);

		\fill [ 3-handle fill ]
			(foldright)
			-- (lowerturn)
			to [ out = 270, in = \crossingBR ]
			(lower)
			to [ out = \crossingBL, in =  90 ]
			(lowerfoldcrossing)
			to [ out = \foldcrossingBR, in = 270 ]
			(foldright)
			--
			(left |- foldright)
			-- cycle;

		\fill [ 3-handle fill ]
			(middle)
			to [ out = 270, in = \crossingTL ]
			(lower)
			to [ out = \crossingBL, in =  90 ]
			(lowerfoldcrossing)
			to [ out = \foldcrossingTL, in = 270 ]
			(foldleft)
			to [ out =  90, in = \foldcrossingTL ]
			(middle);

		\draw [ 3-handle extends ]
			(foldleft)
			to [ out =  90, in = \foldcrossingTL ]
			(middle);

		\draw [ 3-handle fold ]
			(foldright)
			to [ out = 270, in = \foldcrossingBR ]
			(lowerfoldcrossing)
			to [ out = \foldcrossingTL, in = 270 ]
			(foldleft);

		\draw [ 3-handle extends ]
			(foldleft)
			to [ out = 270, in = 270 ]
			(foldright);

		\fill [ 3-handle fill ]
			(foldright)
			to [ out = 270, in = \foldcrossingBR ]
			(lowerfoldcrossing)
			to [ out = \foldcrossingTL, in = 270 ]
			(foldleft)
			to [ out = 270, in = 270 ]
			(foldright);

		\node [ below ] at (lowerend) {$r$};
		\node [ above right ] at (left |- lowerend) {$s$};
	\end{tikzpicture}
	$\qquad\xrightarrow{\text{evaluate}}\qquad$
	\begin{tikzpicture}[ baseline = {(0, \baselineheight)} ]
		\node [ plaquette, inner sep = 0.1 ]
			(twist) at (0, -\twistheight/2) {$\theta_{X(r)}$};
		\draw [ diagram, directed2 ]
			(0, -\twistheight)
			node [ below ] {$X(r)$}
			--
			(twist)
			--
			(0, 0)
				node [ above ] {$\acts{g(s)}{X(r)}$};
	\end{tikzpicture}
	\caption{The twist is an isomorphism,
		but not canonically an automorphism.
		This can be seen from the different covering of sheets on the ends of the ribbon:
		When the ribbon performs a full twist,
		the sheet wraps it once fully.
		This adds a group action on the target object.
	}
	\label{fig:twist}
\end{figure}

\begin{lemma}
	The evaluation of a labelled diagram is invariant under isotopies of regular diagrams
	(Definition \ref{def:regular isotopy}).
\end{lemma}
\begin{proof}
	Regular isotopies do not change the topology of the projected diagram in the plane.
	By coherence of the diagrammatic calculus of pivotal fusion categories,
	the evaluation is invariant.
\end{proof}

\begin{proposition}
	\label{prop:diagram evaluation invariant under isotopy}
	The evaluation of a diagram is invariant under arbitrary (framed) isotopies of diagrams.
\end{proposition}
\begin{proof}
	By Lemma \ref{lem:planar moves} and the previous lemma,
	we need to prove invariance of the diagram evaluation from Definition \ref{def:labelled diagram} under each planar move.
	\begin{description}
		\item[Point isotopy]
			Naturality of the crossed braiding.
		\item[Reidemeister I']
			By definition of the twist, see e.g. \cite[Figure 4a]{Cui2016TQFTs}.
		\item[Reidemeister II]
			By the definition of the graphical representation of the inverse braiding
			(in the case of ribbons),
			or the inverse of the coherence $\eta$,
			as seen in Figure \ref{fig:G-coherences}.
		\item[Reidemeister III]
			By the $G$-crossed braid axiom,
			sometimes also called the ``heptagon axiom'',
			as in \cite[Definition 2.2.2]{Cui2016TQFTs}.
		\item[Cusp isotopy]
			The left hand side of this move evaluates to two cancelling coherence isomorphisms.
		\item[Cusp cancellation]
			When the folded sheet parts do not cover any other part of the diagram
			(as can be achieved with a combination of other moves such as cusp isotopies),
			then these parts do not change the diagram.
	\end{description}
\end{proof}

\begin{remark}
	\label{rem:generalise labellings}
	Once a sheet labelling $g$ and a ribbon labelling $X$ are fixed,
	a labelling $\iota$ of points with morphisms gives rise to an elementary tensor in the tensor product of all morphism spaces in the diagram:
	\begin{equation}
		\bigotimes_{p \in \mathcal{D}_1} \iota(p) \in \bigotimes_{p \in \mathcal{D}_1} \left< \bigotimes_{(r, \pm) \in \incident p} X(r)^\pm \right>
	\end{equation}
	It will turn out that arbitrary vectors in the right hand side vector space are often relevant,
	so we will generalise the notion of morphism labellings to arbitrary vectors,
	not just elementary tensors.
	The evaluation from the previous definition can be generalised uniquely to such vectors,
	by the universal property of the tensor product.

	Similarly, object labellings $X$ are generalised to a labelling with elements from the fusion algebra $\Co[\mathcal{C}]$.
	From now on, we will often implicitly make use of these two generalisations.
\end{remark}

\subsection{Kirby colours and sliding lemmas}
\label{sec:sliding lemmas}

Let us fix some notation and prove the essential lemmas for the invariant definition in Section \ref{sec:invariant}.
For this section, assume $\mathcal{C}$ to be a $G\times$-BSFC.

\subsubsection{$G$-graded fusion categories}

Recall that the set of (chosen representatives of equivalence classes of) simple objects in $\mathcal{C}$ is denoted by $\mathcal{O}(\mathcal{C})$,
and the same notation applies to any semisimple linear category.
Set cardinality is denoted as $\lvert \mathcal{O}(\mathcal{C}) \rvert$.
\begin{definition}[{\cite[Definition 6.6]{TuraevHFTAndGCats2000}}]
	The \textbf{Kirby colour} of degree $g$ is defined as the following element of the fusion algebra $\Co[\mathcal{C}]$:
	\begin{equation}
		\Omega_g \coloneqq \Omega_{\mathcal{C}_g} = \bigoplus_{X \in \mathcal{O}(\mathcal{C}_g)} \qdim{X} X
	\end{equation}
\end{definition}
\begin{remark}
	Note that $\Omega_\mathcal{C} = \bigoplus_{g \in G} \Omega_g$.
\end{remark}
\begin{remark}
	\label{rem:graded kirby colour dual}
	Unlike $\Omega_\mathcal{C}$, $\Omega_g$ is not self-dual.
	In fact, sphericality implies $\Omega_g^* \cong \Omega_{g^{-1}}$.
	This implies that in the graphical calculus,
	the orientation of ribbons labelled $\Omega_g$ needs to be specified.
\end{remark}
\begin{lemma}
	\label{lem:global dimension degree}
	Let $\mathcal{C}_g \not\simeq 0$.
	Then $\qdim{\Omega_g} = \qdim{\Omega_e}$.
\end{lemma}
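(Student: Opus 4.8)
The plan is to reduce the claim to a single identity in the fusion algebra: for any simple object $Y$ of degree $g$ one should have $Y \otimes \Omega_e = \qdim{Y}\,\Omega_g$ in $\Co[\mathcal{C}]$. Since $\mathcal{C}_g \not\simeq 0$ I may fix such a $Y \in \mathcal{O}(\mathcal{C}_g)$; then $Y^*$ has degree $g^{-1}$, so $Y \otimes X$ lies in $\mathcal{C}_g$ for every $X \in \mathcal{O}(\mathcal{C}_e)$, and $Y^* \otimes W$ lies in $\mathcal{C}_e$ for every $W \in \mathcal{O}(\mathcal{C}_g)$.

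To establish the identity I would write $N^Z_{AB}$ for the multiplicity of a simple object $Z$ in $A \otimes B$ and use rigidity in the form $N^W_{YX} = N^X_{Y^*W}$, together with the observation that, by the grading, $Y^* \otimes W$ decomposes over $\mathcal{O}(\mathcal{C}_e)$ only. Then for each $W \in \mathcal{O}(\mathcal{C}_g)$,
\begin{align*}
	\sum_{X \in \mathcal{O}(\mathcal{C}_e)} \qdim{X}\, N^W_{YX}
		&= \sum_{X \in \mathcal{O}(\mathcal{C}_e)} \qdim{X}\, N^X_{Y^*W} \\
		&= \qdim{Y^* \otimes W} = \qdim{Y}\,\qdim{W},
\end{align*}
the last equality using sphericality ($\qdim{Y^*} = \qdim{Y}$) and multiplicativity of $\qdim{-}$. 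Inserting this into the grouped decomposition $Y \otimes \Omega_e = \sum_{W \in \mathcal{O}(\mathcal{C}_g)} \bigl( \sum_{X \in \mathcal{O}(\mathcal{C}_e)} \qdim{X}\, N^W_{YX} \bigr) W$ collapses it to $\qdim{Y}\,\Omega_g$, as wanted.

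Finally I would apply the dimension functional $\Co[\mathcal{C}] \to \Co$ — which is an algebra homomorphism because $\qdim{A \otimes B} = \qdim{A}\qdim{B}$ — to both sides of $Y \otimes \Omega_e = \qdim{Y}\,\Omega_g$, obtaining $\qdim{Y}\,\qdim{\Omega_e} = \qdim{Y}\,\qdim{\Omega_g}$, and then cancel the nonzero scalar $\qdim{Y}$. The one external input is the standard fact that a simple object of a spherical fusion category over $\Co$ has nonzero categorical dimension; the crossed braiding is not used anywhere. I expect the only delicate point to be the bookkeeping around $N^W_{YX} = N^X_{Y^*W}$ — in particular making it consistent with the orientation conventions for the non-self-dual Kirby colour $\Omega_g$ (Remark \ref{rem:graded kirby colour dual}) and checking that the grading genuinely restricts the summation index to $\mathcal{O}(\mathcal{C}_e)$.
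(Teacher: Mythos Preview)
Your proof is correct and takes a genuinely different route from the paper's. The paper argues by ``jumping ahead'' to the graded sliding lemma (Lemma~\ref{lem:graded sliding lemma}): it slides an $\Omega_g$-loop over an $\Omega_e$-loop in the graphical calculus to obtain $\qdim{\Omega_g}^2 = \qdim{\Omega_g}\,\qdim{\Omega_e}$, then cancels $\qdim{\Omega_g}\neq 0$. That argument invokes the crossed braiding (through the sliding lemma) and uses a forward reference. Your argument instead proves the fusion-algebra identity $Y\otimes\Omega_e = \qdim{Y}\,\Omega_g$ directly from rigidity ($N^W_{YX}=N^X_{Y^*W}$) and the grading, then applies the dimension homomorphism and cancels $\qdim{Y}\neq 0$. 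This is the standard proof for $G$-graded spherical fusion categories (the paper cites \cite[Proposition~8.20]{OnFusionCategories} for the Frobenius--Perron analogue), and as you note it uses no crossed braiding whatsoever --- so it shows the statement already at the level of graded spherical fusion categories, not just $G\times$-BSFCs. The paper's approach is more in the spirit of the diagrammatic calculus being developed there, but yours is logically cleaner (no forward reference) and strictly more general in its hypotheses. The bookkeeping point you flag about $N^W_{YX}=N^X_{Y^*W}$ is fine: it is just $\mathcal{C}(Y\otimes X,W)\cong\mathcal{C}(X,Y^*\otimes W)$, and the grading indeed confines the sum to $\mathcal{O}(\mathcal{C}_e)$ since $Y^*\otimes W\in\mathcal{C}_e$.
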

\begin{proof}
	Jumping slightly ahead and using Lemma \ref{lem:graded sliding lemma},
	we slide a loop labelled with $\Omega_g$ over an $\Omega_e$-loop and find $\qdim{\Omega_g}^2 = \qdim{\Omega_g}\qdim{\Omega_e}$.
	Then $\qdim{\Omega_g}$ can be can cancelled since it is nonzero.
	For the analogous lemma with Frobenius-Perron dimensions,
	see \cite[Proposition 8.20]{OnFusionCategories}.
\end{proof}
\begin{remark}
	For a faithful grading
	($\mathcal{C}_g \not\simeq 0$ for every $g \in G$),
	the previous lemma is equivalent to $\qdim{\Omega_g} \cdot \lvert G \rvert = \qdim{\Omega_\mathcal{C}}$.
	But since a non-faithful $G$-grading on $\mathcal{C}$ is always given by a subgroup of $H \subset G$
	and a faithful $H$-grading,
	we can still leverage the equation $\qdim{\Omega_g} \cdot \lvert H \rvert = \qdim{\Omega_\mathcal{C}}$ if this subgroup is known.
\end{remark}

\subsubsection{The $G$-crossed braiding and encirclings}

\begin{definition}
	Let $h, g \in G$.
	The double braiding of two objects $A \in \mathcal{C}_h$ and $B \in \mathcal{C}_g$ is defined as:
	\begin{align}
		\beta_{A,B} &\colon A \otimes B \to \acts{hgh^{-1}}{A} \otimes \acts{h}{B}\\\nonumber
		\beta_{A,B} &\coloneqq (\eta_B \otimes 1_B) \circ c_{B,A} \circ c_{A,B}
	\end{align}
	\fxwarning{Weird alignment}
	$\eta_B$ is the unique coherence of the $G$-action.
	\fxwarning{maybe just work it out}
	Note that $\beta_{A,B}$ is, up to $G\times$-coherences,
	an automorphism if $h = g = e$.
\end{definition}

\begin{definition}
	Let $g \in G$.
	The \textbf{encircling} of an object $A \in \mathcal{C}_e$ by an object $B \in \mathcal{C}_g$ is defined by the following partial trace:
	\begin{align}
		\Delta_{A,B} &\colon A \to \acts{g}{A}\\\nonumber
		\Delta_{A,B} &\coloneqq \tr_B((1_A \otimes \epsilon_B) \circ \beta_{A,B}) \colon A \to \acts{g}{A}
	\end{align}
	$B$ can be generalised to ($g$-graded) elements of the fusion algebra straightforwardly,
	and we will freely make use of this generalisation.
\end{definition}

\begin{lemma}
	\label{lem:graded sliding lemma}
	Assume $A \in \mathcal{C}_e$, $h, g \in G$ and $B \in \mathcal{C}_g$.
	There is a \textbf{graded sliding lemma} for $G\times$-BSFCs,
	which changes the grade of the encircling:
	\begin{equation}
		\Delta_{A,\Omega_h} \otimes 1_B
		= (\eta_A \otimes 1_B) \circ c_{B,A} \circ \left(\epsilon_B \otimes \Delta_{A,\Omega_{g^{-1}h}} \right) \circ c_{A,B}
		\label{eq:graded sliding lemma}
	\end{equation}
	\fxwarning{Coherences inserted here?}
	The encirclement may be arbitrarily linked or knotted,
	as far as this is possible with regard to the grading.
\end{lemma}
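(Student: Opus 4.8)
The plan is to read both sides as values of labelled diagrams in the sense of Definition~\ref{def:labelled diagram} and to recognise the asserted equality as the local move of sliding the encircling loop once over the $B$-strand. On the left the $A$-strand carries a single $\Omega_h$-coloured encircling loop while the $B$-strand runs alongside it without meeting it; on the right the composite $c_{B,A}\circ(\epsilon_B\otimes-)\circ c_{A,B}$ is, up to the coherences $\epsilon$ and $\eta$, the $G$-crossed double braiding $\beta_{A,B}$ with the $\Omega_{g^{-1}h}$-encircling inserted between its two crossings, which is precisely the same picture after the $B$-strand has been pushed once across the encircling loop. Since the evaluation of Definition~\ref{def:labelled diagram} is assembled from local pieces, I would first reduce to verifying the identity inside a ball around the crossing region; any further knotting of the encircling loop, or linking with other strands, happens outside this ball and is identical on the two sides, which is exactly what ``arbitrarily linked or knotted'' is meant to cover. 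I would also note at the outset that $\{g:\mathcal{C}_g\not\simeq 0\}$ is a subgroup of $G$, so that $\Omega_h$ and $\Omega_{g^{-1}h}$ vanish simultaneously and one may assume both are non-zero.

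Next I would expand, using semisimplicity, $\Omega_h=\bigoplus_{Y\in\mathcal{O}(\mathcal{C}_h)}\qdim{Y}\,Y$ and $\Omega_{g^{-1}h}=\bigoplus_{Z\in\mathcal{O}(\mathcal{C}_{g^{-1}h})}\qdim{Z}\,Z$; since $\Delta$, $\beta$ and the morphism pairing are all built from partial traces and composition, both sides are linear in these decompositions and it suffices to compare summands. For a fixed simple $Z\in\mathcal{C}_{g^{-1}h}$, I would apply naturality of the crossed braiding $c$ in both variables together with the $G$-action coherences $\eta$ and $\epsilon$, inserted exactly as in Figures~\ref{fig:braiding}, \ref{fig:braiding with sheet the other way} and~\ref{fig:G-coherences}, to move the partial trace past the two braidings. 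This should rewrite the conjugated $Z$-encircling as an encircling of the $A$-strand by a loop that is Hopf-linked once with the $B$-strand, with all remaining coherences collapsing into the single isomorphism $\eta(g,g^{-1}h)_A$, which is the $\eta_A$ of the statement. The non-formal point here is that the grade of this loop is controlled by left multiplication by $g=\deg B$, not by the conjugation action: once the $B$-strand has been pushed through, the loop effectively carries the object of $\mathcal{C}_g\otimes\mathcal{C}_{g^{-1}h}\subset\mathcal{C}_h$ obtained by fusing the colour of $B$ into it, which is what turns $\Omega_{g^{-1}h}$ into $\Omega_h$.

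Finally I would absorb the $B$-strand into the loop using the defining absorption property of Kirby colours: for a simple $Z\in\mathcal{C}_g$ one has $Z\otimes\Omega_{g^{-1}h}\cong\qdim{Z}\cdot\Omega_h$, the $G$-graded refinement of $X\otimes\Omega_{\mathcal{C}}\cong\qdim{X}\,\Omega_{\mathcal{C}}$, which is a short computation in the fusion algebra using that $\qdim{-}$ is a character of it and that the morphism pairing is nondegenerate (compare \cite[Proposition~8.20]{OnFusionCategories} for the Frobenius--Perron analogue). Summing the summand-wise identities against the weights $\qdim{Z}$ of $\Omega_{g^{-1}h}$, the factors $\qdim{Z}$ produced by the absorption are matched exactly by the weights $\qdim{Y}$ in $\Omega_h=\bigoplus_Y\qdim{Y}\,Y$, so no spurious scalar remains and the two sides coincide. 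I expect the coherence bookkeeping of the middle step to be the only real obstacle: one must thread the $\eta$'s and $\epsilon$'s through the two crossed braidings in the correct order, check that they assemble precisely into $\eta(g,g^{-1}h)_A$ and $\epsilon_B$, and in particular confirm that the grade transforms by $k\mapsto gk$; the rest is semisimplicity, bilinearity and the elementary fusion-algebra identity.
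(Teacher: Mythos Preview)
Your approach is essentially the same as the paper's: both fuse the $B$-strand (or rather $B^*$) with the Kirby-coloured loop and use the graded absorption identity in the fusion algebra to shift the degree between $h$ and $g^{-1}h$. The paper's proof is a two-line sketch that points to the ungraded analogues \cite[Corollary 3.5]{Kirillov2011:Stringnet}, \cite[Lemmas 3.3 and 3.4]{BaerenzBarrett2016Dichromatic} for the mechanism and to \cite[Lemma 6.6.1]{TuraevHFTAndGCats2000} for the graded absorption; your write-up expands the same idea and is more explicit about the $G$-coherences, which the paper itself leaves as a warning flag.

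One point worth cleaning up: in your final paragraph you use $Z$ first for simples in $\mathcal{C}_{g^{-1}h}$ (summands of the encircling colour) and then, without warning, for simples in $\mathcal{C}_g$ (summands of $B$). The sentence about ``summing the summand-wise identities against the weights $\qdim{Z}$ of $\Omega_{g^{-1}h}$'' and matching them with the weights $\qdim{Y}$ of $\Omega_h$ does not type-check as written, and as stated it is not clear why no spurious factor of $\qdim{B}$ survives. The clean version is the one the paper gives: fuse $B^*\in\mathcal{C}_{g^{-1}}$ into the $\Omega_h$-loop; since $\sum_{W\in\mathcal{O}(\mathcal{C}_h)}\qdim{W}\,(B^*\otimes W)=\qdim{B}\,\Omega_{g^{-1}h}$ in $\Co[\mathcal{C}]$ and the \emph{same} factor $\qdim{B}$ appears when you un-fuse on the other side of the crossing, the scalars cancel and only the grade of the loop changes. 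Once the two roles of $Z$ are disentangled this way, your argument and the paper's coincide.
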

\begin{proof}
	In analogy to usual proofs of sliding lemmas
	(e.g. \cite[Corollary 3.5]{Kirillov2011:Stringnet} and \cite[Lemmas 3.3 and 3.4]{BaerenzBarrett2016Dichromatic}),
	\fxwarning{
		I prove something slightly different,
		so maybe Bruce's Lemma 6.5 from string diagrams
		or Altschüler-Bruguières are better sources.
	}
	$B^*$ and $\Omega_h$ are fused to a single strand,
	but since $B \in \ob \mathcal{C}_{g}$,
	this strand has to be labelled $\Omega_{g^{-1}h}$.
	We have used \cite[Lemma 6.6.1]{TuraevHFTAndGCats2000}.
\end{proof}
\begin{remark}
	To see why the previous lemma is called ``sliding lemma'',
	revisit Figure \ref{fig:k-2-slides},
	which contains graphical representations of the two sides of \eqref{eq:graded sliding lemma}
	if the 3-handle $h_{3A}$ is labelled with $g^{-1}h$,
	$h_{3B}$ with $h$,
	$h_{2a}$ with $B$,
	and the gray area is replaced by a ribbon labelled with $A$.
	\fxwarning{It might be better to make a new diagram instead?}
\end{remark}

\begin{definition}[{Well-known, e.g. \cite[Definition 2.41]{BaerenzBarrett2016Dichromatic}}]
	Let $\mathcal{D}$ be a braided fusion category.
	Then $\mathcal{D}'$ is the full symmetric subcategory spanned by trivially braiding objects,
	called the \textbf{symmetric centre}.

	Let $A$ be an object in $\mathcal{D}$.
	Then $A'$ is defined (up to isomorphism)
	to be the maximal subobject of $A$ in $\mathcal{D}'$,
	and $\tau_A\colon A \to A$ the idempotent defined as projection onto $A'$ followed by inclusion into $A$
	(not depending on the choice of $A'$).
	If $\tau_A = 1_A$,
	or equivalently $A \cong A'$,
	or $A \in \ob \mathcal{D}'$,
	then is $A$ is said to be \textbf{transparent}.
\end{definition}
\begin{lemma}[{Well-known, e.g. \cite[Lemma 2.46]{BaerenzBarrett2016Dichromatic}}]
	\label{lem:killing}
	Let $\mathcal{D}$ be a ribbon fusion category and $A$ an object therein.
	Then $\Delta_{A,\Omega_\mathcal{D}} = \tau_A \cdot \qdim{\Omega_\mathcal{D}}$.
	In particular, let $X$ be a simple object in $\mathcal{D}$.
	Then $\Delta_{X,\Omega_\mathcal{D}} = 1_X \cdot \qdim{\Omega_\mathcal{D}}$ iff $X$ is transparent,
	and 0 otherwise.

	This lemma is called the \textbf{killing lemma},
	since nontransparent $X$ are ``killed off'' by an encircling with $\Omega_\mathcal{D}$.
\end{lemma}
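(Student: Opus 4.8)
The plan is to reduce to the case of a single simple object and then treat the transparent and non‑transparent cases separately.

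First I would use semisimplicity together with naturality. The operator $\Delta_{-,\Omega_\mathcal{D}}$ is assembled from partial traces of the natural double braiding, hence is a natural endotransformation of $\mathrm{Id}_\mathcal{D}$; in particular it is additive in its first argument and acts as a scalar on each isotypic component. Decomposing $A = \bigoplus_i X_i$ into simples and observing that $\mathcal{D}'$ is semisimple, so that the maximal transparent subobject $A'$ is exactly the sum of the transparent $X_i$ and $\tau_A = \bigoplus_{i\,:\,X_i\text{ transparent}} 1_{X_i}$, it suffices to prove the ``in particular'' statement: for $X$ simple one has $\Delta_{X,\Omega_\mathcal{D}} = \lambda_X 1_X$ with $\lambda_X = \qdim{\Omega_\mathcal{D}}$ when $X$ is transparent and $\lambda_X = 0$ otherwise.

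For transparent $X$ the double braiding $\beta_{X,Y} = 1_{X \otimes Y}$ for every $Y$, so $\Delta_{X,\Omega_\mathcal{D}} = \sum_{Y \in \mathcal{O}(\mathcal{D})} \qdim{Y}\,\tr_Y(\beta_{X,Y}) = \sum_Y \qdim{Y}\,\tr_Y(1_{X \otimes Y}) = \sum_Y \qdim{Y}^2\, 1_X = \qdim{\Omega_\mathcal{D}}\, 1_X$, which is a routine computation. The converse is the step I expect to be the crux. For it I would invoke the sliding lemma, namely the special case of Lemma \ref{lem:graded sliding lemma} in which $\mathcal{D}$ is viewed as a $G\times$-BSFC with trivial group (so all $G$-coherences are identities and $\Omega_\mathcal{D}$ is the undgraded Kirby colour); concretely, for every object $B$ it gives $\Delta_{X,\Omega_\mathcal{D}} \otimes 1_B = c_{B,X} \circ (1_B \otimes \Delta_{X,\Omega_\mathcal{D}}) \circ c_{X,B}$. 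Substituting $\Delta_{X,\Omega_\mathcal{D}} = \lambda_X 1_X$ and simplifying the right-hand side to $\lambda_X\, c_{B,X} \circ c_{X,B} = \lambda_X\, \beta_{X,B}$ yields $\lambda_X 1_{X \otimes B} = \lambda_X\, \beta_{X,B}$. Hence if $\lambda_X \neq 0$ then $\beta_{X,B} = 1_{X \otimes B}$ for all $B$, i.e. $X \in \mathcal{D}'$ is transparent; contrapositively, a non-transparent simple $X$ has $\lambda_X = 0$.

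Combining the two cases gives the asserted scalar values for simple objects (and incidentally shows $\lambda_X \in \{0, \qdim{\Omega_\mathcal{D}}\}$ in all cases), and the reduction of the first paragraph then upgrades this to $\Delta_{A,\Omega_\mathcal{D}} = \tau_A \cdot \qdim{\Omega_\mathcal{D}}$ for arbitrary $A$. Note that no nonvanishing hypothesis on $\qdim{\Omega_\mathcal{D}}$ is used: the transparent case is a bare computation, and in the non-transparent case one divides by $\lambda_X$, not by $\qdim{\Omega_\mathcal{D}}$. The only delicate points are the bookkeeping of the (trivial) $G$-coherences when borrowing the sliding lemma, and checking that ``trivially braiding'' in the definition of $\mathcal{D}'$ is exactly the condition $\beta_{X,-} = \mathrm{id}$ that the argument produces.
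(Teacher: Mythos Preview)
The paper does not supply a proof of this lemma; it simply marks it as well-known and cites \cite[Lemma 2.46]{BaerenzBarrett2016Dichromatic}. There is therefore nothing in the paper to compare against directly.

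Your argument is correct and is one of the standard routes to the killing lemma. The reduction to simples via naturality and semisimplicity is fine, the transparent case is the expected direct computation, and the non-transparent case is exactly the sliding-lemma trick: if the encircling acts by a nonzero scalar, sliding any $B$ through the $\Omega_\mathcal{D}$-loop forces $\beta_{X,B}=1_{X\otimes B}$. Your use of Lemma~\ref{lem:graded sliding lemma} with trivial $G$ is legitimate and not circular, since that lemma is proved earlier in the paper by fusing $B^*$ into the Kirby-coloured strand and does not invoke the killing lemma. The only cosmetic point is that you do not actually need the $G$-crossed machinery at all here: the ungraded sliding identity you use is the classical one for ribbon fusion categories (e.g.\ the handle-slide property of $\Omega_\mathcal{D}$), so you could cite that directly rather than specialising the graded version.
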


\begin{lemma}
	\label{lem:killing G-crossed}
	To the knowledge of the author,
	this generalisation of the killing lemma has not been discussed in the literature before.

	Assume that $g \in G$ and $\mathcal{C}_g \not\simeq 0$.
	Then the \textbf{$G\times$-killing lemma} holds for any $A \in \ob \mathcal{C}_e$:
	\begin{equation}
		\Delta_{A, \Omega_{g^{-1}}} \circ \Delta_{A, \Omega_g}
		= \tau_A \cdot \qdim{\Omega_e}^2
	\end{equation}
\end{lemma}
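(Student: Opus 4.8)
The plan is to reduce the composite, by graded handle slides, to something the ordinary killing lemma (Lemma~\ref{lem:killing}) can evaluate inside the trivially graded part $\mathcal{C}_e$. Observe first that, since $A \in \mathcal{C}_e$, we also have $\acts{g}{A} \in \mathcal{C}_{geg^{-1}} = \mathcal{C}_e$ and $\acts{g^{-1}}{(\acts{g}{A})} \cong \acts{g^{-1}g}{A} \cong A$ via the coherences $\eta$ and $\epsilon$; hence, once these are inserted, $\Delta_{A,\Omega_{g^{-1}}} \circ \Delta_{A,\Omega_g}$ is a genuine endomorphism of $A$ living in the ribbon fusion category $\mathcal{C}_e$ (the crossed braiding restricts to an honest braiding there, so Lemma~\ref{lem:killing} will apply). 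Graphically it is the ribbon $A$ carrying two concentric encircling loops, an inner one labelled $\Omega_g$ and an outer one labelled $\Omega_{g^{-1}}$, with the segment of $A$ between them dressed by a $g$-labelled sheet that is annihilated above the outer loop.

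Next I would manipulate the two loops with the graded sliding lemma (Lemma~\ref{lem:graded sliding lemma}), which is precisely the statement that an encircling loop may be slid across a neighbouring strand or sheet at the cost of changing its grade. Sliding the outer $\Omega_{g^{-1}}$-loop across the interstitial $g$-sheet and then past the inner $\Omega_g$-loop lets the two loops be merged and the sheet removed: because the grades $g$ and $g^{-1}$ are mutually inverse, the surviving encircling has net grade $e$, and the $\eta/\epsilon$ coherences produced along the way cancel since the sheet decorations compose to the identity. A closed loop discarded in this process contributes a scalar $\qdim{\Omega_{g^{-1}}}$, which equals $\qdim{\Omega_e}$ by Lemma~\ref{lem:global dimension degree} since $\mathcal{C}_g \not\simeq 0$. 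One is thus left with $\qdim{\Omega_e}\cdot\Delta_{A,\Omega_e}$ (equivalently, with $\Delta_{A,\Omega_e} \circ \Delta_{A,\Omega_e}$) --- the $G$-crossed analogue of the familiar reduction of a double encircling to a single one; Figure~\ref{fig:k-2-slides} depicts the slide in question, the slid loop playing the role of the red $3$-handle.

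It remains to evaluate $\Delta_{A,\Omega_e}$, for which I invoke the ordinary killing lemma (Lemma~\ref{lem:killing}) applied to the ribbon fusion category $\mathcal{C}_e$ and the object $A \in \ob\mathcal{C}_e$: $\Delta_{A,\Omega_e} = \tau_A \cdot \qdim{\Omega_e}$, with $\tau_A$ the projection onto the symmetric centre of $\mathcal{C}_e$. Multiplying by the scalar $\qdim{\Omega_e}$ from the previous step yields $\tau_A \cdot \qdim{\Omega_e}^2$ (or, by idempotency of $\tau_A$, the same value as $\Delta_{A,\Omega_e} \circ \Delta_{A,\Omega_e} = \tau_A^2\,\qdim{\Omega_e}^2$), which is the claimed identity.

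The main obstacle is the middle step: pinning down, inside the three-dimensional labelled-diagram calculus of Section~\ref{sec:labelled diagrams}, the exact sequence of graded slides carrying the concentric $\Omega_g$/$\Omega_{g^{-1}}$ configuration with its interstitial $g$-sheet to a single $\Omega_e$-encircling, and checking that the $G$-action coherences together with the dimension factors combine to leave exactly one spare factor of $\qdim{\Omega_e}$ rather than a grade-shifted or differently normalised result. A secondary point worth recording is that $\tau_A$ is unambiguous here precisely because it is the projection onto the symmetric centre of $\mathcal{C}_e$ and $A$ does lie in $\mathcal{C}_e$.
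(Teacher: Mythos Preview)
Your approach is essentially the same as the paper's: represent the composite as a cylinder-shaped sheet around the $A$-strand, use the graded sliding lemma (Lemma~\ref{lem:graded sliding lemma}) to slide the $\Omega_{g^{-1}}$-encircling off the $\Omega_g$-encircling, pick up the scalar $\qdim{\Omega_{g^{-1}}}=\qdim{\Omega_e}$, and apply the ordinary killing lemma (Lemma~\ref{lem:killing}) to the surviving $\Omega_e$-encircling. The paper's proof is just a terser version of what you wrote; your hedging in the last paragraph is unnecessary, since the slide you describe is precisely Lemma~\ref{lem:graded sliding lemma}.
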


\begin{proof}
	The left hand side is represented diagrammatically by a closed cylinder encircling an $A$-labelled ribbon,
	with the sheet labelled $g$.
	The $\Omega_{g^{-1}}$-encircling can be slid off the $\Omega_g$-encircling,
	\fxwarning{Cite the result that we can insert a complete $\Omega_e$.
		Insertion lemma?}
	yielding factor $\qdim{\Omega_{g^{-1}}} = \qdim{\Omega_g}$.
	This changes the grade of the remaining encircling to $\Omega_e$,
	thus we can apply the killing lemma \ref{lem:killing}.
\end{proof}
\begin{corollary}
	\label{cor:killing iso}
	Let $X \in \mathcal{C}_e$ be simple.
	Then encircling $\Delta_{X, \Omega_g}$ is an isomorphism $X \to \acts{g}{X}$ iff $X$ is transparent
	(the inverse being $\Delta_{X, \Omega_{g^{-1}}}\cdot \qdim{\Omega_e}^{-2}$),
	and 0 otherwise.
	This justifies generalising the name ``killing lemma''.
\end{corollary}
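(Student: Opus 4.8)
The plan is to obtain the corollary from the $G\times$-killing lemma (Lemma~\ref{lem:killing G-crossed}) together with Schur's lemma. Throughout, note that the crossed braiding restricted to $\mathcal{C}_e$ is a genuine braiding (up to the coherence $\epsilon$), so $\mathcal{C}_e$ is a ribbon fusion category and ``transparent'' and $\tau_X$ make sense there; for a \emph{simple} $X\in\mathcal{C}_e$ the idempotent $\tau_X$ is $1_X$ if $X$ is transparent and $0$ otherwise, and $\acts{g}{X}$ is again a simple object of $\mathcal{C}_e$. Also $\qdim{\Omega_e}\neq 0$ (by Lemma~\ref{lem:global dimension degree} and the nonvanishing of the global dimension of a fusion category over $\Co$), and, as in the proof of Lemma~\ref{lem:killing G-crossed}, $\Delta_{X,\Omega_{g^{-1}}}$ is understood as the $\Omega_{g^{-1}}$-encircling of $\acts{g}{X}$ composed with the coherence $\acts{g^{-1}}{(\acts{g}{X})}\cong X$, hence as a morphism $\acts{g}{X}\to X$.

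If $X$ is transparent, then $\tau_X=1_X$ and Lemma~\ref{lem:killing G-crossed} reads $\Delta_{X,\Omega_{g^{-1}}}\circ\Delta_{X,\Omega_g}=\qdim{\Omega_e}^2\cdot 1_X$. Hence $\Delta_{X,\Omega_g}\neq 0$, and a nonzero morphism between the simple objects $X$ and $\acts{g}{X}$ is an isomorphism; since $\qdim{\Omega_e}^{-2}\,\Delta_{X,\Omega_{g^{-1}}}$ is a left inverse, it is the two-sided inverse, yielding both the isomorphism and the claimed formula for it. Conversely, assume $X$ is not transparent, so $\tau_X=0$ and Lemma~\ref{lem:killing G-crossed} gives $\Delta_{X,\Omega_{g^{-1}}}\circ\Delta_{X,\Omega_g}=0$. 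By Schur, $\Delta_{X,\Omega_g}$ is either $0$ (as claimed) or an isomorphism; if the latter, then $\Delta_{X,\Omega_{g^{-1}}}=0$. But the diagram computing $\Delta_{X,\Omega_{g^{-1}}}$ is the mirror image of the one computing $\Delta_{X,\Omega_g}$, up to applying the pivotal autoequivalence $F(g)$ and $G\times$-coherences: reversing the orientation of the encircling loop relabels it from $\Omega_g$ to $\Omega_g^*\cong\Omega_{g^{-1}}$ (Remark~\ref{rem:graded kirby colour dual}), reversing the orientation of its sheet relabels it from $g$ to $g^{-1}$, and $F(g)$ together with the coherences brings the result into the form of an $\Omega_{g^{-1}}$-encircling of $\acts{g}{X}$ (using $\acts{g}{\Omega_{g^{-1}}}\cong\Omega_{g^{-1}}$ and the compatibility of $F$ with the crossed braiding). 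Since the spherical reflection and $F(g)$ induce bijections of the relevant Hom-spaces, $\Delta_{X,\Omega_{g^{-1}}}\neq 0$ as soon as $\Delta_{X,\Omega_g}\neq 0$, contradicting $\Delta_{X,\Omega_{g^{-1}}}=0$. Therefore $\Delta_{X,\Omega_g}=0$.

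The transparent direction is essentially immediate. The delicate part is the non-transparent case, since Lemma~\ref{lem:killing G-crossed} alone only yields the vanishing of the composite $\Delta_{X,\Omega_{g^{-1}}}\circ\Delta_{X,\Omega_g}$, which is a priori also compatible with $\Delta_{X,\Omega_g}$ being a nonzero isomorphism and $\Delta_{X,\Omega_{g^{-1}}}$ being $0$. The input that is genuinely needed is that an $\Omega_g$-encircling and an $\Omega_{g^{-1}}$-encircling are mirror images of one another (up to the $G$-action and $G\times$-coherences) and hence vanish simultaneously; carefully tracking orientations, sheet labels and coherences through this reflection is where I expect the actual work to lie. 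An alternative would be to bypass this and adapt the standard averaging proof of the braided killing lemma~\ref{lem:killing} directly to $\Omega_g$, using the graded sliding lemma~\ref{lem:graded sliding lemma} to slide an $\Omega_g$-loop past itself; I expect this to be comparable in effort.
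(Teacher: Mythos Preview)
The paper states this corollary without proof, treating it as an immediate consequence of Lemma~\ref{lem:killing G-crossed}. Your argument is essentially correct and in fact more careful than the paper: you correctly observe that for non-transparent $X$, Lemma~\ref{lem:killing G-crossed} alone only gives $\Delta_{\acts{g}{X},\Omega_{g^{-1}}}\circ\Delta_{X,\Omega_g}=0$, which a priori does not force $\Delta_{X,\Omega_g}=0$ via Schur. Your remedy --- relating $\Delta_{X,\Omega_g}$ and $\Delta_{\acts{g}{X},\Omega_{g^{-1}}}$ by a spherical reflection together with the autoequivalence $F(g)$, so that they vanish simultaneously --- is the right idea and is a point the paper glosses over.

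One small comment on the execution: the phrase ``reversing the orientation of the encircling loop relabels it from $\Omega_g$ to $\Omega_{g^{-1}}$'' is by itself only a relabelling, not an operation on morphisms. The actual content is that the $180^\circ$ rotation (pivotal dual) sends $\Delta_{X,\Omega_g}$ to $\Delta_{(\acts{g}{X})^*,\Omega_{g^{-1}}}$ up to coherences, which is nonzero iff the original is; then naturality of the encircling in the first variable (it is a partial trace of the natural crossed double braiding) and the assumption $X\cong\acts{g}{X}$ (which holds whenever $\Delta_{X,\Omega_g}\neq 0$) let you replace $(\acts{g}{X})^*$ by $X^*$, and a further application of $F(g)$ or duality brings you to $\Delta_{\acts{g}{X},\Omega_{g^{-1}}}$. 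You are right that this bookkeeping is where the actual work lies, but none of it is problematic.
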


\fxwarning{Some extra blabla subsection missing about $G$-crossed coherences?}

\section{The invariant}
\label{sec:invariant}

In this section, we will define an invariant of closed, smooth, oriented 4-manifolds
by labelling Kirby diagrams (Section \ref{sec:Kirby calculus})
with data from $G\times$-BSFCs (Sections \ref{sec:Gx intro} and \ref{sec:labelled diagrams}),
and show its independence of the chosen Kirby diagram by means of the lemmas from Section \ref{sec:sliding lemmas}.

Fix again a $G\times$-BSFC $\mathcal{C}$,
a manifold $M$,
and a Kirby diagram $K$ for $M$.
Recall from Definition \ref{def:kirby diagram to planar diagram}
that $K$ gives rise to an unlabelled planar diagram $\mathcal{D}$.
The 1-, 2- and 3-handles of $K$ can be labelled appropriately to yield a labelling of the derived diagram $\mathcal{D}$.
In detail, we require the following data:
\begin{definition}[Labelling of Kirby diagrams]
	Denote by $K_j$ the set of $j$-handles.
	A labelling of $K$ by $\mathcal{C}$ is specified by:
	\begin{itemize}
		\item A function $g\colon K_3 \to G$,
		\item a function $X\colon K_2 \to \Co[\mathcal{C}]$ such that the derived labelling of $\mathcal{D}$ type-checks,
		\item a function $\iota\colon K_1 \to \mor \mathcal{C} \otimes \mor \mathcal{C}$ into type-checking dual morphism spaces:
			\[\iota(h_1) \in \langle \otimes_{(h_2,\pm) \incident h_1} X(h_2)^\pm \rangle \otimes \langle \otimes_{(h_2,\pm) \incident^{-1} h_1} X(h_2)^\mp \rangle\]
			Here, $\incident h_1$ denote the attached 2-handles,
			and $\incident^{-1}$ denotes the reverse cyclical ordering.
	\end{itemize}
\end{definition}
\begin{remark}
	Crucially, the handles of the Kirby diagrams receive labels,
	and not each individual element of the diagram.
	It is important that the different lines stemming from a single 2-handle are labelled with the same data,
	and the same is true for the different disks from a single 3-handle.
\end{remark}

\fxnote{Be careful where in the following process we need the coherences from $\mathcal{C}$,
	or the ones from the monoidal bicategory.
	This could be the key to finding where the cohomology class shows up.
}

The invariant is now defined as a sum of diagram evaluations over all possible 3-handle labellings,
assigning the appropriately graded Kirby colour to every 2-handle and the dual bases from Definition \ref{def:dual bases} to every 1-handle.
\fxwarning{Would be better if dual bases had a specific definition I could refer to}
\begin{definition}
	\label{def:invariant}
	Making full use of the generalisations from Remark \ref{rem:generalise labellings},
	the invariant assigned to a $G\times$-BSFC $\mathcal{C}$ is defined as:
	\begin{equation}
		I_\mathcal{C}(K)
			\coloneqq \sum_{g\colon K_3 \to G}
			\left<K\left(g, h_2 \mapsto \Omega_{\deg(h_2)}, \bigotimes_{h_1} \sum_i \phi_{h_1,i} \otimes \tilde{\phi}_{h_1,i} \right)\right>
			\qdim{\Omega_\mathcal{C}}^{\lvert K_1 \rvert - \lvert K_2 \rvert}
	\end{equation}
	By abuse of notation, we have used $\deg(h_2) \coloneqq \prod_{(g,\pm) \in \delta h_2} g^\pm$,
	which is the degree of a type-checking object labelling a 2-handle $h_2$.
	$\sum_i \phi_{h_1,i} \otimes \tilde{\phi}_{h_1,i}$ denotes the sum over the dual bases of $\langle s(h_1) \rangle$ and  $\langle s(h_1)^* \rangle$.
	Writing out the sums explicitly is possible:
	\begin{align}
		I_\mathcal{C}(K)
			&= \sum_{g\colon K_3 \to G
			  }
			  \sum_{\substack{X\colon K_2 \to \mathcal{O}(\mathcal{C})
			     \\ \deg(X(h_2)) = \deg(h_2)
			  }}
			  \sum_{\substack{\phi_{h_1, i}, \tilde{\phi}_{h_1,i}
			     \\ \forall h_1 \in K_1
			  }}
			  \left< K\left(g, X, \bigotimes_{h_1} \phi_{h_1,i} \otimes \tilde{\phi}_{h_1,i}\right) \right>
			  \nonumber\\
			&\qquad \cdot \qdim{\Omega_\mathcal{C}}^{\lvert K_1 \rvert - \lvert K_2 \rvert}
			  \cdot \prod_{h_2 \in K_2} \qdim{X(h_2)}
		\label{eq:invariant long}
	\end{align}
	\fxwarning{Does this still look ugly? Inspiration from dichro?}
\end{definition}

\begin{theorem}
	\label{thm:Invariance}
	For a given manifold $M$,
	$I_\mathcal{C}$ does not depend on the choice of Kirby diagram $K$,
	in other words, it is an invariant of smooth, closed, oriented 4-manifolds.
\end{theorem}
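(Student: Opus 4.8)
The plan is to invoke Theorem~\ref{thm:handle moves}: any two handle decompositions of $M$ are connected by a finite sequence of handle-attachment isotopies, reorderings of same-index attachments, and handle-pair cancellations. Combining this with Lemma~\ref{lem:establish Kirby conventions} and maintaining a single $0$-handle and $4$-handle throughout (Theorem~\ref{thm:One 0-handle and one n-handle}), it suffices to show that $I_\mathcal{C}$ is unchanged under each of: (i) ambient isotopies of the diagram in $\remainingboundary h_0$ that respect the Kirby conventions; (ii) reorderings of same-index attachments; (iii) the elementary $j$-$k$-slides of Section~\ref{sec:slides} and the $3$-$\infty$-move, into which an isotopy that momentarily violates the conventions decomposes; and (iv) the $1$-$2$- and $2$-$3$-handle cancellations. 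Throughout, the normalization factor $\qdim{\Omega_\mathcal{C}}^{\lvert K_1\rvert-\lvert K_2\rvert}$ and the $\lvert G\rvert$-fold multiplicity implicit in the sum over $g\colon K_3\to G$ in Definition~\ref{def:invariant} depend only on the handle counts, so the cancellations require tracking how those counts change.

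The first and most substantial task will be to show that the bare evaluation $\langle\mathcal{D}(g,X,\iota)\rangle$ is well defined --- independent of the generic projection and of the order in which the $G$-coherences of Definition~\ref{def:labelled diagram} are inserted --- and invariant under ambient isotopy of regular diagrams. This is a coherence statement for the sheet-and-fold-line calculus, to be established in the style of the move calculus for $\pi$-graphs in \cite{TuraevHFTAndGCats2000} and the graphical calculus of semistrict $3$-categories with duals \cite{Schaumann:GrayCats}: one lists the elementary local moves (Reidemeister-type moves for ribbon/ribbon and ribbon/fold-line crossings, together with the moves for fold lines themselves) and checks each against the $G\times$-BSFC axioms --- naturality of the crossed braiding, the hexagon/heptagon identities, coherence of $(F,\eta,\epsilon)$, and sphericality. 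Granting this, (i) and (ii) are immediate, and the slides not involving $3$-handles follow at once: a $2$-$1$-slide leaves the derived labelled diagram unchanged up to isotopy; a $1$-$1$-slide band-sums one foot of a $1$-handle into another and is absorbed by the basis-independence built into Definition~\ref{def:dual bases}; and a $2$-$2$-slide band-sums an $\Omega_{\deg h_{2a}}$-coloured component with $h_{2b}$, which is exactly the graded sliding Lemma~\ref{lem:graded sliding lemma} applied with trivial grade on the sliding strand.

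The $3$-handle moves are where the calculus was tailored to Kirby diagrams. A $3$-$1$-slide pushes a disk through $\remainingboundary h_1$; the coherences $\eta,\epsilon$ that Definition~\ref{def:labelled diagram} inserts at fold-line/ribbon crossings (Figure~\ref{fig:G-coherences}) are precisely what the move demands, so the identity reduces to coherence of the $G$-action. A $3$-$2$-slide is the geometric incarnation of the graded sliding Lemma~\ref{lem:graded sliding lemma} (cf.\ the remark following it and Figure~\ref{fig:k-2-slides}): dragging the fold line of a $3$-handle sheet over an $\Omega_{\deg h_2}$-coloured loop multiplies that sheet's grade by $(\deg h_2)^{-1}$, so after re-indexing the sum over $g\colon K_3\to G$ the two evaluations coincide. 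A $3$-$3$-slide records that the composite of two sheet actions equals the action of the product, i.e.\ the coherence $\eta$ once more. Finally, by Remark~\ref{rem:3-infty-move}, the $3$-$\infty$-move is a $3$-$3$-slide against an auxiliary outermost $3$-handle together with the fact that creating and then deleting such a handle does not change the evaluation: acting by $F(g)$ on a whole closed diagram, whose value lies in $\mathcal{C}(\mathcal{I},\mathcal{I})=\Co$, changes nothing since $F(g)$ is monoidal, and the $\lvert G\rvert$ and $\qdim{\Omega_\mathcal{C}}$ bookkeeping of the auxiliary handle cancels between creation and deletion.

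It remains to treat the cancellations. After sliding everything off a cancelling $1$-$2$-pair, the local picture is a single $\Omega_e$-coloured ribbon joining the two feet of the $1$-handle, capped by dual bases of $\langle\Omega_e\rangle\cong\Co$; its evaluation is $\dim\langle\Omega_e\rangle=1$, and since $\lvert K_1\rvert$ and $\lvert K_2\rvert$ both decrease by one the normalization is unchanged, so $I_\mathcal{C}$ is preserved. For a cancelling $2$-$3$-pair, the $2$-handle becomes an unknotted $0$-framed loop capped by the cancelling disk; the disk's label $g$ is effectively a free parameter summed over in $I_\mathcal{C}$, and for each $g$ the loop is coloured $\Omega_g$ and the capped loop contributes $\qdim{\Omega_g}$, which vanishes unless $\mathcal{C}_g\not\simeq 0$ and then equals $\qdim{\Omega_e}$ by Lemma~\ref{lem:global dimension degree}. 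Summing over $g$ gives $\qdim{\Omega_\mathcal{C}}$ (or $\lvert H\rvert\qdim{\Omega_e}$ when the grading is faithful only on a subgroup $H\le G$), which exactly cancels the factor $\qdim{\Omega_\mathcal{C}}$ that the normalization gains because $\lvert K_2\rvert$ decreases by one; hence $I_\mathcal{C}$ is again preserved. The main obstacle is therefore the coherence theorem for the three-dimensional sheet calculus underlying the second paragraph; by contrast, every handle move carrying genuine algebraic content --- the $3$-$2$-slide and the $2$-$3$-cancellation --- has that content already isolated in the sliding and dimension lemmas of Section~\ref{sec:sliding lemmas}.
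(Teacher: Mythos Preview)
Your overall architecture matches the paper's exactly: reduce to isotopies, slides, and cancellations, and treat each move separately. The treatment of the $3$-$\infty$-move and the two cancellations is essentially the paper's argument. However, you have misassigned the algebraic content of several slides.

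The most concrete error is your $3$-$2$-slide. You claim it is ``the geometric incarnation of the graded sliding Lemma~\ref{lem:graded sliding lemma}'' and that the sheet's grade gets multiplied by $(\deg h_2)^{-1}$, requiring a re-indexing of the sum over $g$. This is not what happens: after the slide the $3$-handle attaches to the $2$-handle twice with opposite orientations, so $\deg h_2$ is unchanged, and the $3$-handle's own label $g(h_3)$ is simply carried along by the isotopy. The paper's Lemma~\ref{lem:invariance slides} records that the $3$-$2$-slide leaves the extracted diagram unchanged up to $G\times$-coherences --- no sliding lemma and no re-indexing are involved. The graded sliding lemma is used for the $2$-$2$-slide (where your parenthetical ``with trivial grade on the sliding strand'' is also off: the sliding strand is $B\in\mathcal{C}_g$ with $g$ arbitrary; it is the \emph{encircled} object $A$ that must lie in $\mathcal{C}_e$). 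Conversely, the re-indexing argument you attach to the $3$-$2$-slide is exactly what the $3$-$3$-slide needs: sliding $h_3$ over $h_3'$ replaces $g(h_3')$ by $g(h_3)g(h_3')$, and only the bijection $G\to G$ given by left multiplication rescues the sum.

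Two smaller gaps: the $1$-$1$- and $2$-$1$-slides are not ``unchanged up to isotopy'' or ``absorbed by basis independence''. After a $2$-$1$-slide the ribbon genuinely enters both attaching balls of the $1$-handle, so the dual-basis vectors $\alpha\otimes\tilde\alpha$ now act on a larger object; the paper handles both slides via the explicit identity of Corollary~\ref{cor:1-1-slide} (naturality of the projection onto $\mathcal{I}$), with $f$ the $1$-handle foot respectively the coevaluation. Finally, the paper also checks independence of the orientation choices on the attaching spheres (Lemma~\ref{lem:independence orientations}), which your outline omits.
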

\begin{proof}
	We have to prove the following lemmas about $I_\mathcal{C}$:
	\begin{itemize}
		\item Independence of orientation choices
			(Lemma \ref{lem:independence orientations}).
		\item Invariance under isotopies of the diagram
			(Lemma \ref{lem:invariance isotopies}).
		\item Invariance under handle slides
			(Lemma \ref{lem:invariance slides}).
		\item Invariance under handle cancellations
			(Lemma \ref{lem:invariance cancellations}).
	\end{itemize}
\end{proof}
\begin{lemma}
	\label{lem:independence orientations}
	The invariant does not depend on the choice of orientations of the attaching spheres.
\end{lemma}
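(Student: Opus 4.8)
The plan is to handle the three kinds of attaching spheres --- those of $1$-, $2$- and $3$-handles --- separately, and in each case to show that reversing a single orientation is absorbed either by a local consistency of the graphical calculus of Definition \ref{def:labelled diagram} or by a change of summation variable in the expanded formula \eqref{eq:invariant long}. The guiding principle used throughout is that in the graphical calculus a ribbon labelled $Y$ with a given orientation is identical to the same ribbon labelled $Y^{*}$ with the reversed orientation, and similarly a sheet labelled $h \in G$ with a given orientation is identical to the same sheet labelled $h^{-1}$ with the reversed orientation; both are built into the conventions fixing $\incident r$, $\incident p$ and the evaluation algorithm.

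First I would treat $3$-handles. Reversing the orientation of the attaching sphere of one $3$-handle $h_3$ flips the sign of every incidence pair $(s,\pm)$ in which its sheets $s$ occur; by Definition \ref{def:labelled diagram} this replaces $g(s)$ by $g(s)^{-1}$ in the typing constraint $\deg(X(r)) = \prod_{(s,\pm)\in\incident r} g(s)^{\pm}$, in the action of $g(s)$ on covered ribbons and points (step~1), and in the $G$-coherences inserted in steps~2 and~3 (Figures \ref{fig:braiding}, \ref{fig:braiding with sheet the other way} and \ref{fig:G-coherences}). Every one of these changes is exactly undone by substituting $g(h_3)\mapsto g(h_3)^{-1}$ for that one handle. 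Since $g\mapsto g^{-1}$ is a bijection of the set of maps $K_3\to G$ that preserves the degree constraints, re-indexing the outermost sum in \eqref{eq:invariant long} by this bijection leaves $I_\mathcal{C}(K)$ unchanged; the prefactors $\qdim{\Omega_\mathcal{C}}^{\lvert K_1\rvert-\lvert K_2\rvert}$ and $\prod_{h_2}\qdim{X(h_2)}$ do not involve the $3$-handle orientations at all.

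Next I would treat $2$-handles. Reversing the orientation of the attaching circle of a $2$-handle $h_2$ replaces $\deg(h_2)$ by $\deg(h_2)^{-1}$, so the prescribed label $\Omega_{\deg(h_2)}$ is replaced by $\Omega_{\deg(h_2)^{-1}}\cong\Omega_{\deg(h_2)}^{*}$ (Remark \ref{rem:graded kirby colour dual}), while simultaneously the orientation of the corresponding ribbon(s) is reversed; by the guiding principle the evaluation $\left<K(g,X,\iota)\right>$ is then unchanged summand by summand, and the tensor factors of the morphism spaces carrying the $\iota(h_1)$ are merely dualised in a way that cancels the dualisation of the ribbon label. In the expanded form \eqref{eq:invariant long} the inner sum over $X(h_2)\in\mathcal{O}(\mathcal{C}_{\deg(h_2)})$ becomes a sum over $\mathcal{O}(\mathcal{C}_{\deg(h_2)^{-1}})$, and re-indexing it by the dimension-preserving bijection $X\mapsto X^{*}$ once more leaves the total unchanged; equivalently, at the level of fusion-algebra labels the labelled diagram is literally the same.

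Finally, for $1$-handles: interchanging which of the two balls $D^3_{\pm}$ of $h_1$ is called $+$ swaps the two thickened points in the derived diagram and exchanges the two tensor factors $\langle s(h_1)\rangle$ and $\langle s(h_1)^{*}\rangle$ on which the dual-basis vector $\sum_i\phi_{h_1,i}\otimes\tilde\phi_{h_1,i}$ of Definition \ref{def:dual bases} lives. The fact I would need is that this vector is symmetric under the swap, i.e.\ $\sum_i\phi_{h_1,i}\otimes\tilde\phi_{h_1,i}=\sum_i\tilde\phi_{h_1,i}\otimes\phi_{h_1,i}$; this follows from sphericality, since the morphism pairing is symmetric up to the pivotal identification, so the dual-basis relation is insensitive to which factor is declared first. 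Granting this, the two derived labelled diagrams are identified and evaluate equally. I expect this last point --- the symmetry of the dual-basis vector under the swap, together with pinning down the precise conventions that make ``reverse orientation $=$ dualise the label'' hold on the nose, including the placement and direction of the $G$-coherences of Figures \ref{fig:braiding}--\ref{fig:G-coherences} --- to be the only genuinely delicate part; the rest is bookkeeping of signs and a change of summation variable.
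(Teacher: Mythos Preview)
Your proof is correct and follows essentially the same route as the paper: the paper also treats the three handle indices separately, re-indexing the $3$-handle sum by $g(h_3)\mapsto g(h_3)^{-1}$, using $\Omega_{\deg(h_2)}^{*}\cong\Omega_{\deg(h_2)^{-1}}$ for the $2$-handle case, and invoking the symmetry of the dual-basis vector (``exchange the roles of $\iota$ and $\tilde\iota$'') for the $1$-handle case. Your version is more explicit about the bookkeeping --- in particular about why the $G$-coherences and the typing constraints are consistently flipped in the $3$-handle case --- but the underlying argument is identical.
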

\begin{proof}
	Let us verify the statement for the attaching spheres of $k$-handles for all relevant values of $k$:
	\handleenumerationstyle
	\begin{enumerate}
		\item A change in orientation of $S^0$ results in the exchange of the two attaching disks of a 1-handle.
			In Definition \ref{def:dual bases},
			we can simply exchange the roles of $\iota$ and $\tilde{\iota}$
			without changing the diagram.
		\item Changing the orientation of an attaching $S^1$ of a 2-handle $h_2$
			dualises its labelling object,
			but also inverts the grade $\deg(h_2)$ of the incident 3-handles
			(reusing the notation from Definition \ref{def:invariant}).
			From Remark \ref{rem:graded kirby colour dual},
			we know that $\Omega_{\deg(h_2)}^* \cong \Omega_{\deg(h_2)^{-1}}$,
			thus the evaluation is independent of this choice.
		\item Since inversion is an involution on $G$,
			we can reindex the summation over 3-handle labellings $g$
			and redefine $g(h_3) \mapsto g(h_3)^{-1}$ for the 3-handle $h_3$ whose attaching sphere was reoriented.
	\end{enumerate}
\end{proof}
\begin{lemma}
	\label{lem:invariance isotopies}
	The invariant does not change if an isotopy is applied to a $k$-handle.
\end{lemma}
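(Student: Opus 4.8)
The plan is to reduce the claim to a finite list of elementary local moves on projected diagrams and to check that each move leaves the evaluation unchanged. First note that an isotopy of handle attachments preserves the number of handles of each index, the incidence pattern of 2-handles on 1-handles, and the set of lower handles each 3-handle is incident to; consequently the graded Kirby colours $\Omega_{\deg(h_2)}$, the dual-basis labels on the 1-handles, the range of the summation over $g\colon K_3\to G$ in Definition \ref{def:invariant}, and the prefactor $\qdim{\Omega_\mathcal{C}}^{\lvert K_1\rvert-\lvert K_2\rvert}$ are all unchanged. Hence it suffices to show that for fixed labelling data $(g,X,\iota)$ the number $\langle K(g,X,\iota)\rangle$ is invariant. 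Throughout we assume the isotopy keeps the diagram a Kirby diagram; an isotopy that leaves the region where the single-picture conventions hold realises a handle slide, treated in Lemma \ref{lem:invariance slides}.

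I would first put the one-parameter family of embeddings in general position with respect to the projection $\pi$, so that the projected stratified picture --- thickened points inside ribbons inside sheets, together with the fold lines of $\pi$ --- is a regular Kirby diagram for all but finitely many parameters, and at each exceptional value undergoes a single codimension-one degeneration. This movie-move analysis is carried out, and the complete list of moves spelled out, in Appendix \ref{sec:details on diagrams}; it runs parallel to the graphical calculus of semistrict $3$-categories with duals \cite{Schaumann:GrayCats} and the $\pi$-graph calculus of \cite{TuraevHFTAndGCats2000}. The moves and the ingredient that handles each are: planar isotopies crossing no stratum, trivial because the pivotal/spherical calculus is a planar-isotopy invariant and the rule for inserting $G$-coherences depends only on combinatorial crossing and incidence data; Reidemeister II and III among ribbons, handled by naturality of the crossed braiding $c$ and the hexagon axioms of the $G\times$-BSFC (Reidemeister I does not occur, since a framed isotopy of a 2-handle preserves its blackboard framing, cf.\ Appendix \ref{sec:blackboard framing}); a ribbon sweeping past a sheet or across a sheet fold line, handled because the coherences $\eta^{\pm1}$ and $\epsilon$ inserted as in Figures \ref{fig:braiding with sheet the other way} and \ref{fig:G-coherences} telescope by the monoidal-functor coherence axioms for the $G$-action; one sheet sweeping past another, which is exactly the rewriting $\acts{g_1}{(\acts{g_2}{-})}\cong\acts{g_1g_2}{-}$ implemented by $\eta(g_1,g_2)$ and consistent by the same coherence; Morse and cusp (swallowtail) moves on fold lines, which in the underlying fusion category are the zig-zag and sphericality identities and which commute with the $G$-action because each $F(g)$ is a pivotal monoidal automorphism, hence preserves duals, evaluations and traces; and reorderings of the strata incident at a thickened point, absorbed by the cyclic symmetry of the morphism spaces and, when a sheet is involved, by $F(g)$ preserving the nondegenerate pairing.

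The main obstacle is assembling this list completely and irredundantly and, for the moves in items three through five, checking that exactly the quoted $G\times$-BSFC axioms make the inserted coherence isomorphisms cancel. The delicate point is that a sheet sweeping across a ribbon changes which group element acts on the object labelling that ribbon (step 1 of Definition \ref{def:labelled diagram}), so one must verify that the compensating coherences appearing at the relocated fold lines and crossings compose to the identity; this is precisely where the coherence of $(F,\eta,\epsilon)$ and the hexagon axioms are used, and it is done in Appendix \ref{sec:details on diagrams}.
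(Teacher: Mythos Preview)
Your approach is correct and uses the same categorical ingredients as the paper's proof (pivotal snake identities, naturality of the crossed braiding, and cancellation of the $G$-action coherences $\eta,\epsilon$), but the organisation differs: the paper argues case by case over the handle index $k\in\{1,2,3\}$, whereas you give a unified movie-move decomposition into local projected-diagram moves. Your version is more systematic and closer to the Gray-category calculus of \cite{Schaumann:GrayCats}; the paper's version is terser and simply observes, for instance, that sliding a sheet over part of the diagram acts by $g$ then $g^{-1}$ with the coherences of Figure~\ref{fig:G-coherences} cancelling.

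Two points to fix. First, for $G\times$-BSFCs the braiding compatibility axioms are \emph{heptagon} axioms, not hexagons (see \cite[(6) and (7)]{Cui2016TQFTs}); the extra side comes from the $G$-action coherence, and it is precisely this heptagon that handles Reidemeister~III in the presence of sheets. Second, you twice defer the actual verification to Appendix~\ref{sec:details on diagrams}, but that appendix contains only the proof of Lemma~\ref{lem:establish Kirby conventions} and Definition~\ref{def:regular diagram}; it does not carry out any movie-move analysis. Since your argument otherwise already names the relevant axiom for each move, you should either supply those checks inline or cite \cite{TuraevHFTAndGCats2000} and \cite{Schaumann:GrayCats} directly rather than the appendix.
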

\begin{proof}
	Essentially, this is Proposition \ref{prop:diagram evaluation invariant under isotopy},
	but translated to the labelled planar diagram derived from a Kirby diagram.
	If the isotopy is regular, the diagram in the pivotal category does not change.

	Let us retrace the proof for planar moves occurring from a $k$-handle isotopy,
	for any value of $k$:
	\handleenumerationstyle
	\begin{enumerate}
		\item Isotopy on a 1-handle (possibly with attachments)
			may cause point isotopies,
			which are covered by naturality of the crossed braiding.
		\item The braid, or heptagon, axioms
			(e.g. \cite[(6) and (7)]{Cui2016TQFTs})
			and naturality of the crossed braiding
			ensure invariance under isotopies involving or changing the crossings.
		\item Sliding an attachment of a 3-handle $h_3$ under a part of the diagram does not change the extracted diagram.
			Sliding it over a part of the diagram acts on said part with $g(h_3)^{-1}$,
			followed by an action with $g(h_3)$.
			$G\times$-coherences and their inverses are inserted where the fold arcs of the attaching sphere crosses the remaining diagram.
			We can cancel these natural isomorphisms and recover the original diagram.
			(See e.g. Figure \ref{fig:G-coherences},
			where the two coherences $\eta$ and $\eta^{-1}$ can be cancelled.)
	\end{enumerate}
\end{proof}
It is useful to keep the following fact in mind for further proofs and calculations.
\begin{remark}
	\label{rem:3-handle closed diagram}
	Recall that $G$ acts on $\mathcal{C}$ via monoidal automorphisms.
	This implies that on a \emph{closed diagram},
	the group action from a 3-handle attachment is trivial,
	since a closed diagram corresponds to an endomorphism of the monoidal unit $\mathcal{I}$.
\end{remark}
\begin{lemma}
	\label{lem:invariance slides}
	The invariant does not change if any handle slide is applied to the Kirby diagram.
\end{lemma}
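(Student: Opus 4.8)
The plan is to check invariance one elementary slide at a time, following the catalogue of Section \ref{sec:slides} and organising the cases by the index of the handle slid \emph{over}, since the label carried by that handle determines which algebraic identity is needed. A slide changes neither $\lvert K_1\rvert$ nor $\lvert K_2\rvert$, so the prefactor $\qdim{\Omega_\mathcal{C}}^{\lvert K_1\rvert-\lvert K_2\rvert}$ plays no role and I can work at the level of diagram evaluations. I would first dispose of the slides over a $1$-handle, namely the $j$-$1$-slides for $j\in\{1,2,3\}$ (these subsume the $1$-$1$- and $2$-$1$-cases). Such a slide reroutes part of the attaching sphere of $h_j$ through the remaining region $\remainingboundary h_1$ of a $1$-handle, so that in the derived diagram some ribbon or sheet becomes incident to the two thickened points $D^3_\pm$ of $h_1$. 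Since every $1$-handle is labelled by the entangled vector $\sum_i\phi_{h_1,i}\otimes\tilde\phi_{h_1,i}$ of dual bases (Definition \ref{def:dual bases}), the pair $(D^3_+,D^3_-)$ acts as a resolution of the identity on whatever passes between them: rerouting a strand through the $1$-handle and then summing over the dual bases reproduces the original evaluation, by nondegeneracy of the morphism pairing. For $j=3$ one additionally pushes the fold line of the rerouted disk through $\remainingboundary h_1$, inserting a $G\times$-coherence and its inverse, which cancel exactly as in the third case of the proof of Lemma \ref{lem:invariance isotopies}.

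Next come the slides over a $2$-handle. The handle $h_{2b}$ being slid over carries the Kirby colour $\Omega_{\deg(h_{2b})}$, which is precisely the hypothesis under which the graded sliding lemma applies. Using the identification spelled out in the remark following Lemma \ref{lem:graded sliding lemma} --- $h_{3A}$ labelled $g^{-1}h$, $h_{3B}$ labelled $h$, $h_{2a}$ in the role of $B$, the remaining diagram in the role of the ribbon $A$ --- the two sides of \eqref{eq:graded sliding lemma} are exactly the before- and after-pictures of the simultaneous $2$-$2$/$3$-$2$-slide of Figure \ref{fig:k-2-slides}, so Lemma \ref{lem:graded sliding lemma} delivers the invariance; the plain $2$-$2$-slide is the special case $g=e$, and in each case the degree of the band-summed $2$-handle changes just so that it stays coloured by a Kirby colour of the correct degree. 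A $3$-$2$-slide performed on its own is handled the same way, reindexing the object summation over $h_{2b}$, if needed, by the dimension-preserving bijection $\mathcal{O}(\mathcal{C}_{\deg(h_{2b})})\to\mathcal{O}(\mathcal{C}_{g(h_3)\deg(h_{2b})g(h_3)^{-1}})$, $X\mapsto\acts{g(h_3)}{X}$, and cancelling the fold-line coherences.

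For a $3$-$3$-slide, where $h_{3A}$ is replaced by $h_{3C}$ through a three-dimensional pair of pants, I would reindex the summation over labellings $g\colon K_3\to G$ by the bijection of $G^{\{h_{3A},h_{3B}\}}$ dictated by the pair-of-pants decomposition; the slide only alters which portion of the diagram each sheet covers, and since $G$ acts by monoidal pivotal automorphisms the resulting $G\times$-coherences cancel, so each term of the reindexed sum matches the original. For the $3$-$\infty$-move (a $3$-handle attached only to $h_0$ whose attaching sphere is swept over $\infty$) I would argue directly rather than via the heuristic of Remark \ref{rem:3-infty-move}: the move merely interchanges the two regions separated by the sphere in the role of ``covered'' region, and on a \emph{closed} diagram the action of a $3$-handle is trivial (Remark \ref{rem:3-handle closed diagram}); applying the monoidal automorphism $g(h_3)^{-1}$ to the whole scalar-valued evaluation transports the action from inside the sphere to a $g(h_3)^{-1}$-action outside, which matches the new configuration after the orientation reversal permitted by Lemma \ref{lem:independence orientations}.

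The step I expect to be the main obstacle is the $2$-handle case: making the identification between the geometric $2$-$2$/$3$-$2$-slide and the algebraic identity \eqref{eq:graded sliding lemma} fully precise --- tracking the degree of the band-summed $2$-handle through the conjugations inherent in the $G$-grading, and placing the $G\times$-coherences correctly at every fold line and crossing --- is where the genuine work lies, even though the decisive algebraic input, the graded sliding lemma, is already in hand.
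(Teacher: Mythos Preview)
Your argument is correct and follows the paper's case-by-case strategy, invoking the same key inputs: the dual-basis identity (which the paper packages as Corollary \ref{cor:1-1-slide}) for slides over $1$-handles, the graded sliding lemma for $2$-$2$, reindexing of the $G$-sum for $3$-$3$, and Remark \ref{rem:3-handle closed diagram} for $3$-$\infty$. One minor simplification available: for the $3$-$1$ and $3$-$2$ cases the paper simply observes that these moves do not change the extracted morphism in $\mathcal{C}$ at all (up to $G\times$-coherences), so your reindexing by $X\mapsto\acts{g(h_3)}{X}$ is not actually needed --- after a $3$-$2$-slide the two new incidences of $h_3$ on $h_{2b}$ are adjacent in the cyclic order with opposite signs and hence cancel in $\deg(h_{2b})$.
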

\begin{proof}
	We prove invariance under each $j$-$k$-slide:
	\begin{description}
		\item[1-1]
			Invariance under this slide is already satisfied for each 3-handle- and 2-handle-labelling individually.
			It is proven by Corollary \ref{cor:1-1-slide} in the appendix,
			with $f$ the label of the 1-handle attachment being slid,
			and $\alpha$ the 1-handle to be slid over.
		\item[2-1] Assume, without loss of generality,
			that the 2-handle $S^1$ is only slid through halfways.
			This is again Corollary \ref{cor:1-1-slide},
			where $X = X(h_2)^* \otimes X(h_2)$ and $f$ is the duality coevaluation.
		\item[2-2] This is the graded sliding lemma,
			\ref{lem:graded sliding lemma}.
		\item[3-1] This does not change the extracted diagram in $\mathcal{C}$,
			since attaching a 3-handle to a 1-handle has no effect in the evaluation.
			(If, in the process of sliding,
			 the 3-handle attachment moves above another part of the diagram,
			 Lemma \ref{lem:invariance isotopies} applies.)
		\item[3-2] Up to $G\times$-coherences,
			this doesn't change the extracted diagram.
		\item[3-3] Assume $h_3$ slides over $h_3'$.
			Up to $G\times$-coherences,
			this simply multiplies the labelling $g(h_3')$ by $g(h_3)$.
			Therefore, we can reindex the sum over 3-handle-labellings accordingly
			(since group multiplication is a set isomorphism)
			and recover the original value.
		\item[3-$\infty$] Recall Remark \ref{rem:3-infty-move} to visualise this move.
			Attaching a new 3-handle at infinity that acts on the whole diagram leaves the evaluation invariant
			(Remark \ref{rem:3-handle closed diagram}).
			Sliding over it is an invariant again,
			as we just proved.
			Removing the 3-handle at infinity again does not change the evaluation.
	\end{description}
\end{proof}
In order to prove invariance under handle cancellations,
the following lemma is useful.
\begin{lemma}
	\label{lem:multiplicativity diagrams}
	Let $K_1$ and $K_2$ be Kirby diagrams.
	$I_\mathcal{C}$ is multiplicative under disjoint union of diagrams:
	\begin{equation}
		I_\mathcal{C}(K_1 \sqcup K_2) = I_\mathcal{C}(K_1) \cdot I_\mathcal{C}(K_2)
	\end{equation}
\end{lemma}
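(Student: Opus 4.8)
The plan is to unwind Definition~\ref{def:invariant} for $K_1 \sqcup K_2$ and to observe that every ingredient of \eqref{eq:invariant long} factors as a product of a contribution coming from $K_1$ and one coming from $K_2$. There are exactly three things to check. First, the outer summation over $3$-handle labellings $g\colon (K_1\sqcup K_2)_3\to G$ should split, since a function out of a disjoint union is the same as a pair of functions, giving $\sum_{g\colon(K_1\sqcup K_2)_3\to G}=\sum_{g^{(1)}\colon(K_1)_3\to G}\sum_{g^{(2)}\colon(K_2)_3\to G}$, and likewise the object labelling $h_2\mapsto\Omega_{\deg h_2}$ and the dual-basis labellings of the $1$-handles of $K_1\sqcup K_2$ should decompose as pairs of such labellings of $K_1$ and of $K_2$. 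Second, the diagram evaluation $\langle-\rangle$ should be multiplicative under disjoint union of closed labelled diagrams. Third, the normalisation exponent $\lvert(K_1\sqcup K_2)_1\rvert-\lvert(K_1\sqcup K_2)_2\rvert$ is manifestly additive, so the powers of $\qdim{\Omega_\mathcal{C}}$ multiply. Granting these, \eqref{eq:invariant long} for $K_1\sqcup K_2$ collapses to the product of the two defining expressions, i.e. $I_\mathcal{C}(K_1)\cdot I_\mathcal{C}(K_2)$.

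The first point is essentially bookkeeping: the only subtlety is that the morphism space $\langle\bigotimes_{(h_2,\pm)\incident h_1}X(h_2)^\pm\rangle\otimes\langle\cdots\rangle$ attached to a $1$-handle $h_1$ only involves the $2$-handles incident to $h_1$, all of which lie in the same connected component as $h_1$; hence this space (and the dual-basis sum living in it) depends only on the component, so the big tensor product over $(K_1\sqcup K_2)_1$ is the tensor product of those over $(K_1)_1$ and over $(K_2)_1$. I would first use Lemma~\ref{lem:invariance isotopies} to arrange, without loss of generality, that the derived diagrams of $K_1$ and $K_2$ occupy two disjoint balls of $\remainingboundary h_0\cong\R^3\cup\{\infty\}$, so that the derived diagram of $K_1\sqcup K_2$ is a genuine disjoint union $\mathcal{D}_1\sqcup\mathcal{D}_2$ in which no ribbon, point or sheet of one diagram meets, or is covered by, anything of the other.

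The real content is the second point, and it is the step I expect to need the most care. Once $\mathcal{D}_1$ and $\mathcal{D}_2$ sit in disjoint balls, running the evaluation algorithm of Definition~\ref{def:labelled diagram} on $\mathcal{D}_1\sqcup\mathcal{D}_2$ should produce the morphism $f_1\otimes f_2\colon\mathcal{I}\cong\mathcal{I}\otimes\mathcal{I}\to\mathcal{I}\otimes\mathcal{I}\cong\mathcal{I}$, where $f_i=\langle\mathcal{D}_i\rangle$, and under the ring isomorphism $\mathcal{C}(\mathcal{I},\mathcal{I})\cong\Co$ a tensor product of scalar endomorphisms of $\mathcal{I}$ is the product of the two scalars. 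What must be verified here is that the $G$-crossed sheet structure does not interfere: no sheet of $\mathcal{D}_1$ covers any part of $\mathcal{D}_2$ or vice versa, so no group element acting from one half ever touches the data of the other half, and by Remark~\ref{rem:3-handle closed diagram} the sheet actions internal to each of the two closed pieces act trivially; thus the $G$-crossed decorations are compatible with the factorisation. Combining the three points and substituting into \eqref{eq:invariant long} then yields $I_\mathcal{C}(K_1\sqcup K_2)=I_\mathcal{C}(K_1)\cdot I_\mathcal{C}(K_2)$.
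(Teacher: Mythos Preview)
Your proof is correct and follows essentially the same approach as the paper's own argument: the evaluation of a disjoint union of closed diagrams is the monoidal product of the two endomorphisms of $\mathcal{I}$, hence the product of the two scalars, and then the sums over labellings and the normalisation factor split additively/multiplicatively. The paper compresses all of this into three sentences and the phrase ``from which the claim follows readily,'' whereas you have carefully unpacked the bookkeeping (splitting of labellings, additivity of the exponent, non-interference of sheets between the two components); your use of Lemma~\ref{lem:invariance isotopies} to first separate the two diagrams into disjoint balls is a sensible precaution that the paper leaves implicit.
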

\begin{proof}
	Evaluating the disjoint union of two diagrams results in the monoidal product of the evaluations.
	Since both diagrams are closed,
	they evaluate to endomorphisms of $\mathcal{I}$,
	so the product is simply the multiplication in $\Co$.
	This shows $\langle (K_1 \sqcup K_2) (g, X, \iota) \rangle = \langle K_1  (g, X, \iota) \rangle \cdot \langle K_2 (g, X, \iota) \rangle$,
	from which the claim follows readily.
\end{proof}
\begin{lemma}
	\label{lem:invariance cancellations}
	The invariant does not change if cancelling handle pairs are removed from, or added to the Kirby diagram.
\end{lemma}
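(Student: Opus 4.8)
The manifolds in Section \ref{sec:invariant} carry a single $0$-handle and a single $4$-handle, so by Section \ref{sec:cancellations} the cancellations to consider are the $1$-$2$- and the $2$-$3$-cancellation. The plan is to reduce each to the case in which the cancelling pair sits in a ball disjoint from the rest of the diagram, and then to compute that the contribution of such an isolated pair to $I_\mathcal{C}$ is exactly $1$.

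\textbf{Reduction to an isolated pair.} If other handles are threaded over the cancelling pair, they can first be slid off it (this is the standard mechanism behind handle cancellation, cf.\ \cite[Figure 5.13]{GompfStipsicz}); by Lemma \ref{lem:invariance slides} these slides do not change $I_\mathcal{C}$. Hence we may assume the Kirby diagram is a disjoint union $K = K_0 \sqcup P$, where $P$ is an isolated cancelling pair. By Lemma \ref{lem:multiplicativity diagrams}, $I_\mathcal{C}(K_0 \sqcup P) = I_\mathcal{C}(K_0)\cdot I_\mathcal{C}(P)$, so it suffices to prove $I_\mathcal{C}(P) = 1$ in both cases.

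\textbf{The $1$-$2$-pair.} The derived diagram of an isolated cancelling $1$-$2$-pair consists of the two thickened points coming from the $1$-handle, joined by the single ribbon that is the visible arc of the $2$-handle $h_2$ (cf.\ Figure \ref{fig:pushing 1-handle}). Since no $3$-handle is incident to $h_2$, its degree is $e$ and it is labelled by $\Omega_e$; the typing relation at the $1$-handle puts $\iota(h_1)$ in a space of the form $\langle X(h_2) \rangle \otimes \langle X(h_2)^* \rangle$, which vanishes unless $X(h_2) \cong \mathcal{I}$. So only the summand $X(h_2) = \mathcal{I}$ survives, the dual bases reduce to $1 \otimes 1$, and the diagram evaluates to the bare loop value $\qdim{\mathcal{I}} = 1$. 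As $P$ has one $1$-handle and one $2$-handle, the normalising factor is $\qdim{\Omega_\mathcal{C}}^{1-1} = 1$, so $I_\mathcal{C}(P) = 1$.

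\textbf{The $2$-$3$-pair and the main obstacle.} The derived diagram of an isolated cancelling $2$-$3$-pair is an unknotted, unframed circle $c$ (the $2$-handle $h_2$) bounding a disk $d$ (the visible half of the attaching sphere of $h_3$; its other half lies in $\remainingboundary h_2$), as in Figure \ref{fig:2-3-cancellation}. The sheet $d$, labelled by some $g \in G$, covers no part of the diagram, so it introduces no group action and no $G\times$-coherence, and the diagram evaluates to $\qdim{X(h_2)}$. The typing relation forces $\deg(X(h_2)) = g$, i.e.\ $X(h_2) \in \mathcal{O}(\mathcal{C}_g)$; summing over the $3$-handle label $g$ together with the $2$-handle label gives
\[
  I_\mathcal{C}(P) = \sum_{g \in G} \sum_{X \in \mathcal{O}(\mathcal{C}_g)} \qdim{X}^2 \cdot \qdim{\Omega_\mathcal{C}}^{0-1} = \qdim{\Omega_\mathcal{C}}^{-1} \sum_{g \in G} \qdim{\Omega_g} = \qdim{\Omega_\mathcal{C}}^{-1} \cdot \qdim{\Omega_\mathcal{C}} = 1 ,
\]
using $\Omega_\mathcal{C} = \bigoplus_{g \in G} \Omega_g$. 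This finishes the proof modulo the reduction step, which I expect to be the main obstacle: one must verify that the standard cancellation pictures translate into the diagram language exactly as described, and that every cancellation — possibly entangled with many other handles — can indeed be brought into isolated form by slides covered by Lemma \ref{lem:invariance slides}. It is worth recording that the restriction to one $0$-handle and one $4$-handle is genuinely needed: the same bookkeeping shows that an isolated $0$-$1$-pair would multiply $I_\mathcal{C}$ by $\qdim{\Omega_\mathcal{C}}$ and an isolated $3$-$4$-pair by $\lvert G \rvert$, so those moves are deliberately excluded.
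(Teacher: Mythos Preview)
Your proof is correct and follows essentially the same route as the paper: slide off extraneous handles (invoking Lemma \ref{lem:invariance slides}), apply Lemma \ref{lem:multiplicativity diagrams} to reduce to an isolated pair, and then verify $I_\mathcal{C}(P)=1$ for the $1$-$2$- and $2$-$3$-pairs by exactly the computations you give. The paper likewise does not elaborate on the ``sliding off'' reduction, treating it as a standard Kirby-calculus fact; your closing remark on why isolated $0$-$1$- and $3$-$4$-pairs would fail is a nice addition not present in the paper.
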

\begin{proof}
	Perform all necessary handle slides such that the cancelling handle pair is disconnected from the remaining diagram.
	By Lemma \ref{lem:multiplicativity diagrams},
	we only need to show that the diagram of the cancelling pair evaluates to 1.
	This is done in the following for the two relevant cancellations:
	\begin{description}
		\item[1-2]
			For a simple object $X \in \ob \mathcal{C}$ labelling the 2-handle,
			the morphism space $\langle X \rangle$ assigned to a 1-handle attaching disk is $\Co$ iff $X = \mathcal{I}$,
			and 0 otherwise.
			In the case it is $\mathcal{I}$,
			the basis consists of a single vector,
			so the sum ranges over a single summand of value 1
			(after having noted that the exponent of the normalisation cancels).
		\item[2-3] After consulting Figure \ref{fig:2-3-cancellation} and recalling that the closed loop is a diagram for the categorical dimension,
			it is apparent that the diagram evaluates to $\sum_g \qdim{\Omega_g} = \qdim{\Omega_\mathcal{C}}$,
			cancelling the normalisation.
	\end{description}
	Note that we assume all 0-1-cancellations and 3-4-cancellations to have taken place already.
\end{proof}

\section{Calculations}
\label{sec:calculations}

As an advantage over state sum models and Hamiltonian formulations
(to which the connection will be made in Section \ref{sec:ssm}),
it is much easier to calculate explicit values of the invariant.
This is mainly because handle decompositions are a very succinct description for smooth manifolds,
but also because the graphical calculus of $G\times$-BSFCs matches Kirby diagrams closely.

This observation was already made in \cite{BaerenzBarrett2016Dichromatic} for Kirby diagrams without 3-handles,
where the Crane-Yetter invariant was calculated for several 4-manifolds.
Due to the following proposition, several results can be recovered for the invariant presented here:
\begin{proposition}
	\label{prop:Crane-Yetter}
	Let $M$ be a 4-manifold with a handle decomposition that does not contain any 3-handles,
	and let $K$ be a Kirby diagram for this decomposition.
	Then its invariant from Definition \ref{def:invariant} is equal to its renormalised Crane-Yetter invariant $\widehat{CY}$ \cite[Proposition 6.1.1]{BaerenzBarrett2016Dichromatic} for the trivial degree,
	up to a factor involving the Euler characteristic:
	\begin{equation}
		I_\mathcal{C}(M)
			= \widehat{CY}_{\mathcal{C}_e}(M)
			\cdot \left( \frac{\qdim{\Omega_\mathcal{C}}}{\qdim{\Omega_{\mathcal{C}_e}}}\right)^{2 - \chi(M)}
	\end{equation}
	In particular, if $\mathcal{C}$ is concentrated in the trivial degree, they coincide.
\end{proposition}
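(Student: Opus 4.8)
The strategy is to show that, in the absence of 3-handles, the defining sum \eqref{eq:invariant long} for $I_\mathcal{C}(K)$ reduces term by term to the (renormalised) Crane-Yetter sum for the trivial-degree part $\mathcal{C}_e$, and then to reconcile the two normalising prefactors.

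First I would record that $\mathcal{C}_e$ is a braided spherical fusion category --- and hence premodular --- since the crossed braiding restricts on degree-$e$ objects to an honest braiding (post-composing $c_{X,Y}$ with $\epsilon_Y \otimes 1_X$), while the spherical structure is inherited; thus $\widehat{CY}_{\mathcal{C}_e}$ is defined. Now assume $K_3 = \emptyset$. Then the derived diagram $\mathcal{D}$ has no sheets, the outer sum over $g\colon K_3 \to G$ in Definition \ref{def:invariant} has a single (empty) term, and for every 2-handle $h_2$ the degree $\deg(h_2)$ is the empty product over incident sheets, hence $e$. Consequently in \eqref{eq:invariant long} each 2-handle colour is $\Omega_{\deg(h_2)} = \Omega_{\mathcal{C}_e}$, the colouring sum ranges only over $X\colon K_2 \to \mathcal{O}(\mathcal{C}_e)$, and the 1-handle labels are dual bases of morphism spaces computed entirely inside $\mathcal{C}_e$.

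Next I would check that the evaluation algorithm of Definition \ref{def:labelled diagram} degenerates accordingly: with no sheets present there are no group actions and no $G\times$-coherences of any of the three kinds, and the only crossings are ribbon--ribbon crossings, at which the crossed braiding is inserted --- but on objects of $\mathcal{C}_e$ this is exactly the braiding of $\mathcal{C}_e$. Therefore $\langle K(g, X, \iota)\rangle$ coincides with the evaluation of $K$ regarded as an ordinary 3-handle-free Kirby diagram labelled in $\mathcal{C}_e$ in the sense of \cite{BaerenzBarrett2016Dichromatic}, and summing over $X$ and over the dual bases with weight $\prod_{h_2 \in K_2} \qdim{X(h_2)}$ reproduces precisely the graphical sum underlying $\widehat{CY}_{\mathcal{C}_e}$.

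It remains to match the prefactors. Using the invariance already established together with Theorem \ref{thm:One 0-handle and one n-handle}, we may assume $K$ has one 0-handle and one 4-handle, so $\chi(M) = 1 - \lvert K_1 \rvert + \lvert K_2 \rvert + 1$, i.e.\ $\lvert K_1 \rvert - \lvert K_2 \rvert = 2 - \chi(M)$. Hence the factor $\qdim{\Omega_\mathcal{C}}^{\lvert K_1 \rvert - \lvert K_2 \rvert}$ in \eqref{eq:invariant long} equals $\qdim{\Omega_\mathcal{C}}^{2 - \chi(M)}$, whereas the normalisation of $\widehat{CY}_{\mathcal{C}_e}$ in \cite[Proposition 6.1.1]{BaerenzBarrett2016Dichromatic} contributes $\qdim{\Omega_{\mathcal{C}_e}}^{2 - \chi(M)}$ with the same underlying graphical sum; dividing gives the asserted factor $\left(\qdim{\Omega_\mathcal{C}}/\qdim{\Omega_{\mathcal{C}_e}}\right)^{2-\chi(M)}$. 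The ``in particular'' statement is then immediate, since if $\mathcal{C}$ is concentrated in the trivial degree then $\mathcal{C} = \mathcal{C}_e$ and this factor is $1$. I expect the one genuine subtlety to be the bookkeeping of the normalisation convention of \cite{BaerenzBarrett2016Dichromatic}: one must verify that $\widehat{CY}_{\mathcal{C}_e}$ differs from our reduced sum only by replacing $\qdim{\Omega_\mathcal{C}}$ by $\qdim{\Omega_{\mathcal{C}_e}}$ in the exponent $2 - \chi(M)$, and not by any further invariant such as a signature term.
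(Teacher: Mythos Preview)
Your proposal is correct and follows essentially the same approach as the paper's proof: observe that without 3-handles every 2-handle degree is $e$, there is no $G$-action or $G\times$-coherence in the evaluation, and then compute $\chi(M) = 2 - \lvert K_1 \rvert + \lvert K_2 \rvert$ from $\lvert K_0 \rvert = \lvert K_4 \rvert = 1$, $\lvert K_3 \rvert = 0$ to match the normalisations. The paper's proof is a terse two-sentence version of exactly this, deferring the comparison of normalisations to \cite[Propositions 4.13 and 6.1.1]{BaerenzBarrett2016Dichromatic}; you have simply written out more of the intermediate justifications (notably that the crossed braiding restricts to an honest braiding on $\mathcal{C}_e$, and that the dual-basis morphism spaces lie entirely in $\mathcal{C}_e$).
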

\begin{proof}
	Without 3-handles,
	all 2-handles are labelled with $\Omega_e$,
	and there is no $G$-action on any part of the diagram.
	Reminding ourselves that the number of 3-handles $\lvert K_3 \rvert$ is zero,
	furthermore $\lvert K_0 \rvert = \lvert K_4 \rvert = 1$,
	and therefore here $\chi(M) = \lvert K_0 \rvert - \lvert K_1 \rvert + \lvert K_2 \rvert - \lvert K_3 \rvert + \lvert K_4 \rvert = 2 - \lvert K_1 \rvert + \lvert K_2 \rvert$,
	we compare to Definition \ref{def:invariant} and \cite[Propositions 4.13 and 6.1.1]{BaerenzBarrett2016Dichromatic}.
\end{proof}
\fxwarning{I believe that such a manifold is always simply connected (because it can't contain non-cancelling 1-handles either?),
	but it's an open question whether all simply connected ones look like this.}

\subsection{Example manifolds}
\label{sec:example manifolds}
The following perspective was pointed out to the author by Ehud Meir:
If we fix a particular manifold $M^4$ and vary $\mathcal{C}$,
the quantity $I_\mathcal{C}(M)$ becomes an invariant of $G\times$-BSFCs.
We will show here that many known invariants of fusion categories can be recovered by choosing the correct manifold.
Table \ref{table:example values} summarises the results.
\begin{table}
	\centering
	\begin{tabular}{lllll}
		\toprule
		Manifold
			& Invariant value
				& $\chi(M)$
					& $\sigma(M)$
						& $\pi_1(M)$
		\\\midrule
		$S^4$ (including exotic candidates)
			& 1
				& 2
					& 0
						& 1
		\\\midrule
		$S^1 \times S^3$
			& $\lvert G \rvert \cdot \qdim{\Omega_\mathcal{C}}$
				& 0
					& 0
						& $\Z$
		\\\midrule
		$S^2 \times S^2$, $\CP{2} \connsum \opCP{2}$
			& $\qdim{\Omega_e'} \cdot \qdim{\Omega_e} \cdot \qdim{\Omega_\mathcal{C}}^{-2}$
				& 4
					& 0
						& 1
		\\\midrule
		$\CP{2}$
			& $\sum_{X \in \mathcal{O}(\mathcal{C}_e)}\qdim{X}^2 \theta_X \cdot \qdim{\Omega_\mathcal{C}}^{-1}$
				& 3
					& 1
						& 1
		\\\midrule
		$\opCP{2}$
			& $\sum_{X \in \mathcal{O}(\mathcal{C}_e)}\qdim{X}^2 \theta_X^{-1} \cdot \qdim{\Omega_\mathcal{C}}^{-1}$
				& 3
					& -1
						& 1
		\\\midrule
		$S^1 \times S^1 \times S^2$ (faithful grading)
			& $\lvert G \rvert^2 \cdot \lvert \mathcal{O}(\mathcal{C}_e') \rvert \cdot \qdim{\Omega_e}$
				& 0
					& 0
						& $\Z \oplus \Z$
		\\\midrule
		$S^1 \times S^3 \connsum S^1 \times S^3 \connsum S^2 \times S^2$
			& $\lvert G \rvert^2 \cdot \qdim{\Omega_e'} \cdot \qdim{\Omega_e}$
				& 0
					& 0
						& $\Z * \Z$
		\\\bottomrule
	\end{tabular}
	\caption{
		Example values for the invariant,
		in comparison with classical invariants.
		($\Z * \Z$ denotes the free product of $\Z$ with itself,
		 i.e. the free group on two generators.)
		}
	\label{table:example values}
\end{table}

\begin{anfxwarning}{The following doesn't make so much sense,
	since the input data is just different.
	Need a pair of manifolds that can be distinguished here that can't with CY.
	Maybe something with $\pi_3$,
	or with the same $\pi_2$ and $\pi_1$, but different action.}
\small
We can already deduce from it that the invariant reflects more information about the input category than the Crane-Yetter invariant \cite[Tables 3 and 4]{BaerenzBarrett2016Dichromatic}.
For example, in the extension of a modular category,
it can recover the dimension of the original category $\mathcal{C}_e$ and the size of the group.
\end{anfxwarning}

\subsubsection{$S^1 \times S^3$}

Recall the Kirby picture for this manifold from Figure \ref{fig:S1S3}.
The attaching sphere of the 3-handle, labelled with a group element $g$,
acts trivially in the diagram according to Remark \ref{rem:3-handle closed diagram}.
There is a single 1-handle and no 2-handle present.
Summing over the 1-dimensional morphism space of the monoidal identity yields:
\begin{align}
	I_\mathcal{C}(S^1 \times S^3)
	&= \sum_{\substack{g \in G \\ \iota \in \langle \mathcal{I} \rangle}}
		\tikzbo{
			\node[plaquette]             {$\iota$};
			\node[plaquette] at (0.8, 0) {$\tilde\iota$};
		}
		\cdot \qdim{\Omega_\mathcal{C}}
	\\
	&= \sum_{g \in G}
		\qdim{\Omega_\mathcal{C}}
	\\
	&= \lvert G \rvert \qdim{\Omega_{\mathcal{C}}}
\end{align}

\subsubsection{$S^2 \times S^2$}

Since $S^2$ admits a standard handle decomposition with a single 0-handle and a single 2-handle,
the product handle decomposition of $S^2 \times S^2$ only consists of a 0-handle, two 2-handles and a 4-handle,
so Proposition \ref{prop:Crane-Yetter} is applicable
and we can follow the calculation from \cite[(6.1.3)]{BaerenzBarrett2016Dichromatic}
to arrive at:
\begin{equation}
	I_\mathcal{C}(S^2 \times S^2) = \frac{\qdim{\Omega_{\mathcal{C}_e}'}\qdim{\Omega_{\mathcal{C}_e}}}{\qdim{\Omega_\mathcal{C}}^2}
\end{equation}
For the details of the calculation,
assume as Kirby diagram for $S^2 \times S^2$ the Hopf link of two 2-handle attachments \cite[Figure 4.30]{GompfStipsicz}.
Both 2-handles are labelled with $\Omega_{\mathcal{C}_e}$.
Then by the killing lemma \ref{lem:killing},
one of them only contributes with the symmetric centre $\Omega_{\mathcal{C}_e}'$.
Then the diagram unlinks (since any object in $\Omega_{\mathcal{C}_e}'$ braids trivially with any other object),
and we arrive at the desired value after including the normalisation.

\subsubsection{Complex projective plane and Gauss sums}
\label{sec:CP2 and Gauss sums}

The complex projective plane $\CP 2$ has a handle decomposition with a single 0-handle, 2-handle and 4-handle,
so again Proposition \ref{prop:Crane-Yetter} is applicable and we can reuse the results from \cite[Section 3.4]{BaerenzBarrett2016Dichromatic}.
The 2-handle is attached along the 1-framed unknot
(see the equation below),
and accordingly the evaluation of this diagram is the trace over the \emph{twist},
whose eigenvalue on a simple, trivially graded object $X$ we will denote by $\theta_X$ here:
\begin{equation}
	I\left(
	\begin{tikzpicture} [ baseline = {(0,-0.1)} ]
		\pic {CP2};
	\end{tikzpicture}
	\right)
	= \sum_{X \in \mathcal{O}(\mathcal{C}_e)}\qdim{X}^2 \theta_X \cdot \qdim{\Omega_\mathcal{C}}^{-1}
\end{equation}
Flipping the orientation famously results in a non-diffeomorphic manifold,
$\opCP 2$.
Not surprisingly, the Kirby diagram is the mirror diagram of the above,
and its invariant is the same with $\theta$ replaced by $\theta^{-1}$.
The values $\sum_{X \in \mathcal{O}(\mathcal{C}_e)}\qdim{X}^2 \theta_X^\pm$ are known as the \emph{Gauss sums} of the ribbon fusion category $\mathcal{C}_e$.

\subsubsection{$S^1 \times S^1 \times S^2$}

To study an example where the presence of 3-handles influences the manifold,
we turn to $S^1 \times S^1 \times S^2$,
borrowing several calculational steps from \cite[Section 6.2.1]{BaerenzBarrett2016Dichromatic}.
For now, we assume that the $G$-grading on $\mathcal{C}$ is faithful
(i.e. $\mathcal{C}_g \not\simeq 0$ for any $g$),
which is quite restrictive in the context of $G\times$-BSFCs:
Compare Remark \ref{rem:Gx grading not faithful} and note that Corollary \ref{cor:killing iso} implies that any faithfully graded $G\times$-BSFC has natural isomorphisms $X \to \acts{g}{X}$ for every group element $G$ on the transparent subcategory $X \in \mathcal{C}'$.
Completing this calculation for non-faithfully graded $G\times$-BSFCs requires a deeper study of the group action and would probably yield an interesting invariant of the category.

Recall the Kirby diagram from Figure \ref{fig:S1S1S2} and note that $h_{2a}$ does not have any 3-handle attached to it and will thus be labelled $\mathcal{C}_e$,
while $h_{2b}$ has both 3-handles attached twice with opposite boundary orientation and will be labelled by $\Omega_{[g_1, g_2]}$,
where $[g_1, g_2] \coloneqq g_1 g_2 g_1^{-1} g_2^{-2}$.
\fxwarning{Consider renaming $g_1 \mapsto g_A, g_2 \mapsto g_B$ according to 3-handle labels.}
\fxwarning{Consider globally changing conventions that 3-handles are lower case (group elements) and 2-handles are upper case (objects).
	But note that we label 2-handles with simples.}
The number of 1-handles equals the number of 2-handles,
cancelling the normalisation,
and we can begin to evaluate the diagram:
\newdimen\encircleradius
\newdimen\mysep
\tikzmath{
	\encircleradius = 0.4cm;
	\height         = 1.3;
	\width          = 2.3;
	\mysep          = 0.2cm;
}
\begin{align}
	I(S^1 \times S^1 \times S^2)
	&= \sum_{\mathclap{\substack{
		g_1, g_2 \in G
		\\
		X \in \mathcal{O}(\mathcal{C}_e)
		\\
		\alpha, \beta
	}}}
	\qdim{X}
	\cdot
	\begin{tikzpicture}[baseline=(O.base)]
		\node (O) {\vphantom{X}};
		\node [ plaquette ] (alpha)    at ( \width,  \height) {$\acts{g_1}{\alpha}$};
		\node [ plaquette ] (hatalpha) at (-\width, -\height) {$\tilde\alpha$};
		\node [ plaquette ] (hatbeta)  at ( \width, -\height) {$\tilde\beta$};
		\node [ plaquette ] (beta)     at (-\width,  \height) {$\acts{g_2}{\beta}$};
		\draw [ thick, directed ]
			(alpha)
				-- node [ above, very near start ] {$\acts{g_1g_2}{X}$}
				   node [ above, very near end   ] {$\acts{g_2g_1}{X}$}
			(beta)
		;
		\draw [ thick, directed ]
			(beta)
				-- node [ left ]  {$\acts{g_2}{X}$}
			(hatalpha)
		;
		\draw [ thick, directed ]
			(hatalpha)
				-- node [ below ] {$X$}
			(hatbeta)
		;
		\draw [ thick, directed ]
			(hatbeta)
				-- node [ right ] {$\acts{g_1}{X}$}
			(alpha)
		;
		\draw
			[ diagram
			, shorten <   = \diagramwhitesep
			, shorten >   = \diagramwhitesep
			]
			(\encircleradius, \height)
			arc
				[ radius      = \encircleradius
				, start angle =   0
				, end angle   = 360
				];
		\node [ above ] at (0, \height + \encircleradius) {$\Omega_{[g_1,g_2]}$};
	\end{tikzpicture}
	\label{eq:S1S1S2defeq}
	\intertext{
		We are using the dual basis convention from Figure \ref{fig:dual bases} when summing over $\alpha$ and $\beta$.
		The contribution from $h_{2b}$ is an encircling where Lemma \ref{lem:killing G-crossed} is applicable,
		so we can restrict the sum over $X$ to $\mathcal{C}_e'$,
		since $\mathcal{C}_{[g_1, g_2]} \not\simeq 0$.
		The morphism spaces like $\langle (\acts{g_2}{X})^* \otimes X \rangle \ni \tilde\alpha$ may not be familiar at first,
		but recall that Corollary \ref{cor:killing iso} defines a canonical isomorphism $X \to \acts{g_2}{X}$,
		the encircling by $\Omega_{g_2}$.
		The morphism space $\langle X^* \otimes X \rangle$ is inhabited by the coevaluation $\coev_X \colon \mathcal{I} \to X^* \otimes X$,
		and $\langle (\acts{g_2}{X})^* \otimes X \rangle$ is spanned by the coevaluation precomposed with the mentioned isomorphism.
		We thus insert the encircling with $\Omega_{g_2}$ graphically where $\acts{g_2}{X}$ enters $\tilde\alpha$,
		and insert the inverse of the isomorphism
		(which is encircling with $\Omega_{g_2}$ and prefactors detailled in Corollary \ref{cor:killing iso})
		at $\alpha$.
		The analogous computation can be done for $\beta$ and $\tilde\beta$.
		These four new encirclings can be slid off the original encircling from $h_{2b}$,
		by the sliding lemma \ref{lem:graded sliding lemma}.
		This changes its grade to $\mathcal{C}_e$,
		and unlinks the encirclings from the diagram.
		They can thus be evaluated and result in dimensional factors.
		Including all prefactors, we have:
	}
	&= \sum_{\mathclap{\substack{
		g_1, g_2 \in G
		\\
		X \in \mathcal{O}(\mathcal{C}_e')
		\\
		\alpha, \beta
	}}}
	\qdim{X}
	\cdot
	\begin{tikzpicture}[baseline=(O.base)]
		\node (O) {\vphantom{X}};
		\node [ plaquette ] (alpha)      at ( \width,  \height) {$\alpha$};
		\node [ plaquette ] (tildealpha) at (-\width, -\height) {$\tilde\alpha$};
		\node [ plaquette ] (tildebeta)  at ( \width, -\height) {$\tilde\beta$};
		\node [ plaquette ] (beta)       at (-\width,  \height) {$\beta$};
		\draw [ thick, directed ]
			(alpha)
				-- node [ above, very near start ] {$X$}
			(beta)
		;
		\draw [ thick, directed ]
			(beta)
				-- node [ left ]  {$X$}
			(tildealpha)
		;
		\draw [ thick, directed ]
			(tildealpha)
				-- node [ below ] {$X$}
			(tildebeta)
		;
		\draw [ thick, directed ]
			(tildebeta)
				-- node [ right ] {$X$}
			(alpha)
		;
		\draw
			[ diagram
			, shorten <   = \diagramwhitesep
			, shorten >   = \diagramwhitesep
			]
			(\encircleradius, \height)
			arc
				[ radius      = \encircleradius
				, start angle =   0
				, end angle   = 360
				];
		\node [ above ] at (0, \height + \encircleradius) {$\Omega_e$};
	\end{tikzpicture}
	\\\nonumber
	&\qquad \cdot \qdim{\Omega_{g_1}} \qdim{\Omega_{g_2}}  \qdim{\Omega_{g_1^{-1}}} \qdim{\Omega_{g_2^{-1}}}
	\\\nonumber
	&\qquad \cdot \qdim{X}^{-2} \qdim{\Omega_e}^{-2} \qdim{\Omega_{g_1}}^{-1} \qdim{\Omega_{g_1}}^{-1}
	\intertext{
		By Lemma \ref{lem:global dimension degree},
		all factors of the form $\qdim{\Omega_g}$ are equal if they are nonzero.
		The remaining calculation follows \cite[Section 6.2.1]{BaerenzBarrett2016Dichromatic}.
		The ordinary killing lemma \ref{lem:killing} is applied,
		and evaluations and coevaluations with the correct prefactors inserted for the dual bases.
	}
	&= \lvert G \rvert^2 \cdot \lvert \mathcal{O}(\mathcal{C}_e') \rvert \cdot \qdim{\Omega_e}
\end{align}

\begin{anfxerror}{Picture problems}
	\begin{itemize}
		\item Write out some steps in between as formulae.
			I had e.g. the dimcircles of the slid-off other encirclings,
			but not sure whether that's so clear.
	\end{itemize}
\end{anfxerror}
\begin{anfxwarning}{Picture improvement}
	\begin{itemize}
		\item Unsatisfying vertical alignment of tikzpicture (node contents)
		\item Is the white sep for the encirclement good?
		\item Unify all circle sizes in the document
	\end{itemize}
\end{anfxwarning}
\fxnote{
	This calculation seems to imply that we can always take the incident circles of 3-handles on 1-handles as actual $\Omega$-loops!
	Maybe add this as part of the calculus?
	But what to do when 3-handles are incident as lines between 2-handle points?
	And what if it's not faithfully graded?
}
\fxwarning{It would be good if the middle step with the many encirclings could be illustrated}

\paragraph{Variant: Trivial grading on $\Z_3$}
To see the effect of the group action in a non-faithfully graded $G\times$-BSFC $\mathcal{C}$,
we choose explicit examples for the category.
First, set $G = \Z_2$ and $\mathcal{C}_{\Z_3}^{\Z_2} = \Vect_{\Z_3}$,
which denotes $\Z_3$-graded finite-dimensional vector spaces,
with simple objects $\mathcal{I}, \omega, \omega^*$.
Let the grading be concentrated in the trivial degree,
and equip the category with
the trivial $\Z_2$-action,
and the trivial braiding.
Second, define $\widetilde{\mathcal{C}}_{\Z_3}^{\Z_2}$ with the same data as $\mathcal{C}_{\Z_3}^{\Z_2}$,
but alter the group action such that the nontrivial element $\sigma \in \Z_2$
acts as $\acts{\sigma}{\omega} = \omega^*$,
and $\acts{\sigma}{\omega^*} = \omega$.

For $\mathcal{C}_{\Z_3}^{\Z_2}$ we jump ahead slightly to Proposition \ref{pro:dw} and learn:
\begin{equation}
	I_{\mathcal{C}_{\Z_3}^{\Z_2}}(S^1 \times S^1 \times S^2)
		= \lvert \{ \phi\colon \Z \oplus \Z \to \Z_2 \} \rvert
		\cdot \widehat{CY}_{\Vect_{\Z_3}}(S^1 \times S^1 \times S^2) = 4 \cdot 9 = 36
\end{equation}
For $\widetilde{\mathcal{C}}_{\Z_3}^{\Z_2}$,
we have to follow the calculation from \eqref{eq:S1S1S2defeq}.
Since $\Z_3$ is abelian, $\Omega_{[g_1, g_2]} = \Omega_e$ throughout.
The encircling unlinks and contributes as a global factor of $\qdim{\Omega_e} = 3$.
We do not have natural isomorphisms $X \cong \acts{g}{X}$ and need to perform the sums over the morphisms explicitly.
Luckily, the morphism spaces $\langle \acts{g}{X^*} \otimes X \rangle$ are only $\Co$
for $g = e$ or $X = \mathcal{I}$,
and 0 otherwise.
The diagram evaluates to 1 if both morphism spaces are $\Co$,
which leaves us to merely count the admissible labels $(X, g_1, g_2)$:
\begin{equation}
	I_{\widetilde{\mathcal{C}}_{\Z_3}^{\Z_2}}
		= \lvert \{ (\mathcal{I}, g_1, g_2) | g_1, g_2 \in \Z_2  \} \sqcup \{ (\omega, e, e), (\omega^*, e, e) \} \rvert \cdot \qdim{\Omega_e}
		= 6 \cdot 3 = 18
\end{equation}
\fxwarning{Recheck. And update TQFT examples if error.}

\subsection{Connected sum, smooth structures, and simply-connectedness}

It is a natural question to ask how strong the invariant is,
and in particular whether it can detect smooth structures.
In this subsection, we demonstrate that generically,
this is not the case.

\begin{proposition}
	\label{pro:connected sum}
	The invariant is multiplicative under connected sum.
	Explicitly, let $M_1$ and $M_2$ be two manifolds,
	and $M_1 \connsum M_2$ their connected sum.
	Then:
	\begin{equation}
		I_\mathcal{C}(M_1 \connsum M_2) = I_\mathcal{C}(M_1) \cdot I_\mathcal{C}(M_2)
	\end{equation}
\end{proposition}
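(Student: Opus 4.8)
The plan is to realise the connected sum at the level of Kirby diagrams as a disjoint superposition, and then to invoke the multiplicativity of $I_\mathcal{C}$ under disjoint union of diagrams (Lemma \ref{lem:multiplicativity diagrams}). First I would reduce to the case that $M_1$ and $M_2$ are connected (otherwise I argue componentwise, since the connected sum only alters one component of each), so that by Theorem \ref{thm:One 0-handle and one n-handle} each $M_i$ has a handle decomposition with a single 0-handle and a single 4-handle, presented by a Kirby diagram $K_i$. By regularity (Definition \ref{def:regular handle decomposition}) each $K_i$ lies in a bounded region of $\remainingboundary h_0 \cong \R^3 \cup \{\infty\}$, so $K_1$ and $K_2$ can be placed in disjoint bounded regions of a common $\R^3$, yielding one diagram $K \coloneqq K_1 \sqcup K_2$ (still with a single 0-handle and, implicitly, a single 4-handle).

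The key step is to verify that $K$ is a Kirby diagram of $M_1 \connsum M_2$. Write $N_i \coloneqq M_i \setminus \mathrm{int}(D^4)$ for the 3-handlebody of $M_i$, so that $\partial N_i \cong S^3$ and $M_i = N_i \cup_{S^3} h_4$. The 3-handlebody underlying $K$ is a single 0-handle with the 1-, 2- and 3-handles of both $M_1$ and $M_2$ attached in disjoint regions, hence is the boundary connected sum $N_1 \natural N_2$, whose boundary is again $S^3$. On the other hand, excising a 4-ball straddling the gluing sphere of $M_1 \connsum M_2 = N_1 \cup_{S^3} N_2$ exhibits $(M_1 \connsum M_2) \setminus \mathrm{int}(D^4)$ as the same boundary connected sum $N_1 \natural N_2$; capping its $S^3$ boundary with the single 4-handle recovers $M_1 \connsum M_2$. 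Thus $K$ describes $M_1 \connsum M_2$, and the presence of 3-handles causes no trouble precisely because the two diagrams are assembled disjointly. (For diagrams without 3-handles this is the standard picture, cf.\ \cite{GompfStipsicz}; the 3-handle case is identical.)

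It then remains only to assemble the pieces: by Lemma \ref{lem:multiplicativity diagrams} together with the already-established fact that $I_\mathcal{C}$ is a diffeomorphism invariant independent of the chosen Kirby diagram,
\[
	I_\mathcal{C}(M_1 \connsum M_2) = I_\mathcal{C}(K_1 \sqcup K_2) = I_\mathcal{C}(K_1) \cdot I_\mathcal{C}(K_2) = I_\mathcal{C}(M_1) \cdot I_\mathcal{C}(M_2).
\]
The normalisation powers of $\qdim{\Omega_\mathcal{C}}$ split correctly because $\lvert (K_1 \sqcup K_2)_j \rvert = \lvert (K_1)_j \rvert + \lvert (K_2)_j \rvert$, but this is already subsumed in Lemma \ref{lem:multiplicativity diagrams}. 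I expect the main obstacle to be the handle-theoretic identification in the second paragraph of the superposed diagram with the connected sum; the remainder is bookkeeping.
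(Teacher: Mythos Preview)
Your proposal is correct and follows exactly the paper's approach: the paper's proof simply states that the disjoint union of Kirby diagrams for $M_1$ and $M_2$ is a Kirby diagram for $M_1 \connsum M_2$, and then invokes Lemma \ref{lem:multiplicativity diagrams}. You have merely spelled out in detail the standard handle-theoretic justification for the first sentence, which the paper leaves implicit.
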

\begin{proof}
	Given two Kirby diagrams for $M_1$ and $M_2$, respectively,
	their disjoint union is a diagram for $M_1 \connsum M_2$.
	Then the statement follows from Lemma \ref{lem:multiplicativity diagrams}.
\end{proof}
This proposition has far-reaching consequences for generic $G\times$-BSFCs:
\begin{corollary}
	Assume furthermore that the Gauss sum of $\mathcal{C}_e'$ is invertible.
	Then $I_\mathcal{C}$ does not detect exotic smooth structures.
\end{corollary}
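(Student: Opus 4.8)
The plan is to combine multiplicativity under connected sum (Proposition~\ref{pro:connected sum}) with the classical stabilisation theory of smooth $4$-manifolds. An exotic smooth structure is witnessed by two closed, smooth, oriented $4$-manifolds $M$ and $M'$ that are homeomorphic but not diffeomorphic, and ``$I_\mathcal{C}$ does not detect exotica'' means showing $I_\mathcal{C}(M) = I_\mathcal{C}(M')$ whenever $M$ and $M'$ are homeomorphic. First I would invoke Wall's stabilisation theorem (together, in the simply-connected case, with Freedman's classification by the intersection form): homeomorphic closed smooth $4$-manifolds become diffeomorphic after connected sum with sufficiently many copies of $S^2 \times S^2$, so there is some $k \geq 0$ with $M \connsum k(S^2 \times S^2) \cong M' \connsum k(S^2 \times S^2)$. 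Using the standard Kirby-calculus identity $(S^2 \times S^2) \connsum \CP{2} \cong \CP{2} \connsum \CP{2} \connsum \opCP{2}$ and iterating it, connect-summing one extra $\CP{2}$ onto both sides converts this into
\begin{equation*}
	M \connsum (k+1)\CP{2} \connsum k\,\opCP{2} \;\cong\; M' \connsum (k+1)\CP{2} \connsum k\,\opCP{2}.
\end{equation*}

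Next I would apply Proposition~\ref{pro:connected sum} to both sides, which yields $I_\mathcal{C}(M)\cdot I_\mathcal{C}(\CP{2})^{k+1}\cdot I_\mathcal{C}(\opCP{2})^{k} = I_\mathcal{C}(M')\cdot I_\mathcal{C}(\CP{2})^{k+1}\cdot I_\mathcal{C}(\opCP{2})^{k}$. Hence the claim reduces to showing that $I_\mathcal{C}(\CP{2})$ and $I_\mathcal{C}(\opCP{2})$ are invertible in $\Co$, so that these factors may be cancelled. By the computation in Section~\ref{sec:CP2 and Gauss sums} (see also Table~\ref{table:example values}),
\begin{equation*}
	I_\mathcal{C}(\CP{2}) = \qdim{\Omega_\mathcal{C}}^{-1} \sum_{X \in \mathcal{O}(\mathcal{C}_e)} \qdim{X}^2 \theta_X,
	\qquad
	I_\mathcal{C}(\opCP{2}) = \qdim{\Omega_\mathcal{C}}^{-1} \sum_{X \in \mathcal{O}(\mathcal{C}_e)} \qdim{X}^2 \theta_X^{-1},
\end{equation*}
and since $\qdim{\Omega_\mathcal{C}} = \dim \mathcal{C} \neq 0$, these are invertible precisely when the Gauss sums $\tau^{\pm}(\mathcal{C}_e)$ of the ribbon fusion category $\mathcal{C}_e$ are nonzero.

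Finally I would reduce nonvanishing of $\tau^{\pm}(\mathcal{C}_e)$ to the hypothesis. The key algebraic input is the well-known fact (Bruguières, Müger) that a premodular Gauss sum factors, up to a nonzero scalar, as the Gauss sum of the modularisation $\mathcal{C}_e^{\mathrm{mod}}$ times the Gauss sum of the symmetric centre $\mathcal{C}_e'$; the modular factor always satisfies $\lvert \tau^{\pm} \rvert^2 = \dim \mathcal{C}_e^{\mathrm{mod}} \neq 0$, so $\tau^{\pm}(\mathcal{C}_e) \neq 0$ if and only if $\tau^{\pm}(\mathcal{C}_e') \neq 0$, i.e. if and only if the Gauss sum of $\mathcal{C}_e'$ is invertible. (Equivalently, by Deligne's theorem $\mathcal{C}_e' \simeq \Rep(G_0, z)$, and invertibility of its Gauss sum is equivalent to $z = e$, i.e. to $\mathcal{C}_e'$ being Tannakian.) Under this assumption the cancellation above is legitimate, so $I_\mathcal{C}(M) = I_\mathcal{C}(M')$.

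The step I expect to be the main obstacle is the topological input rather than the categorical bookkeeping: Wall's stabilisation is cleanest for simply-connected manifolds, where Freedman's theorem upgrades ``homeomorphic'' to ``stably diffeomorphic'', while for arbitrary fundamental groups one must either appeal to the relative version through Kreck's modified surgery or restrict to the (still very broad) class of manifolds with good fundamental group; I would phrase the corollary accordingly and note that all standard exotic examples lie in this range. One should also check that $\tau^{+}(\mathcal{C}_e)$ and $\tau^{-}(\mathcal{C}_e)$ vanish under the same circumstances, which is immediate from the modularisation description, and observe that stabilising with $S^2 \times S^2$ alone already suffices — its invariant $\qdim{\Omega_{\mathcal{C}_e}'}\,\qdim{\Omega_{\mathcal{C}_e}}\,\qdim{\Omega_\mathcal{C}}^{-2}$ is a ratio of nonzero global dimensions — so the $\CP{2}$-route is what makes the Gauss-sum hypothesis the natural formulation; the remaining verifications are routine.
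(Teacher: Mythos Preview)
Your approach is along the right lines and matches the paper's on the categorical side: the reduction ``Gauss sum of $\mathcal{C}_e'$ invertible $\Rightarrow$ Gauss sums of $\mathcal{C}_e$ invertible $\Rightarrow$ $I_\mathcal{C}(\CP{2})$, $I_\mathcal{C}(\opCP{2})$ invertible'' is exactly what the paper does (it cites \cite[Proposition~6.11]{OnBraidedFusionCats} for the first implication, where you invoke modularisation and Deligne's theorem). Combined with multiplicativity under connected sum, this is the whole algebraic content.

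The gap is precisely the one you flag: Wall's theorem only gets you from \emph{h-cobordant} (hence, via Freedman, from \emph{homeomorphic simply-connected}) to $S^2\times S^2$-stably diffeomorphic, and for arbitrary $\pi_1$ you do not know that homeomorphic smooth $4$-manifolds are $S^2\times S^2$-stably diffeomorphic. The paper sidesteps this by citing a stronger topological input, namely the $\CP{2}$-stable classification of Kasprowski--Powell--Teichner \cite[Section~1.4]{TeichnerKasprowskiPowell4ManifoldsCP2}: any multiplicative invariant with $I(\CP{2})$ and $I(\opCP{2})$ invertible depends only on the signature, the Euler characteristic, the fundamental group, and the image of the fundamental class in $H_4(B\pi_1)$---all of which are homeomorphism invariants. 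This works for \emph{all} fundamental groups, without any ``good group'' hypothesis, so you should invoke that result directly rather than routing through Wall. Your conversion $(S^2\times S^2)\connsum\CP{2}\cong 2\CP{2}\connsum\opCP{2}$ is then unnecessary, and your side remark that $S^2\times S^2$-stabilisation alone would not need the Gauss-sum hypothesis is correct but only relevant in the simply-connected case.
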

\begin{proof}
	By \cite[Proposition 6.11]{OnBraidedFusionCats}, the Gauss sums of $\mathcal{C}_e$ are invertible,
	and thus also $I_\mathcal{C}(\CP 2)$ and $I_\mathcal{C}(\opCP 2)$,
	by the results of Section \ref{sec:CP2 and Gauss sums}.
	We can apply the remark from \cite[Section 1.4]{TeichnerKasprowskiPowell4ManifoldsCP2} to infer that $I(M)$ for any $M$ depends only on the signature, the Euler characteristic, the fundamental group and the fundamental class of $M$.
\end{proof}

\begin{corollary}
	Assume again that the Gauss sum of $\mathcal{C}_e'$ is invertible.
	For a simply connected manifold $M$ with Euler characteristic $\chi(M)$ and signature $\sigma(M)$,
	the invariant readily computes as:
	\begin{equation}
		I_\mathcal{C}(M)
			=     I_\mathcal{C}(  \CP 2)^{\frac{\chi(M) + \sigma(M)}{2}-1}
			\cdot I_\mathcal{C}(\opCP 2)^{\frac{\chi(M) - \sigma(M)}{2}-1}
	\end{equation}
\end{corollary}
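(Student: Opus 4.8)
The plan is to deduce this from the preceding corollary together with multiplicativity under connected sum. By the argument given in the proof of that corollary --- which, under the invertibility hypothesis on the Gauss sum, invokes the remark from \cite[Section 1.4]{TeichnerKasprowskiPowell4ManifoldsCP2} --- the number $I_\mathcal{C}(M)$ depends only on the signature, the Euler characteristic, the fundamental group, and the fundamental class of $M$. When $M$ is simply connected the fundamental group is trivial and the fundamental class carries no information beyond the chosen orientation, so $I_\mathcal{C}(M)$ is determined by the pair $(\chi(M),\sigma(M))$ alone. The task thus reduces to exhibiting a convenient manifold realising any such pair.

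To this end I would use the standard connected-sum models. Let $b_2^{+}(M)$ and $b_2^{-}(M)$ be the dimensions of the maximal positive- and negative-definite subspaces of the intersection form of $M$. For a simply connected closed $4$-manifold one has $\chi(M) = 2 + b_2^{+}(M) + b_2^{-}(M)$ and $\sigma(M) = b_2^{+}(M) - b_2^{-}(M)$, hence $\chi(M) + \sigma(M) = 2 + 2 b_2^{+}(M)$ and $\chi(M) - \sigma(M) = 2 + 2 b_2^{-}(M)$; in particular both quantities are even and at least $2$, so
\[ a \coloneqq \frac{\chi(M)+\sigma(M)}{2} - 1 = b_2^{+}(M), \qquad b \coloneqq \frac{\chi(M)-\sigma(M)}{2} - 1 = b_2^{-}(M) \]
are non-negative integers. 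Let $N$ be the connected sum of $a$ copies of $\CP{2}$ and $b$ copies of $\opCP{2}$, with the convention that the empty connected sum is $S^4$. Then $N$ is a smooth, closed, oriented, simply connected $4$-manifold with $\chi(N) = \chi(M)$ and $\sigma(N) = \sigma(M)$, so by the first paragraph $I_\mathcal{C}(M) = I_\mathcal{C}(N)$.

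Finally I would compute $I_\mathcal{C}(N)$ by Proposition \ref{pro:connected sum} together with $I_\mathcal{C}(S^4) = 1$ (Table \ref{table:example values}), obtaining
\[ I_\mathcal{C}(N) = I_\mathcal{C}(\CP{2})^{a} \cdot I_\mathcal{C}(\opCP{2})^{b}, \]
and substituting the values of $a$ and $b$ yields the asserted formula. Almost all of the content sits in the preceding corollary and in Proposition \ref{pro:connected sum}; the only point one must not overlook is that the exponents must be non-negative integers for the model $N$ to exist as a smooth manifold, which is precisely the inequalities $\chi(M) \pm \sigma(M) \geq 2$ recorded above. I do not expect any genuine obstacle.
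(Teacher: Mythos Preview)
Your argument is correct and follows the same underlying strategy as the paper, which simply defers to \cite[Lemma 3.12]{BaerenzBarrett2016Dichromatic}. That lemma encapsulates precisely the reasoning you spell out: $\CP 2$-stable diffeomorphism (here obtained from the preceding corollary via \cite{TeichnerKasprowskiPowell4ManifoldsCP2}) together with multiplicativity under connected sum and invertibility of $I_\mathcal{C}(\CP 2)$, $I_\mathcal{C}(\opCP 2)$ forces the formula. Your version has the small advantage of being self-contained relative to the paper's earlier results rather than citing out; the only cosmetic point is that the ``fundamental class'' in the cited remark means the image $c_*[M] \in H_4(B\pi_1(M))$, which vanishes outright for simply connected $M$ since $B\pi_1(M)$ is a point --- your phrasing ``carries no information beyond the chosen orientation'' is slightly loose but the conclusion is right.
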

\begin{proof}
	See e.g. \cite[Lemma 3.12]{BaerenzBarrett2016Dichromatic}.
\end{proof}
\begin{example}
	Under the same assumptions as before, we have
	$I_\mathcal{C}\left(\CP{2} \connsum \opCP{2}\right) = I_\mathcal{C}\left(S^2 \times S^2\right)$.
	\fxwarning{More left rights here and elsewhere}
\end{example}

\begin{remark}
	The invariant in its generic form does not seem to be helpful in the search for exotic smooth 4-manifolds,
	nor does it depend directly on the 3-type of the manifold
	(as was hoped e.g. in \cite[Section 5]{WilliamsonWangGCrossedTopPhases2017}).

	In line with what we noted at the beginning of Section \ref{sec:example manifolds},
	the more fruitful viewpoint is to study the resulting invariants for $G\times$-BSFCs when fixing a particular manifold.
	By Table \ref{table:example values},
	we could recover the global dimensions of $\mathcal{C}$, $\mathcal{C}_e$ and $\mathcal{C}_e'$,
	as well as the rank of $\mathcal{C}_e'$,
	and the Gauss sums.
	By the next subsection we will be able to recover most information about the group $G$.
	It remains an interesting open question whether there is a finite set of manifolds such that the corresponding set of invariants is complete on $G\times$-BSFCs,
	i.e. can distinguish any two $G\times$-BSFCs up to equivalence.
\end{remark}

\subsection{Untwisted Dijkgraaf-Witten theory}
\label{sec:dw}

\begin{definition}
	Let $G$ be a finite group.
	The \textbf{untwisted Dijkgraaf-Witten invariant} $DW_G(M)$ is defined as the number of $G$-connections on the manifold $M$:
	\begin{equation}
		DW_G(M) \coloneqq \lvert \{ \phi\colon \pi_1(M) \to G \} \rvert
	\end{equation}
	This normalisation is not the most common in the literature,
	but it is simpler for our purposes.
\end{definition}
\begin{proposition}[Compare {\cite[Proposition 4.5]{Cui2016TQFTs}}]
	\label{pro:dw}
	Let $\mathcal{C}$ be concentrated in the trivial degree,
	i.e. $g \neq e \implies \mathcal{C}_g \simeq 0$
	(or equivalently $\mathcal{C} \simeq \mathcal{C}_e$ as fusion categories),
	and the $G$-action be trivial.
	Then the invariant is a product of the Dijkgraaf-Witten invariant and the renormalised Crane-Yetter invariant \cite[Proposition 6.1.1]{BaerenzBarrett2016Dichromatic}:
	\begin{equation}
		I_\mathcal{C}(M) = DW_G(M) \cdot \widehat{CY}_{\mathcal{C}_e}(M)
	\end{equation}
	\fxnote{Note to self:
		This is right because here $\mathcal{C} \simeq \mathcal{C}_e$}
\end{proposition}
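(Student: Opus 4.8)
The plan is to start from the explicit sum formula \eqref{eq:invariant long} for $I_\mathcal{C}(K)$ on a convenient Kirby diagram and show it factorises into a group-counting part and a Crane–Yetter part. Since $\mathcal{C} \simeq \mathcal{C}_e$, every 2-handle gets labelled with $\Omega_\mathcal{C} = \Omega_e$, the grading is trivial everywhere (so the degree typing constraint is vacuous), and every $\deg(h_2) = e$. The only place the group $G$ enters is the outer sum $\sum_{g\colon K_3 \to G}$ over 3-handle labellings; by Remark \ref{rem:3-handle closed diagram}, each 3-handle acts trivially on the (closed) diagram since the $G$-action is trivial to begin with, so the summand $\langle K(g, \dots)\rangle$ does not depend on $g$ at all. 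Hence $I_\mathcal{C}(K) = |G|^{|K_3|} \cdot \langle K(X \mapsto \Omega_e, \iota \mapsto \text{dual bases})\rangle \cdot \qdim{\Omega_\mathcal{C}}^{|K_1| - |K_2|}$, where the remaining diagram evaluation is literally the renormalised Crane–Yetter evaluation of the underlying 2-handlebody (together with the extra 1-handle normalisation), i.e. $\widehat{CY}_{\mathcal{C}_e}(M)$ up to the bookkeeping factor that Proposition \ref{prop:Crane-Yetter} already keeps track of.

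First I would fix a Kirby diagram $K$ for $M$ with a single 0-handle and a single 4-handle (Theorem \ref{thm:One 0-handle and one n-handle}), and recall Proposition \ref{prop:Crane-Yetter}: if $K$ had no 3-handles then $I_\mathcal{C}(M)$ already equals $\widehat{CY}_{\mathcal{C}_e}(M)$, because the Euler-characteristic correction factor in that proposition is trivial when $\mathcal{C} \simeq \mathcal{C}_e$. So the content of the present proposition is entirely about the effect of 3-handles. Second, I would identify $|K_3|$ with a count of generators in the ``upside-down'' fundamental group presentation described in Section \ref{sec:fundamental group}: turning $K$ upside down, the 3-handles become 1-handles and the 2-handles become 2-handles of the dual decomposition, so $\pi_1(M)$ is presented with $|K_3|$ generators and $|K_2|$ relations; but in fact a cleaner route is to observe directly that $|G|^{|K_3|}$ overcounts and must be corrected by the 2-handle relations.

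Third — and this is the step that needs a little care rather than routine calculation — I would show that the true identity is $\sum_{g\colon K_3\to G} 1 = |\{\phi\colon \pi_1(M) \to G\}| \cdot |G|^{(\text{correction})}$ is \emph{not} what happens; rather, the naive statement ``each 3-handle labelling contributes equally'' is correct only after one checks that the diagram evaluation is genuinely constant in $g$, which follows from Remark \ref{rem:3-handle closed diagram} precisely because the action is trivial. With a trivial action there is no constraint coupling the $g$-labels to the rest of the diagram, so the sum is simply $|G|^{|K_3|}$ times a single evaluation. To match this with $DW_G(M) = |\{\phi\colon \pi_1(M) \to G\}|$, I would invoke the upside-down presentation: $DW_G(M)$ counts homomorphisms from a group with $|K_3|$ generators subject to $|K_2|$ relations, but when the action is trivial those relations (which, per the footnote in Section \ref{sec:fundamental group}, read off products of 3-handle generators in cyclic order around each 2-handle) must be imposed — so a priori $DW_G(M) \ne |G|^{|K_3|}$ in general, and the resolution is that in the trivial-grading case the 2-handles labelled $\Omega_e$ already force, inside the Crane–Yetter evaluation, exactly the constraint that collapses the over-counted $|G|^{|K_3|}$ down to $DW_G(M) \cdot |G|^{(\text{number of 2-handles not contributing a relation})}$, with the leftover powers of $|G|$ absorbed into $\widehat{CY}$'s own normalisation. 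The main obstacle is therefore the careful bookkeeping showing that the $G$-summation and the $\qdim{\Omega_\mathcal{C}}^{|K_1|-|K_2|}$ normalisation reorganise into exactly $DW_G(M) \cdot \widehat{CY}_{\mathcal{C}_e}(M)$; I expect the cleanest argument is to compare, handle move by handle move, with the known handle-decomposition description of untwisted Dijkgraaf–Witten theory (e.g. via \cite[Proposition 4.5]{Cui2016TQFTs}) rather than to massage \eqref{eq:invariant long} directly, since the factorisation of the evaluation is immediate once triviality of the action kills all coupling between 3-handle labels and the fusion-category data.
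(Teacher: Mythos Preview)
Your proposal contains a genuine error in the first step that propagates through the rest of the argument. You write that ``every 2-handle gets labelled with $\Omega_\mathcal{C} = \Omega_e$'' and that ``every $\deg(h_2) = e$'', but this is not how the invariant is defined. In Definition~\ref{def:invariant}, the 2-handle $h_2$ is labelled with $\Omega_{\deg(h_2)}$, where $\deg(h_2) = \prod_{(g,\pm)\in\delta h_2} g^\pm$ is the product of the \emph{3-handle labels} incident to $h_2$. This product is an element of $G$ determined by the choice of $g\colon K_3 \to G$, and it is \emph{not} forced to be $e$ just because $\mathcal{C}$ is concentrated in the trivial degree. What the hypothesis $\mathcal{C}_g \simeq 0$ for $g\neq e$ actually gives you is that $\Omega_g = 0$ for $g\neq e$, so any 3-handle labelling for which some $\deg(h_2) \neq e$ contributes zero to the sum. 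The nonvanishing terms are therefore exactly those $g\colon K_3 \to G$ satisfying $\deg(h_2) = e$ for every 2-handle $h_2$, and by the upside-down presentation of $\pi_1(M)$ in Section~\ref{sec:fundamental group} these are precisely the homomorphisms $\pi_1(M)\to G$. That is the mechanism in the paper's proof.

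Consequently your factor $|G|^{|K_3|}$ is wrong in general, and the subsequent attempt to ``collapse the over-counted $|G|^{|K_3|}$ down to $DW_G(M)$'' via some constraint hidden inside the Crane--Yetter evaluation, with leftover powers of $|G|$ absorbed into normalisation, is not what happens: there is no leftover. The count of nonzero summands is exactly $DW_G(M)$, and each such summand is exactly $\widehat{CY}_{\mathcal{C}_e}(M)$ (this last point you do identify correctly, via Proposition~\ref{prop:Crane-Yetter} with $\mathcal{C}\simeq\mathcal{C}_e$). Once you replace your vacuous-constraint claim with the vanishing $\Omega_g=0$ for $g\neq e$, the argument becomes short and matches the paper's.
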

\begin{proof}
	Any $G$-labelling on 3-handles for which any 2-handle is labelled with $\Omega_g$ such that $g \neq e$ does not contribute to the invariant since $\Omega_g = 0$ in that case.
	Recall the presentation of the fundamental group from Section \ref{sec:fundamental group}.
	The 3-handles are generators of $\pi_1(M)$, while the 2-handles give relations.
	A homomorphism from $\pi_1(M)$ to $G$ is presented by an assignment of a $G$-element for every 3-handle,
	such that the relations of $\pi_1(M)$ are satisfied.
	The former is given by a $G$-labelling,
	the latter is implemented by the fact that only those $G$-labellings contribute to the invariant where all 2-handles are labelled by $\Omega_e$.
	Thus the contributing $G$-labellings indeed run over all possible homomorphisms $\phi\colon \pi_1(M) \to G$.
	As in the proof of Proposition \ref{prop:Crane-Yetter},
	it is easy to see that each such contribution is just $\widehat{CY}_{\mathcal{C}_e}(M)$.
\end{proof}

\section{Recovering the state sum model}
\label{sec:ssm}

The main motivation for this work was the second open question in \cite[Section 7]{Cui2016TQFTs}.
There, a state sum model is defined for $G\times$-BSFCs in terms of triangulations,
but it is noted that the model is impractical to compute.
We will show now that the invariant defined here is equal to the state sum up to a classical factor,
giving an economical way to calculate the state sum.

The state sum takes a $G\times$-BSFC $\mathcal{C}$ and a manifold $M$ with compatible triangulation $\mathcal{T}$,
labels 1-simplices with group elements, 2-simplices with simple objects and 3-simplices with morphisms.
For every 4-simplex, a local partition function is defined.

There is a well-known procedure called \textbf{chain mail} to turn an invariant defined on Kirby diagrams into a state sum model.
A smooth triangulation gives rise to a handle decomposition,
and we have to pull back the definition along this procedure.
Chain mail has been described already in \cite[Section 4.3]{Roberts:1995SkeinTheoryTV}.
As we go through the steps, we refer to \cite[Section 5]{BaerenzBarrett2016Dichromatic} for details,
which matches our conventions and notation to a large degree.
\begin{definition}[Well-known]
	\label{def:dual handle decomposition}
	Every triangulation $\mathcal{T}$ of a smooth manifold $M$ gives rise to a handle decomposition,
	where the $k$-simplices are thickened to $(n-k)$-handles.
	This is called the \textbf{dual handle decomposition}.
	The set of $k$-simplices is denoted as $\mathcal{T}_k$.
\end{definition}
\begin{theorem}
	\label{thm:equal to SSM}
	Up to a factor involving the Euler characteristic,
	the invariant from Definition \ref{def:invariant}
	is equal to the state sum $Z_\mathcal{C}$ from \cite[(23)]{Cui2016TQFTs}.
	Explicitly, let $M$ be a connected manifold and $\mathcal{T}$ an arbitrary triangulation,
	then:
	\begin{equation}
		I_\mathcal{C}(M) = Z_\mathcal{C}(M; \mathcal{T}) \cdot \qdim{\Omega_\mathcal{C}}^{1 - \chi(M)} \cdot \lvert G \rvert
	\end{equation}
\end{theorem}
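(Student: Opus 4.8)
The plan is to follow the well-known chain mail strategy, which converts the handle-decomposition invariant $I_\mathcal{C}$ into a state sum over the dual handle decomposition of a triangulation and then matches it term by term with the definition of $Z_\mathcal{C}$. First I would invoke Definition \ref{def:dual handle decomposition}: given a triangulation $\mathcal{T}$ of $M$, the dual handle decomposition has one $(4-k)$-handle for each $k$-simplex. In particular, $4$-simplices become $0$-handles, $3$-simplices become $1$-handles, $2$-simplices become $2$-handles, $1$-simplices become $3$-handles, and $0$-simplices become $4$-handles. I would then describe the Kirby diagram of this decomposition explicitly — each tetrahedron's dual $1$-handle, each triangle's dual $2$-handle attached along the triangle's boundary tetrahedra, each edge's dual $3$-handle attached along the triangles containing that edge. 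The key observation is that under this dictionary, the labelling data of the state sum (group elements on $1$-simplices, simple objects on $2$-simplices, morphisms on $3$-simplices) correspond precisely to the labelling data of the Kirby diagram ($G$-labels on $3$-handles, $\Co[\mathcal{C}]$-labels on $2$-handles, dual-basis morphisms on $1$-handles).

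Next I would carry out the bookkeeping of normalisation factors. The invariant in Definition \ref{def:invariant} carries the prefactor $\qdim{\Omega_\mathcal{C}}^{\lvert K_1 \rvert - \lvert K_2 \rvert}$, which in dual-handle language is $\qdim{\Omega_\mathcal{C}}^{\lvert \mathcal{T}_3 \rvert - \lvert \mathcal{T}_2 \rvert}$, whereas the state sum $Z_\mathcal{C}$ in \cite[(23)]{Cui2016TQFTs} uses a different normalisation expressed in terms of $\lvert \mathcal{T}_0 \rvert$, $\lvert \mathcal{T}_1 \rvert$ and $\lvert G \rvert$. Since $M$ is connected I would use Theorem \ref{thm:One 0-handle and one n-handle} to reduce to one $0$-handle and one $4$-handle, cancelling superfluous $0$-$1$ and $3$-$4$ pairs; the cancellations are invariant by Lemma \ref{lem:invariance cancellations}, but each cancellation changes the simplex counts, so I must track how $\lvert \mathcal{T}_k \rvert$ enters. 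Collecting everything, the exponent of $\qdim{\Omega_\mathcal{C}}$ is reorganised using the identity $\chi(M) = \sum_k (-1)^k \lvert \mathcal{T}_k \rvert = \lvert \mathcal{T}_0 \rvert - \lvert \mathcal{T}_1 \rvert + \lvert \mathcal{T}_2 \rvert - \lvert \mathcal{T}_3 \rvert + \lvert \mathcal{T}_4 \rvert$, which produces the stated factor $\qdim{\Omega_\mathcal{C}}^{1 - \chi(M)}$, and the extra $\lvert G \rvert$ accounts for the global $G$-gauge redundancy (the fact that the $4$-handle, dual to a $0$-simplex, carries no label but the state sum sums over connections rather than gauge equivalence classes, or vice versa — the direction depends on \cite{Cui2016TQFTs}'s precise conventions).

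The core step is the local matching: I would show that evaluating the Kirby diagram of the dual handle decomposition, for a fixed labelling, factors as a product of local tensors, one per $0$-handle, i.e. one per $4$-simplex, and that each such local tensor equals (up to the normalisation already separated out) the $15j$-type symbol that \cite[(23)]{Cui2016TQFTs} assigns to a $4$-simplex. This is done by isotoping the diagram into a standard form near each dual $1$-handle cluster and reading off the graphical calculus of Section \ref{sec:labelled diagrams}: the crossed braidings, $G$-coherences and dual-basis contractions assemble into exactly Cui's local partition function, with the $G$-action data entering through the sheets dual to the $1$-simplices. The main obstacle will be this local identification — verifying that the particular combination of crossed braidings and coherence isomorphisms produced by our evaluation algorithm (Definition \ref{def:labelled diagram}) coincides on the nose with the morphism $Z_\mathcal{C}$ assigns to a $4$-simplex, including all the $G$-coherence bookkeeping and the sphericality-dependent conventions. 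I expect to delegate the most tedious part of this comparison to an appendix or to \cite[Section 5]{BaerenzBarrett2016Dichromatic}, whose chain-mail computation for the Crane-Yetter case our argument specialises to when $G$ is trivial, and to emphasise only the new ingredients: the $3$-handles dual to $1$-simplices and the group-element labels they carry.
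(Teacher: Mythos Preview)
Your chain-mail strategy and the dual-handle dictionary are exactly the paper's approach, and the local identification of the 4-simplex diagram with Cui's $Z^\pm_F(\sigma)$ is precisely Proposition~\ref{prop:4-simplex evaluation}. Where you diverge is in handling the multiple 0-handles and 4-handles of the dual decomposition. You invoke Lemma~\ref{lem:invariance cancellations} for 0--1 and 3--4 cancellations, but that lemma explicitly covers only 1--2 and 2--3 cancellations; 0--1 and 3--4 are assumed already performed before $I_\mathcal{C}$ is defined. More seriously, if you cancel \emph{existing} dual 1-handles (tetrahedra) to merge the $\lvert\mathcal{T}_4\rvert$ canvases into one, those tetrahedra no longer carry dual-basis labels, and the diagram no longer visibly factors as one closed piece per 4-simplex --- so your ``product of local tensors, one per 0-handle'' does not survive the very cancellations you perform to reach a single 0-handle.

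The paper avoids this by a different manoeuvre: instead of cancelling existing handles it \emph{adds} $\lvert\mathcal{T}_4\rvert-1$ fresh 1-handles (and as many fresh 3-handles to reclose the boundary), thereby passing to the auxiliary manifold $N = M \connsum^{\,\lvert\mathcal{T}_4\rvert-1} S^1\times S^3$. It is these new 1-handles that cancel the superfluous 0-handles, leaving every original dual handle intact; the Kirby diagram of $N$ is then a genuine disjoint union of 4-simplex pictures and factors on the nose. One recovers $I_\mathcal{C}(M)$ from $I_\mathcal{C}(N)$ via Proposition~\ref{pro:connected sum} together with the computed value $I_\mathcal{C}(S^1\times S^3)=\lvert G\rvert\cdot\qdim{\Omega_\mathcal{C}}$, and the powers of $\lvert G\rvert$ and $\qdim{\Omega_\mathcal{C}}$ then drop out of this bookkeeping directly, with no separate gauge-fixing argument required.
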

\begin{proof}
	We begin with the dual handle decomposition of $\mathcal{T}$.
	It does not have a single 0-handle and 4-handle,
	thus we cannot directly calculate $I$ of this decomposition.
	The strategy is to slightly modify the manifold until we can calculate the invariant,
	and recover the original value from there.
	Each 0-handle can be regarded as a drawing canvas
	which contains the Kirby diagram of the corresponding 4-simplex $\sigma$.
	Proposition \ref{prop:4-simplex evaluation} will show that the evaluation of such a diagram
	(denoted here as $\langle \sigma \rangle$)
	is just the quantity $Z^\pm_F(\sigma)$ from \cite[Section 3.1]{Cui2016TQFTs}.

	In order to join all drawing canvases by cancelling the excessive 0-handles and,
	we attach $\lvert \mathcal{T}_4 \rvert - 1$ 1-handles
	(recall that the handle decomposition had $\lvert \mathcal{T}_4 \rvert$ 0-handles to begin with).
	Since the resulting manifold is not closed,
	we attach $\lvert \mathcal{T}_4 \rvert - 1$ 3-handles to cancel the boundary.
	The resulting manifold is $N \coloneqq M \connsum^{\lvert \mathcal{T}_4 \rvert - 1} S^1 \times S^3$,
	for which we know from Proposition \ref{pro:connected sum}:
	\begin{equation}
		I_\mathcal{C}(M) = I_\mathcal{C}(N) \cdot \lvert G \rvert^{1 - \lvert \mathcal{T}_4 \rvert} \qdim{\Omega_\mathcal{C}}^{1 - \lvert \mathcal{T}_4 \rvert}
	\end{equation}
	We know that still $\lvert \mathcal{T}_0 \rvert - 1$ 3-handles will be cancelled by excessive 4-handles,
	then we can calculate $I_\mathcal{C}(N)$ diagrammatically.
	Since the diagram for $N$ is a disjoint union of diagrams for each 4-simplex $\sigma \in \mathcal{T}_4$
	(with $\lvert \mathcal{T}_4 \rvert - 1$ 3-handles added and $\lvert \mathcal{T}_0 \rvert - 1$ removed,
	 each incurring a factor of $\lvert G \rvert$),
	it factors as a product like in the proof of Lemma \ref{lem:multiplicativity diagrams},
	and we can infer from \eqref{eq:invariant long}:
	\begin{equation}
		I_\mathcal{C}(N)
			= \sum_{g,X} \qdim{\Omega_\mathcal{C}}^{\lvert \mathcal{T}_3 \rvert - \lvert \mathcal{T}_2 \rvert}
			  \lvert G \rvert^{\lvert \mathcal{T}_4 \rvert - \lvert \mathcal{T}_0 \rvert}
			  \prod_{h_2} \qdim{X_{h_2}}
			  \prod_{\sigma \in \mathcal{T}_4}
			  \langle \sigma \rangle
	\end{equation}
	We compare with \cite[(23)]{Cui2016TQFTs},
	which uses the notation $D^2 = \qdim{\Omega_\mathcal{C}}$ and labels objects with $f$,
	and defines (in our notation):
	\begin{equation}
		Z_\mathcal{C}(M; \mathcal{T})
			= \sum_{g,X} \qdim{\Omega_\mathcal{C}}^{\lvert \mathcal{T}_0 \rvert - \lvert \mathcal{T}_1 \rvert}
			  \lvert G \rvert^{-\lvert \mathcal{T}_0 \rvert}
			  \prod_{h_2} \qdim{X_{h_2}}
			  \prod_{\sigma \in \mathcal{T}_4}
			  \langle \sigma \rangle
	\end{equation}
	We combine all three equations and recall that the Euler characteristic is $\chi(M) = \lvert \mathcal{T}_0 \rvert - \lvert \mathcal{T}_1 \rvert + \lvert \mathcal{T}_2 \rvert - \lvert \mathcal{T}_3 \rvert + \lvert \mathcal{T}_4 \rvert$
	to verify the claim.
\end{proof}
\begin{proposition}
	\label{prop:4-simplex evaluation}
	The Kirby diagram of a 4-simplex $\sigma$ evaluates to the quantity $Z^\pm_F(\sigma)$ from \cite[Section 3.1]{Cui2016TQFTs}.
\end{proposition}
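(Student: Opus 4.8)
The plan is to make the correspondence between the faces of $\sigma$ and the handles of its dual decomposition completely explicit, and then to verify that the evaluation algorithm of Definition \ref{def:labelled diagram} reproduces the closed formula for $Z^\pm_F(\sigma)$ in \cite[Section 3.1]{Cui2016TQFTs} term by term. This is the $G$-crossed analogue of the localised chain-mail computation carried out for Crane-Yetter in \cite[Section 5]{BaerenzBarrett2016Dichromatic}, and I would follow that outline.

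First I would describe the local picture. By Definition \ref{def:dual handle decomposition}, $\sigma$ is dual to a $0$-handle whose remaining region is the drawing canvas in question. Inside it one sees one thickened point for each of the five tetrahedral faces of $\sigma$ (a $3$-simplex is dual to a $1$-handle, of which $\sigma$ meets one attaching ball), one ribbon for each of the ten triangular faces (dual to a $2$-handle), and one disk for each of the ten edges (dual to a $3$-handle); the five vertices are dual to $4$-handles and lie off-canvas. Combinatorially this is the complete graph $K_5$ on the five points, with a disk spanning each of its triangles, and fixing an ordering of the vertices of $\sigma$ rigidifies the configuration up to isotopy. A labelling of this Kirby diagram in the sense of Section \ref{sec:invariant} then amounts to exactly the data of the state sum: group elements on edges, simple objects on triangles, morphisms on tetrahedra.

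Next I would match the typing relations. The degree of the object on a triangle-ribbon is forced to be the ordered product of the group elements labelling the incident sheets (Definition \ref{def:labelling}), and these sheets are precisely the $3$-handles dual to the three edges of that triangle, so this is exactly the admissibility condition of \cite[Section 3.1]{Cui2016TQFTs}; likewise the morphism on a tetrahedron-point lies in $\langle \bigotimes X^\pm \rangle$ over its four triangular faces. With the combinatorics aligned, I would run the evaluation algorithm of Definition \ref{def:labelled diagram}: crossings of ribbons insert crossed braidings, sheets covering lower strata insert the $G$-action functors, and ribbons meeting fold lines insert $G\times$-coherences. Tracking the planar projection of the standard $4$-simplex diagram, these ingredients assemble into exactly the $F$-symbols and crossed braidings of the closed formula for $Z^\pm_F(\sigma)$. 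The sign records the orientation of $\sigma$: reversing it mirrors the diagram, replacing every crossing and twist by its inverse, which is the passage from $Z^+_F$ to $Z^-_F$.

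The step I expect to be the main obstacle is this last matching: choosing a convenient standard projection of the $4$-simplex diagram and keeping track of exactly which coherence isomorphism is inserted at each crossing and each fold line, so that the result agrees with Cui's formula including all prefactors and the placement of the group actions. This is routine in spirit but combinatorially heavy; I would organise it face by face, using naturality of the crossed braiding and the $G\times$-coherence (heptagon) axioms to absorb any discrepancy coming from the order in which the elementary moves are applied, in close analogy with the $15j$-symbol computation in \cite{BaerenzBarrett2016Dichromatic}.
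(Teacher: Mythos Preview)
Your proposal is correct and follows essentially the same route as the paper: describe the dual handle decomposition of $\sigma$ combinatorially (five $1$-handle balls, ten $2$-handle ribbons forming $K_5$, ten $3$-handle sheets from the edges), check the typing relations reproduce Cui's admissibility conditions, and then read off $Z^\pm_F(\sigma)$ from the projected diagram. The paper's argument is terser and largely defers to \cite[Section 5]{BaerenzBarrett2016Dichromatic} for everything except the $3$-handles.

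One point worth noting: the step you flag as ``combinatorially heavy'' is lighter than you expect. The paper observes that the projection can be chosen so that the number of crossings is minimal and, crucially, so that exactly one $3$-handle sheet covers each $1$-handle attaching disk (this matches the convention of \cite[Figure 13 (left)]{Cui2016TQFTs}). With that choice the group action on each tetrahedron morphism is by a single group element rather than a product, and the comparison with Cui's $25j$-symbol becomes essentially a direct identification rather than a long coherence chase. So rather than organising the verification face by face and absorbing discrepancies via the heptagon axioms, you can front-load the work into choosing the right isotopy representative of the diagram.
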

\begin{proof}
	The only novelty over \cite[Section 4.3]{Roberts:1995SkeinTheoryTV}
	and \cite[Section 5]{BaerenzBarrett2016Dichromatic}
	is the appearance of 3-handles in the Kirby diagram of a 4-simplex.
	Each 3-handle is given by a 1-simplex, or edge, in the triangulation,
	and is thus specified by its two end vertices.
	For any third vertex (of which there exist three),
	a 2-simplex, or triangle, exists such that the edge is part of the boundary of the triangle,
	and thus the 3-handle attaches to the corresponding 2-handle.
	There are five 3-simplices (each opposite a vertex)
	which are thickened to five 1-handles.
	These connect the 4-simplex to any neighbouring 4-simplex,
	and thus only one of the attaching $D^3$ is visible in the diagram.
	Since a 3-simplex has four boundary triangles,
	four 2-handles attach to each 1-handle.
	The resulting diagram is shown in Figure \ref{fig:4-simplex},
	but not all 3-handles are shown, for clarity.
	The diagram is not unique since any 3-dimensional isotopy can be applied to it,
	but it is favourable to minimise the number of crossings to keep calculations simple.
	Similarly, it is desirable to minimise the number of 3-handles covering a 1-handle attaching disk.
	The minimum number is one each,
	and it is achieved in the diagram,
	following the convention of \cite[Figure 13 (left)]{Cui2016TQFTs}.
	\fxwarning{this proof is still a bit terse}
\end{proof}
\begin{figure}
	\centering
	\pgfdeclarelayer{nodes}
	\pgfsetlayers{main,nodes}
	\begin{tikzpicture} [ looseness = 1.5 ]
		\tikzmath{
			\angleo = 70;
			\anglei = 25;
			\anglex = 70;
			\angley = 10;
		}
		\begin{pgfonlayer}{nodes}
		\node [ 1-handle ] (h0134) at (-1,  2) {};
		\node [ 1-handle ] (h0124) at (-1,  0) {};
		\node [ 1-handle ] (h1234) at ($(h0134)+(-\anglei:2.5cm)$) {};
		\node [ 1-handle ] (h0234) at ($(h1234)+(180+\anglex:2.5cm)$) {};
		\end{pgfonlayer}
		\node [ 1-handle ] (h0123) at ($(h0134)+ (\anglei:2.5cm)$) {};
		\coordinate (inbetween) at (3.6, 2);

		\draw [ diagram ]
			(h0123) to [ out = -20, in = -20 ] (h0124)
			(h0123) to [ out =  10, in =  30 ] (h0234)
			(h0134) to [ out =  \angleo, in =  90 ] (inbetween)
			        to [ out = -90, in = -\angley ] (h0234)
		;
		\draw [ diagram ]
			(h0124) -- (h0234)
			(h0123) -- (h1234)
			(h0124) -- (h1234)
			(h0134) -- (h0123)
			(h0134) -- (h0124)
			(h0134) -- (h1234)
			(h0234) -- (h1234)
		;
		\path [ 3-handle ]
			(h0134.\angleo)
				to [ out =  \angleo, in =  90 ]
				(inbetween)
				to [ out = -90, in = -\angley ]
			(h0234.-\angley)
			--
			(h0234.\anglex)
			--
			(h1234.180+\anglex)
			--
			(h1234.180-\anglei)
			--
			(h0134.-\anglei)
			--
			(h0134.\angleo)
		;
	\end{tikzpicture}
	\caption{The Kirby diagram of a 4-simplex,
		showing only the nontrivially acting 3-handle.
	}
	\label{fig:4-simplex}
\end{figure}

\subsection{From state sum models to TQFTs}
\label{sec:SSMs to TQFTs}
There is a well-known recipe to define a TQFT $\mathcal{Z}_\mathcal{C}$ from a topological state sum model \cite{TuraevViro:1992865} such that $\mathcal{Z}_\mathcal{C}(M^4) = Z_\mathcal{C}(M^4)$ for a 4-manifold,
where $M^4$ is regarded as a cobordism from the empty manifold to itself.
The dimensions of the state spaces $\mathcal{Z}_\mathcal{C}(N^3)$ assigned to boundary 3-manifolds are of interest and can be calculated as $\dim \mathcal{Z}_\mathcal{C}(N^3) = \tr 1_{\mathcal{Z}_\mathcal{C}(N^3)} = \mathcal{Z}_\mathcal{C}(S^1 \times N^3) = Z_\mathcal{C}(S^1 \times N^3)$.
Using Theorem \ref{thm:equal to SSM},
this calculation is now much easier than directly from the state sum:
\begin{corollary}
	The dimension of the state space assigned to a 3-manifold $N$ is:
	\begin{equation}
		\dim(\mathcal{Z}_\mathcal{C}(N)) = I_\mathcal{C}(S^1 \times N) \cdot \qdim{\Omega_\mathcal{C}}^{-1} \cdot \lvert G \rvert^{-1}
	\end{equation}
\end{corollary}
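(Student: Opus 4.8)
The plan is to combine the trace interpretation of dimensions in the TQFT $\mathcal{Z}_\mathcal{C}$, recalled in the paragraph preceding the corollary, with Theorem \ref{thm:equal to SSM}, which relates $Z_\mathcal{C}$ and $I_\mathcal{C}$ up to an Euler-characteristic factor. First I would rewrite Theorem \ref{thm:equal to SSM} in the equivalent form $Z_\mathcal{C}(M; \mathcal{T}) = I_\mathcal{C}(M) \cdot \qdim{\Omega_\mathcal{C}}^{\chi(M) - 1} \cdot \lvert G \rvert^{-1}$, valid for any connected, smooth, oriented, closed 4-manifold $M$ and any triangulation $\mathcal{T}$ (the independence of the right-hand side from $\mathcal{T}$ being part of that statement).

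Next I would apply this identity with $M = S^1 \times N$. Assuming $N$ is connected (so that $S^1 \times N$ is too), smooth, closed and oriented, the product $S^1 \times N$ is again a connected, smooth, oriented, closed 4-manifold, so the rewritten theorem applies. The only computation needed is $\chi(S^1 \times N) = \chi(S^1) \cdot \chi(N) = 0$, using multiplicativity of the Euler characteristic under products together with $\chi(S^1) = 0$. Substituting $\chi(M) = 0$ yields $Z_\mathcal{C}(S^1 \times N) = I_\mathcal{C}(S^1 \times N) \cdot \qdim{\Omega_\mathcal{C}}^{-1} \cdot \lvert G \rvert^{-1}$.

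Finally, I would invoke the chain of equalities $\dim(\mathcal{Z}_\mathcal{C}(N)) = \tr 1_{\mathcal{Z}_\mathcal{C}(N)} = \mathcal{Z}_\mathcal{C}(S^1 \times N) = Z_\mathcal{C}(S^1 \times N)$, which holds by the general state-sum-to-TQFT construction and the identity $\mathcal{Z}_\mathcal{C}(M^4) = Z_\mathcal{C}(M^4)$; combining with the previous step gives the claimed formula.

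There is essentially no serious obstacle here: the corollary is a bookkeeping consequence of Theorem \ref{thm:equal to SSM} and the standard trace interpretation of TQFT dimensions. The only point requiring a word of care is the connectedness hypothesis needed to invoke Theorem \ref{thm:equal to SSM}; if one wants the statement for a disconnected $N$, it follows by checking that both sides are multiplicative under disjoint union (the right-hand side using $S^1 \times (N_1 \sqcup N_2) = (S^1 \times N_1) \sqcup (S^1 \times N_2)$ together with Lemma \ref{lem:multiplicativity diagrams}, and the left-hand side using monoidality of the TQFT), and that $\qdim{\Omega_\mathcal{C}}^{-1} \cdot \lvert G \rvert^{-1}$ is the correct per-component normalisation.
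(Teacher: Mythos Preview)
Your proposal is correct and follows essentially the same route as the paper: invoke Theorem \ref{thm:equal to SSM} with $M = S^1 \times N$, use multiplicativity of the Euler characteristic to get $\chi(S^1 \times N) = 0$, and combine with the trace identity $\dim \mathcal{Z}_\mathcal{C}(N) = Z_\mathcal{C}(S^1 \times N)$ recalled just before the corollary. The paper's own proof is a two-line version of exactly this; your additional remark on the disconnected case is extra care the paper does not spell out.
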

\begin{proof}
	Note that the Euler characteristic is multiplicative and $\chi(S^1 \times M) = \chi(S^1) = 0$.
	We continue the calculation of $Z_\mathcal{C}(S^1 \times M)$ to arrive at the result.
\end{proof}
\begin{examples}
	\begin{itemize}
		\item
			One readily verifies that $\dim(\mathcal{Z}_\mathcal{C}(S^3)) = 1$.
			It is well-known that such a TQFT is,
			up to a factor of $\mathcal{Z}_\mathcal{C}(S^4)^{-1}$,
			multiplicative under connected sum,
			in agreement with Proposition \ref{pro:connected sum}.
		\item
			For a faithfully graded $G\times$-BSFC,
			we calculate $\dim(\mathcal{Z}_\mathcal{C}(S^1 \times S^2)) = \lvert G \rvert \cdot \lvert \mathcal{O}(\mathcal{C}_e') \rvert \cdot \qdim{\Omega_e} \cdot \qdim{\Omega_\mathcal{C}}^{-1} = \lvert \mathcal{O}(\mathcal{C}_e') \rvert$,
			showing that the theory is non-invertible when $\mathcal{C}_e$ is not modular.
		\item
			For the two examples from $\Z_2$ acting on $\Vect_{\Z_3}$,
			we get
			$\dim\left(\mathcal{Z}_{\mathcal{C}_{\Z_3}^{\Z_2}}(S^1 \times S^2)\right) = 6$
			and $\dim\left(\mathcal{Z}_{\widetilde{\mathcal{C}}_{\Z_3}^{\Z_2}}(S^1 \times S^2)\right) = 3$.
	\end{itemize}
\end{examples}

\subsection{Discussion: Defining a TQFT directly from handle decompositions}
\label{sec:TQFT directly}
Manifolds with nonempty boundary are described by handle decompositions as well \cite[Section 5.5]{GompfStipsicz}.
In fact, handle attachments can be seen as the fundamental generators of bordisms,
and TQFTs can be defined naturally in terms of them \cite{JuhaszTQFTsSurgery14}.

An arbitrary Kirby diagram
(one that does not necessarily correspond to a closed manifold)
specifies both a boundary 3-manifold $N$
and a bordism $M\colon \emptyset \to N$.
The boundary $N$ is constructed via \emph{surgery},
as per Remark \ref{rem:surgery}.
It is in principle possible to define a TQFT directly from handle decompositions of bordisms,
but doing so rigorously is beyond the scope of this article and is left as future work.

In Atiyah's axiomatisation of TQFTs \cite{Atiyah1988TQFT},
a vector space $\mathcal{Z}(N)$ is assigned to every boundary manifold $N$,
and a vector $\mathcal{Z}(M) \in \mathcal{Z}(\partial M)$
is assigned to every top-dimensional manifold.
We informally propose these two constructions for a $G\times$-BSFC $\mathcal{C}$.
(It should also be possible to generalise the approach to functorial TQFTs,
 and repeat the constructions for bordisms $M\colon N_1 \to N_2$ with a nonempty domain $N_1$ by using relative Kirby calculus.)

In analogy to the Turaev-Viro model \cite{Kirillov2011:Stringnet},
one would define a \emph{string net space},
or ``skein space'',
for $\mathcal{Z}(N)$.
Essentially, it would be defined as the vector space over $\mathcal{C}$-labelled diagrams
(as in Definition \ref{def:labelling})
\emph{embedded in} $N$,
modulo local relations in $\mathcal{C}$
(the evaluation from Definition \ref{def:labelled diagram}).
For $S^3$, this space is then tautologically 1-dimensional since the labelled diagrams are defined to be evaluated on it.
But for a more complicated manifold $N$,
the dimension of its state space should be higher,
and should in particular coincide with the values $I_\mathcal{C}(S^1 \times N)$ derived in the previous subsection.

For a given Kirby diagram $K$,
the manifold $S^3(K)$ is defined as surgery on $S^3$ along the spheres in $K$.
The string net space $\mathcal{Z}(S^3(K))$ has then an easier description as $\mathcal{C}$-labelled diagrams embedded in $S^3 \backslash K$,
modulo local relations and Kirby moves.
(The latter are a complete set of moves that relate any two surgery diagrams of diffeomorphic 3-manifolds.)

The vacuum state assigned to $S^3(K)$
(i.e. the vector corresponding to $M\colon \emptyset \to N$,
 where $K$ is a Kirby diagram for $M$)
should then simply be the empty diagram.
This could confuse at first since different $M$ potentially give rise to different vacua,
but note that the vacuum state is only the empty diagram in the particular surgery diagram $K$,
and would usually not be mapped to the vacuum state of $S^3(K')$ under a diffeomorphism $S^3(K) \cong S^3(K')$,
for a different diagram $K'$ corresponding to a non-diffeomorphic bordism $M'\colon \emptyset \to N$.

\section{Spherical fusion 2-categories and related work}
\label{sec:3-cats}
This section is kept in an informal discussion style.

\medskip

One reason to adopt the graphical calculus presented in Section \ref{sec:Graphical calculus} was to stay close to the language of Kirby diagrams with 3-handles,
but another reason was to imitate the graphical calculus of Gray categories (semistrict 3-categories) with duals \cite{Schaumann:GrayCats}.
The main reason not to translate Kirby diagrams into Gray category diagrams is to take advantage of the graphical calculus of pivotal categories,
which is a considerable shortcut.

$G\times$-BSFCs can be seen as monoidal 2-categories \cite{Cui2016TQFTs, DouglasReutter2018fusion},
which in turn can be seen as one-object 3-categories,
this approach is thus natural.
The translation of $G\times$-BSFCs into monoidal 2-categories is summarised in Table \ref{table:translation Gx-SFCs to monoidal 2-categories}.
\begin{table}
	\centering
	\begin{tabular}{lll}
		\toprule
		Datum in a $G\times$-BSFCs
			& Notation
				& Datum in the monoidal 2-category
		\\\midrule
		Group element
			& $g \in G$
				& Object
		\\\midrule
		Grade
			& $\mathcal{C}_g$
				& Endocategory
		\\\midrule
		Object
			& $A \in \mathcal{C}$
				& 1-morphism
		\\\midrule
		Morphism
			& $f\colon A \to B$
				& 2-morphism
		\\\midrule
		Group multiplication
			& $g_1 g_2$
				& Monoidal product
		\\\midrule
		Group inverse
			& $g^{-1}$
				& Duality
		\\\midrule
		Group action
			& $\acts{g}{A}$
				& 1-morphism composition
		\\\midrule
		Monoidal product
			& $A \otimes B$
				& 1-morphism composition
		\\\midrule
		Crossed braiding
			& $c_{X,Y}\colon X \otimes Y \to \acts{g}Y \otimes X$
				& Interchanger
		\\\bottomrule
	\end{tabular}
	\caption{The translation of $G\times$-BSFCs to monoidal 2-categories.
		Summarized from \cite[Section 6]{Cui2016TQFTs} and \cite[Constructions 2.1.23 and 2.3.6]{DouglasReutter2018fusion}.}
	\label{table:translation Gx-SFCs to monoidal 2-categories}
\end{table}

Since \cite{Mackaay:Sph2cats}, there is a search for a good categorification of spherical fusion categories to the world of monoidal 2-categories.
The definition proposed there was soon found to be too restrictive,
but it was long an open problem to find a more general and still well-behaved definition.
It is shown in \cite{Cui2016TQFTs} explicitly that $G\times$-BSFCs already yield more general monoidal 2-categories,
but still a good definition of spherical fusion 2-category is expected to be much more general.

\cite{DouglasReutter2018fusion} offers a detailed and promising definition,
relates it to \cite{Cui2016TQFTs},
and defines a state sum model.
It is to be expected that the invariant based on Kirby diagrams with 3-handles presented here can be generalised to that notion of spherical fusion 2-categories,
and that it will coincide with that state sum.
Some elements of this article that played no role in the invariant,
such as the fold lines of sheets,
are then less trivial to handle.
The details of this correspondence are left for future work,
but one can speculate that the handle picture will once again allow for much more efficient computations.

In the following discussion,
we want to informally discuss such a generalisation,
and digress on a good ``higher spherical axiom''.

\paragraph{4-cocycles and pentagonators}
The Dijkgraaf-Witten model is usually defined for the datum of a finite group $G$
\emph{and} a 4-cocycle $\omega \in C^4(G, U(1))$
(with cohomologous cocycles giving rise to equivalent theories).
If $[\omega] \neq 0$, the theory is called ``twisted'',
but in Section \ref{sec:dw},
only the untwisted Dijkgraaf-Witten model occurred.
In \cite[Section 4.4]{Cui2016TQFTs},
the state sum model is generalised to include a 4-cocycle,
and the twisted Dijkgraaf-Witten model is recovered.

The reason seems to be that the monoidal 2-category defined from a $G\times$-BSFC is very strict.
In a general monoidal bicategory
(see e.g. \cite[Appendix B.4]{Schommer-Pries:PhD}),
there exist associators for the monoidal product,
and even these do not satisfy the pentagon axiom on the nose,
but rather up to an invertible modification,
the \textbf{pentagonator}.
In a monoidal 2-category from a $G\times$-BSFC,
the monoidal product is associative though,
and the pentagonator is trivial.
\fxnote{I'd like a remark that a Gx-BSFC together with a pentagonator should give a monoidal 2-cat, and its DR-model should be Cui's \cite[Section 4.4]{Cui2016TQFTs}.
	But unfortunately DR don't have pentagonators as explicit structure.
	So all their structure goes into the braiding?
	And they can recover pentagonator symbols in a good skeletalisation.
}
This seems quite restrictive,
and we would want to allow for a weaker structure.

Still with an associative monoidal product,
the pentagonator can contain nontrivial data.
It consists of an invertible automorphism of the identity 1-morphism for the tensor product of every four objects,
satisfying the ``associahedron equation''.
For the case of monoidal 2-categories from $G\times$-BSFCs,
one can verify that pentagonators in fact correspond to 4-cocycles of $G$.
It would be interesting to study to what data all further coherences of monoidal 2-categories correspond,
and to define the invariant, or the state sum model,
in terms of it.

Note though that in \cite[Definition 3.3.8]{DouglasReutter2018fusion},
no pentagonator occurs as structure of the monoidal category,
but it is strictified, as in Gray categories.

\paragraph{The 3-spherical axiom}
The 3-dimensional Turaev-Viro-Barrett-Westbury model
(\cite{TuraevViro:1992865, BarrettWestbury:1993Invariants})
is most generally defined for a \emph{spherical} fusion category.
The spherical axiom demands that the left and right traces defined by the pivotal structure are equal.
Given a diagram in $\R^2$ of a morphism in a spherical fusion category,
this axiom is equivalent to embedding the diagram in $S^2 \cong \R^2 \cup \infty$
and allowing any line to be isotoped past $\infty$.
Unsurprisingly, this is the lower-dimensional analogon of the 3-$\infty$-move from Section \ref{sec:slides}:
3-manifolds have handle decompositions,
and when depicting them in the plane,
there is a ``2-$\infty$-move'' between different diagrams of the same decomposition.
The graphical calculus in the category must correspondingly validate this move.
(One might argue that the move should thus be called the ``2-spherical'' move,
since it takes place on a 2-sphere.)

It is only natural to require a ``3-spherical axiom'' in any suitable definition of spherical fusion 2-categories which corresponds to the 3-$\infty$-move.
In those 2-categories which come from $G\times$-BSFCs,
this move is trivially true \cite[Example 2.3.6]{DouglasReutter2018fusion},
since group elements act trivially on the tensor unit of the category.

Spherical fusion 2-categories satisfy precisely this move \cite[Definition 2.3.2]{DouglasReutter2018fusion},
justifying its name and its relevance.
In this greater generality, it becomes a necessary ingredient to define dimensions of objects of 2-categories.
In the graphical calculus,
a trivially attached 3-handle (Figure \ref{fig:illustrations attaching regions})
would evaluate to the dimension of its label.
In $G\times$-BSFCs, these dimensions are always 1, and thus irrelevant,
but in general, 3-handles would play a bigger computational role.

\paragraph{Defect theories and orbifold data}
It seems fruitful to compare to orbifold theories of defect theories.
The Turaev-Viro-Barrett-Westbury model is an orbifold TQFT of the trivial 3d defect theory \cite{CarquevilleRunkelSchaumann2017Orbifolds},
and the parametrising spherical fusion categories are algebras in a tricategory.
Gray categories are naturally objects in a (yet to be rigorously defined) tetracategory,
and one would hope that the Gray category derived from a $G\times$-BSFC as well as the orbifold data of the trivial 4d defect theory carry the appropriate algebra structure satisfying the correct higher sphericality axiom.

\section*{Acknowledgements}

The author wishes to thank the Peters-Beer-Stiftung for generous financial support.
For helpful discussions and correspondence,
thank you,
Daniel Scherl, Gregor Schaumann, Nils Carqueville, Flavio Montiel Montoya, Sascha Biberhofer, Dominic Williamson, Alex Bullivant, Bruce Bartlett, Andras Juhász, Dominik Wradzidlo,
Chris Douglas, David Reutter, Shawn Cui, Noah Snyder, Marcel Bischof, Arun Debray,
and the anonymous reviewer.

\appendix

\section{Manifolds with corners}
\label{sec:corners}

A closed $n$-manifold is defined to be locally homeomorphic to $\R^n$,
whereas a manifold with boundary is locally homeomorphic to
(a neighbourhood in) $\R^{n-1} \times \R_+$,
with $\R_+ \coloneqq [0, \infty)$.
Closed manifolds have products,
but defining a product for manifolds with boundary is not straightforward since corners arise.

A (smooth) $n$-manifold with $k$-corners is locally homeomorphic to $\R^{n-k} \times \R^k_+$.
A closed manifold thus has 0-corners,
a manifold with boundary has 1-corners,
and the product of two manifolds with boundaries is naturally a manifold with 2-corners.

Since as topological spaces
$\R^{n-1} \times \R_+ \cong \R^{n-k} \times \R^k_+$,
the notion of corners only becomes relevant in the context of smooth manifolds.
The boundary of a smooth $n$-manifold with $k$-corners is subtle to define.
The naive definition, in which a point is on the boundary whenever any of the last $k$ coordinates is 0,
does not yield a smooth manifold,
only a topological manifold.
In a sensible definition like \cite[Definition 2.6]{Joyce2009ManifoldsCorners},
the boundary is a smooth $(n-1)$-manifold with $(k-1)$-corners,
but it is in general not a submanifold.
Rather, $j$ disjoint copies are made for every $j$-corner point,
and the boundary is immersed in the $n$-manifold.
For example, the archetypal corner manifold $\R^2_+$ has as boundary $\R_+ \times \{0\} \sqcup \{0\} \times \R_+$,
where the origin $(0, 0)$ appears twice.

In this light, handles and their diverse regions are much easier to understand conceptually.
The $k$-disk $D^k$ is a manifold with boundary for $k \geq 2$,
and thus an $n$-dimensional $k$-handle $h_k = D^k \times D^{n-k}$ is generally an $n$-manifold with 2-corners.
Its boundary is the disjoint union $S^{k-1} \times D^{n-k} \sqcup D^k \times S^{n-k-1}$,
and we have called the former ``attaching region'' and the latter ``remaining region''.
It is an $(n-1)$-manifold with 1-corners, or simply with boundary.

As a topological manifold, all $n$-dimensional $k$-handles are homeomorphic to $D^n$,
reflecting the fact that we can glue the attaching region and the remaining region together along their common boundary $S^{k-1} \times S^{n-k-1}$
to arrive at the sphere $S^{n-1}$.

Glueing two handles along their boundary regions results in a manifold with corners,
but usually these corners are of index 2.
There is a canonical way to ``smoothen'' these corners
\cite[Remark 1.3.3]{GompfStipsicz},
so a handle body again becomes a manifold with boundary after the handle attachment.

As a simple example, we define $n$-gons:
\begin{definition}
	\label{def:n-gons}
	The \textbf{$n$-gon} $G_n$ is the unique oriented, simply connected 2-manifold with corners whosel boundary $\partial G_n$ consists of $n$ copies of the interval.
	In particular:
	\begin{itemize}
		\item $G_0 = S^2$ is the 2-sphere.
		\item $G_1$ is the ``teardrop'' manifold (see e.g. \cite[Section 3.1, Figure 1]{Schommer-Pries:PhD})
		\item $G_3$ is the 2-simplex.
	\end{itemize}
	For $n > 0$, the underlying topological manifold is the 2-disc $D^2$,
	but its smooth corner structure is different.
\end{definition}

\section{Details on Kirby calculus and diagrams}
\label{sec:details on diagrams}

\subsection{Kirby convention}
\fxwarning{Maybe restate lemma here?}
\begin{proof}[{Proof of Lemma \ref{lem:establish Kirby conventions}}]
	We have to show that any handle decomposition with a single 0-handle and 4-handle can be isotoped such that it is \emph{regular}
	(Definition \ref{def:regular handle decomposition})
	and satisfies the \emph{single picture conventions}
	(Definition \ref{def:single picture conventions}).

	Regularity can always be achieved by isotoping the attaching regions,
	see e.g. \cite[Proposition 4.2.7]{GompfStipsicz}.

	Let us show the two conditions for the single picture conventions:
	\begin{description}
		\item [2-1, 3-1]
			This has been illustrated in Figure \ref{fig:pushing 1-handle}.
			Inside the remaining region $\remainingboundary h_1 = [-1, +1] \times S^2$ of a 1-handle,
			apply an isotopy such that all attachments intersect $\{0\} \times S^2$ transversely
			in a finite subset (for a 2-handle attachment) or a 1-dimensional manifold (for a 3-handle attachment).
			By transversality,
			this can be extended to a small neighbourhood $(-\varepsilon, +\varepsilon) \times S^2$,
			and then pushed out of $\remainingboundary h_1$ with an isotopy into the main drawing canvas.
		\item [3-2]
			Analogously to the situation for 1-handles,
			focus on $\{0\} \times S^1 \subset D^2 \times S^1 = \remainingboundary h_2$ inside the remaining region of a 2-handle,
			and isotope any 3-handle attachment such that it intersects this circle transversely in a finite set of points.
			Extend to $D^2_\varepsilon \times S^1$
			(where $D^2_\varepsilon$ is a disk around 0 with radius $\varepsilon$)
			and isotope out of the remaining region by enlarging this disk,
			possibly pushing all folds outside into the drawing canvas.
	\end{description}
\end{proof}

\subsection{Blackboard framing}
\label{sec:blackboard framing}
An embedded thickened ribbon is given,
up to isotopy,
by a framed ribbon,
which is a ribbon with a nonvanishing normal vector field.
Immersing an oriented ribbon into an oriented surface defines a framing
by arbitrarily choosing a vector field into the left hand side of the surface,
as viewed from the ribbon direction.

A regular diagram as in Definition \ref{def:regular diagram} defines a blackboard framing on its ribbons and fold arcs.
Similarly, a Kirby diagram defines a blackboard framing for its 2-handle attachments.
For any given framing, it is always possible to match it with the blackboard framing by repeatedly applying the first Reidemeister move.

The notion of blackboard framing is standard for Kirby diagrams without 3-handles,
but has not been described yet for diagrams with 3-handles.
The projection of the diagram onto the plane locally defines a right hand side and a left hand side for every ribbon.
To assume the blackboard framing for at 3-handle attachment means to require that it may not change the side,
so the 3-handle attaching sphere must stay parallel to the plane close to where it attaches to a 2-handle.

As with any convention,
it must be shown that later constructions do not depend on the choices made.
Here, we have to show that the invariant does not depend on whether a particular 3-handle is attached on the right hand side or the left hand side.
Assume that 3-handles labelled $g_1, g_2, \dots g_K$ are attached to a 2-handle on the right hand side (from top to bottom)
and $g_{K+1}, \dots g_N, g$ on the left hand side (from bottom to top),
which implies that the 2-handle is labelled with $\Omega_{g'g}$,
where $g' \coloneqq g_1 g_2 \cdots g_N$.
If the 3-handle labelled $g$ is isotoped around to the right side,
the grade changes from $g'g$ to $gg'$,
and the 3-handle attaching sphere now covers the 2-handle circle.
Since $\acts{g}{\Omega_{g'g}} \cong \Omega_{gg'}$,
the diagram still evaluates to the same quantity.

\subsection{Related graphical calculi}

A standard reference for graphical calculus is \cite[Sections 3 and 4]{TuraevHFTAndGCats2000}.
Like the one presented here,
it also resembles the calculus of braided categories,
but instead of using 2-dimensional sheets,
it implements the $G$-crossed structure by means of a group homomorphism from the diagram complement to $G$.
We chose a slightly different route in this article and defined a more elaborate diagram language which matches Kirby calculus with 3-handles more closely,
but is also similar to the graphical calculus of semistrict 3-categories with duals \cite{Schaumann:GrayCats},
as mentioned in Section \ref{sec:3-cats}.
Turaev's calculus can be recovered soundly from the one presented here in Section \ref{sec:Graphical calculus},
as will be shown now.

First, we will paraphrase the relevant definitions from \cite[Sections 3 and 4]{TuraevHFTAndGCats2000} in our conventions.
\begin{definition}
	Let $G$ be a group,
	and $\mathcal{C}$ a $G\times$-BSFC.
	A \textbf{$\mathcal{C}$-coloured $G$-link} consists of:
	\begin{itemize}
		\item A chosen base point $z \in S^3$.
		\item A framed link $L \in S^3$.
		\item A group homomorphism $g\colon \pi_1 \left( S^3 \backslash L \right) \to G$.
		\item For every homotopy class of paths $\gamma$ from $z$ to some point in $L$,
			an object $X_\gamma$ in $\mathcal{C}$.
	\end{itemize}
	The objects $X_\gamma$ are subject to certain coherence laws detailed in \cite[Section 3.1]{TuraevHFTAndGCats2000}.
	For example, the action of a loop $\beta\colon S^1 \to S^3 \backslash L$ on a path $\gamma$
	corresponds to the group action of $g([\beta])$ on the labelling object $X_\gamma$.
\end{definition}

Whenever a diagram (Definition \ref{def:unlabelled diagram}) is in particular a link,
a $C$-colouring can be recovered from a labelling (Definition \ref{def:labelling}):

\begin{definition}
	Let $\mathcal{D}$ be a labelled diagram with no points.
	Construct the associated $\mathcal{C}$-coloured $G$-link as follows:
	\begin{itemize}
		\item The base point is $\infty$.
		\item The link is given by the ribbons $\mathcal{D}_1$,
			which necessarily are all circles.
		\item The group homomorphism is defined on a loop around a circle as the product of group elements of the incident sheets on the circle,
			just as in the typing relations of \ref{def:labelling}.
		\item For each circle $r$,
			choose one path from $\infty$ to it,
			running below the diagram in the projection onto the plane.
			Its homotopy class is assigned the labelling object $X(r)$.
			Other homotopy classes of paths to the same circle are assigned objects according to the conditions in \cite[Section 3.1]{TuraevHFTAndGCats2000}.
	\end{itemize}
	It is a tedious exercise to see that the group homomorphism and the path colourings are indeed well-defined.
	\fxerror{Do that exercise}

	The calculus of $\mathcal{C}$-coloured links is generalised in \cite[Section 4]{TuraevHFTAndGCats2000} to graphs with coupons labelled with morphisms in the category.
	The definition can then be repeated with general labelled diagrams, i.e. with points.
\end{definition}

\begin{lemma}
	The evaluation of labelled diagrams
	(Definition \ref{def:labelled diagram})
	extends to a functor of $G\times$-BSFCs.
\end{lemma}
\begin{proof}
	A routine
\end{proof}
\begin{corollary}
	Given a labelled diagram,
	its evaluation (Definition \ref{def:labelled diagram}) coincides with the action of the unique functor defined in \cite[Theorems 3.6 and 4.5]{TuraevHFTAndGCats2000} on the associated $\mathcal{C}$-coloured graph.
\end{corollary}
\begin{proof}
	The translation of the ribbons to (crossed) braidings is well-known,
	see \cite[Chapter I]{turaev:QuantumInvariants} and \cite[Theorem 4.5]{TuraevHFTAndGCats2000}.
	It remains to check whether the group action evaluates the same.
	The group action of the calculus in this work differs by the one from \cite{TuraevHFTAndGCats2000}
	just by the extra flexibility arising from folds of the sheets.
	This gives rise to isomorphic, but different group actions,
	and thus to different placements of the coherence isomorphisms.
	An example is Figure \ref{fig:G-coherences}.
	But from the coherence theorem of $G\times$-BSFCs \cite{Mueger2010:StructureBraidedCrossedGCategories, Cui2016TQFTs} we know that the whole morphism,
	and thus the evaluation is invariant with respect to these choices.
\end{proof}
\fxwarning{Could work out more details}

\section{1-handle slide lemmas in fusion categories}

To the knowledge of the author,
the following lemmas are not mentioned explicitly in the literature.

\tikzmath{
	\boundingwidth = 0.7;
}

\begin{definition}
	Let $\mathcal{C}$ be a pivotal fusion category and $A \in \mathcal{C}$ an object.
	Assuming the summation conventions from Figure \ref{fig:dual bases},
	the \textbf{projection onto the monoidal unit}
	$\pi^\mathcal{I}_A\colon A \to A$
	is defined as the following morphism:
	\begin{equation}
		\pi^\mathcal{I}_A \coloneqq
		\begin{tikzpicture} [ baseline, ]
			\node [ plaquette ] (alpha)      at (0,  0.5) {$\alpha$};
			\node [ plaquette ] (alphatilde) at (0, -0.5) {$\tilde\alpha$};
			\draw [ thick, directed ]
				(alpha)      -- (0,  2) node [ above ] {$A$}
			;
			\draw [ thick, opdirected ]
				(alphatilde) -- (0, -2) node [ below ] {$A$};
			\useasboundingbox (-\boundingwidth, 0) -- (\boundingwidth, 0);
		\end{tikzpicture}
	\end{equation}
\end{definition}

\begin{lemma}
	\fxwarning{Fix labels}
	The projection onto the monoidal unit is a natural transformation,
	i.e. $\pi^\mathcal{I}_A$ is natural in $A$.
\end{lemma}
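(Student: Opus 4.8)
The plan is to unwind $\pi^{\mathcal{I}}_A$ into the dual-basis language of Definition \ref{def:dual bases} and then run a short linear-algebra computation. Write $\{\alpha_i\}$ for a basis of $\langle A\rangle = \mathcal{C}(\mathcal{I},A)$ and $\{\tilde\alpha_i\}$ for the corresponding dual basis, which we regard as morphisms $A \to \mathcal{I}$ via $\langle A^*\rangle \cong \mathcal{C}(A,\mathcal{I})$. By construction the coupon picture defining $\pi^{\mathcal{I}}_A$ reads, from bottom to top, as $\sum_i \alpha_i \circ \tilde\alpha_i \in \mathcal{C}(A,A)$. For $f\colon A \to B$, naturality is the identity $\pi^{\mathcal{I}}_B \circ f = f \circ \pi^{\mathcal{I}}_A$ in $\mathcal{C}(A,B)$, which is what I would establish directly.

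First I would record the two facts that make the argument run. (i) The morphism pairing is essentially composition landing in $\mathcal{C}(\mathcal{I},\mathcal{I}) \cong \Co$, so pre- and post-composition are mutually adjoint: $(\beta \circ f, \alpha) = (\beta, f \circ \alpha)$ for $\alpha \in \mathcal{C}(\mathcal{I},A)$, $\beta \in \mathcal{C}(B,\mathcal{I})$. (ii) Nondegeneracy of the pairing yields a resolution of the identity: $\gamma = \sum_i (\tilde\alpha_i,\gamma)\,\alpha_i$ for every $\gamma \in \mathcal{C}(\mathcal{I},A)$, and dually $\delta = \sum_i (\delta,\alpha_i)\,\tilde\alpha_i$ for every $\delta \in \mathcal{C}(A,\mathcal{I})$, with the analogous statements for $B$ using a basis $\{\beta_j\}$ of $\mathcal{C}(\mathcal{I},B)$ and its dual $\{\tilde\beta_j\}$. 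Both are immediate from the definitions in the excerpt, and finiteness of all the $\Hom$-spaces (fusion category) guarantees the sums are finite.

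The computation is then: $f\circ\pi^{\mathcal{I}}_A = \sum_i (f\circ\alpha_i)\circ\tilde\alpha_i$; expand $f\circ\alpha_i = \sum_j (\tilde\beta_j, f\circ\alpha_i)\,\beta_j$ by (ii); rewrite $(\tilde\beta_j, f\circ\alpha_i) = (\tilde\beta_j\circ f, \alpha_i)$ by (i); and collapse the sum over $i$ using (ii) with $\delta = \tilde\beta_j\circ f$, giving $\sum_j \beta_j\circ(\tilde\beta_j\circ f) = \pi^{\mathcal{I}}_B\circ f$. Equivalently this is the graphical assertion that the entangled dual-basis vector of Figure \ref{fig:dual bases} absorbs a morphism inserted on its inner legs, so one could instead present it as a one-line diagrammatic move; I would probably include the diagram as well as the algebraic version.

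I do not anticipate a genuine obstacle here. The only care needed is bookkeeping: fixing which slot of the pairing $(-,-)$ carries which morphism space, and tracking the orientations of the $A$-strands in the coupon picture (the lower one being \opdirected), so that the dual basis is taken in $\langle A^*\rangle$ rather than $\langle A\rangle$. Once those conventions are pinned to Definition \ref{def:dual bases}, the identity $\pi^{\mathcal{I}}_B\circ f = f\circ\pi^{\mathcal{I}}_A$ is forced, proving naturality.
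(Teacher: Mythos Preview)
Your argument is correct and is essentially the same as the paper's. The paper phrases the computation diagrammatically: it inserts the full completeness relation $\sum_{X \in \mathcal{O}(\mathcal{C})} \qdim{X}\,\beta\tilde\beta = 1_B$ on the $B$-strand, observes that only the $X = \mathcal{I}$ summand survives (since the subdiagram $\tilde\beta \circ f \circ \alpha$ is a morphism $\mathcal{I} \to X$), and then runs the symmetric manipulation on the $A$-side to move the dual-basis pair from $A$ to $B$. Your steps (ii) and (i) are exactly the algebraic content of that insertion and the subsequent absorption of $f$, so the two proofs differ only in presentation.
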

\begin{proof}
	Explicitly, let $f\colon A \to B$ be a morphism in $\mathcal{C}$.
	Then the following is valid:
	\begin{anfxwarning}{To fix in the pictures}
		\begin{itemize}
			\item Directions?
				But those conventions don't need them?
				How about the other pictures?
		\end{itemize}
	\end{anfxwarning}
	\begin{equation}
		\begin{tikzpicture} [ baseline, yscale = 1.5 ]
			\node [ plaquette ] (f)          at (0,  0) {$f$};
			\node [ plaquette ] (alpha)      at (0, -1) {$\alpha$};
			\node [ plaquette ] (alphatilde) at (0, -2) {$\tilde\alpha$};
			\draw [ thick, directed ]
				(f)     -- (0, 3) node [ above ] {$B$}
			;
			\draw [ thick, directed ]
				(alpha) -- node [ right ] {$A$} (f)
			;
			\draw [ thick, opdirected ]
				(alphatilde) -- (0, -3) node [ below ] {$A$}
			;
			\useasboundingbox (-\boundingwidth, 0) -- (\boundingwidth, 0);
		\end{tikzpicture}
		=
		\sum_{X \in \mathcal{O}(\mathcal{C})} \qdim{X}
		\begin{tikzpicture} [ baseline, yscale = 1.5 ]
			\node [ plaquette ] (beta)       at (0,  2) {$\beta$};
			\node [ plaquette ] (betatilde)  at (0,  1) {$\tilde\beta$};
			\node [ plaquette ] (f)          at (0,  0) {$f$};
			\node [ plaquette ] (alpha)      at (0, -1) {$\alpha$};
			\node [ plaquette ] (alphatilde) at (0, -2) {$\tilde\alpha$};
			\draw [ thick, directed ]
				(beta)      -- (0, 3) node [ above ] {$B$}
			;
			\draw [ thick, directed ]
				(betatilde) -- node [ right ] {$X$} (beta)
			;
			\draw [ thick, directed ]
				(f)         -- node [ right ] {$B$} (betatilde)
			;
			\draw [ thick, directed ]
				(alpha)     -- node [ right ] {$A$} (f)
			;
			\draw [ thick, opdirected ]
				(alphatilde) -- (0, -3) node [ below ] {$A$}
			;
			\useasboundingbox (-\boundingwidth, 0) -- (\boundingwidth, 0);
		\end{tikzpicture}
		=
		\begin{tikzpicture} [ baseline, yscale = 1.5 ]
			\node [ plaquette ] (beta)       at (0,  2) {$\beta$};
			\node [ plaquette ] (betatilde)  at (0,  1) {$\tilde\beta$};
			\node [ plaquette ] (f)          at (0,  0) {$f$};
			\node [ plaquette ] (alpha)      at (0, -1) {$\alpha$};
			\node [ plaquette ] (alphatilde) at (0, -2) {$\tilde\alpha$};
			\draw [ thick, directed ]
				(beta)    -- (0, 3) node [ above ] {$B$}
			;
			\draw [ thick, directed ]
				(f)       -- node [ right ] {$B$} (betatilde)
			;
			\draw [ thick, directed ]
				(alpha)   -- node [ right ] {$A$} (f)
			;
			\draw [ thick, opdirected ]
				(alphatilde) -- (0, -3) node [ below ] {$A$}
			;
			\useasboundingbox (-\boundingwidth, 0) -- (\boundingwidth, 0);
		\end{tikzpicture}
		=
		\sum_{X \in \mathcal{O}(\mathcal{C})} \qdim{X}
		\begin{tikzpicture} [ baseline, yscale = 1.5 ]
			\node [ plaquette ] (beta)       at (0,  2) {$\beta$};
			\node [ plaquette ] (betatilde)  at (0,  1) {$\tilde\beta$};
			\node [ plaquette ] (f)          at (0,  0) {$f$};
			\node [ plaquette ] (alpha)      at (0, -1) {$\alpha$};
			\node [ plaquette ] (alphatilde) at (0, -2) {$\tilde\alpha$};
			\draw [ thick, directed ]
				(beta)       -- (0, 3) node [ above ] {$B$}
			;
			\draw [ thick, directed ]
				(f)          -- node [ right ] {$B$} (betatilde)
			;
			\draw [ thick, directed ]
				(alpha)      -- node [ right ] {$A$} (f)
			;
			\draw [ thick, directed ]
				(alphatilde) -- node [ right ] {$X$} (alpha)
			;
			\draw [ thick, opdirected ]
				(alphatilde) -- (0, -3) node [ below ] {$A$}
			;
			\useasboundingbox (-\boundingwidth, 0) -- (\boundingwidth, 0);
		\end{tikzpicture}
		=
		\begin{tikzpicture} [ baseline, yscale = 1.5 ]
			\node [ plaquette ] (beta)       at (0,  2) {$\beta$};
			\node [ plaquette ] (betatilde)  at (0,  1) {$\tilde\beta$};
			\node [ plaquette ] (f)          at (0,  0) {$f$};
			\draw [ thick, directed ]
				(beta)     -- (0, 3) node [ above ] {$B$}
			;
			\draw [ thick, directed ]
				(f)        -- node [ right ] {$B$} (betatilde)
			;
			\draw [ thick, opdirected ]
				(f)        -- (0, -3) node [ below ] {$A$}
			;
			\useasboundingbox (-\boundingwidth, 0) -- (\boundingwidth, 0);
		\end{tikzpicture}
	\end{equation}
\end{proof}
\begin{remark}
	For $G\times$-BSFCs,
	the identity holds even if there is a group action on $\tilde\alpha$ or $\alpha$:
	In the middle diagram,
	the disconnected part $\alpha \circ f \circ \tilde\beta$ forms a closed diagram,
	and by Remark \ref{rem:3-handle closed diagram} it can be moved freely from into or out of a group action.
\end{remark}
\begin{corollary}
	\label{cor:1-1-slide}
	The 1-1-handle slide is valid in the graphical calculus of pivotal fusion categories.
	Explicitly, let $f\colon \mathcal{I} \to A$.
	Then the following is true:
	\begin{equation}
		\begin{tikzpicture} [ baseline ]
			\node [ plaquette ] (f)          at (-1, 1) {$f$};
			\node [ plaquette ] (alpha)      at ( 1, 0) {$\alpha$};
			\node [ plaquette ] (alphatilde) at ( 1,-1) {$\tilde\alpha$};
			\draw [ thick, directed ]
				(f)          -- +(0, 3) node [ above ] {$A$};
			\draw [ thick, directed ]
				(alpha)      -- +(0, 4) node [ above ] {$B$};
			\draw [ thick, opdirected ]
				(alphatilde) -- +(0,-1) node [ below ] {$B$};
			\useasboundingbox (1+\boundingwidth, 0);
		\end{tikzpicture}
		=
		\begin{tikzpicture} [ baseline ]
			\node [ plaquette ] (f)          at (-1, 1) {$f$};
			\node [ plaquette ] (beta)       at ( 0, 3) {$\beta$};
			\node [ plaquette ] (betatilde)  at ( 0, 2) {$\tilde\beta$};
			\draw [ thick, directed ]
				(f)         -- node [ above left ] {$A$} (betatilde);
			\draw [ thick, directed ]
				(beta)      -- +(-1, 1) node [ above ] {$A$};
			\draw [ thick, directed ]
				(beta)      -- +( 1, 1) node [ above ] {$B$};
			\draw [ thick, opdirected ]
				(betatilde) -- +( 1,-4) node [ below ] {$B$};
			\useasboundingbox (-1-\boundingwidth, 0);
		\end{tikzpicture}
	\end{equation}
\end{corollary}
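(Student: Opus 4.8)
The plan is to recognise both sides of the claimed identity as morphisms $B \to A \otimes B$ built out of the projection $\pi^{\mathcal{I}}$ onto the monoidal unit defined just above, and then to invoke the naturality of $\pi^{\mathcal{I}}$ established in the preceding lemma. Concretely, in the left-hand diagram the coupon $f$ and the pair $\alpha$, $\tilde\alpha$ sit in disjoint vertical columns, so --- reading off the summation convention of Figure \ref{fig:dual bases} and the definition of $\pi^{\mathcal{I}}$ --- the left-hand side is the tensor product $f \otimes \pi^{\mathcal{I}}_B$. In the right-hand diagram the pair $\beta$, $\tilde\beta$ is a dual basis for the object $A \otimes B$, so that $\sum_i \beta_i \circ \tilde\beta_i = \pi^{\mathcal{I}}_{A \otimes B}$, and the coupon $f\colon \mathcal{I} \to A$ is plugged into the $A$-leg of $\tilde\beta$; hence the right-hand side equals $\pi^{\mathcal{I}}_{A \otimes B} \circ (f \otimes 1_B)$.

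With these two identifications the corollary is one application of naturality. Applying the naturality square of $\pi^{\mathcal{I}}$ to the morphism $f \otimes 1_B \colon \mathcal{I} \otimes B \to A \otimes B$ gives $\pi^{\mathcal{I}}_{A \otimes B} \circ (f \otimes 1_B) = (f \otimes 1_B) \circ \pi^{\mathcal{I}}_{\mathcal{I} \otimes B}$, and the interchange law together with the unit coherence $\mathcal{I} \otimes B \cong B$ rewrite the last term as $f \otimes \pi^{\mathcal{I}}_B$. This closes the chain LHS $= f \otimes \pi^{\mathcal{I}}_B = (f \otimes 1_B) \circ \pi^{\mathcal{I}}_B = \pi^{\mathcal{I}}_{A \otimes B} \circ (f \otimes 1_B) = $ RHS. (If one prefers not to quote the lemma, the needed instance can be reproved directly by the same manipulation: insert a resolution of the identity $\sum_i \beta_i \otimes \tilde\beta_i$ on the $A \otimes B$ strand, note that the resulting closed subdiagram is a scalar that is evaluated by the dual-basis pairing, and resum.)

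The only fiddly point --- and the main, very mild, obstacle --- is the bookkeeping of conventions when passing from the pictures to the formulae: one must check that the cyclic order of the legs of $\beta$, $\tilde\beta$ and the dualisation $(A\otimes B)^* = B^* \otimes A^*$ are exactly those of the standard dual-basis convention for $A \otimes B$, and that ``plugging $f$ into the $A$-leg of $\tilde\beta$'' is faithfully what the diagram depicts. There is no conceptual difficulty beyond this. Finally, for the $G$-crossed applications of this corollary in Lemma \ref{lem:invariance slides}, the identity persists even when a sheet acts on $\alpha$ or $\tilde\alpha$ (equivalently on $\beta$ or $\tilde\beta$): such an action only touches the closed subdiagram produced in the argument, which by Remark \ref{rem:3-handle closed diagram} is invariant under group actions, so the proof carries over verbatim.
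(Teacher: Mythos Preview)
Your proof is correct and is exactly the paper's approach: the paper's one-line proof is ``Apply the previous lemma to $f \otimes 1_B$,'' and you have simply unpacked what this means by identifying the two diagrams as $f \otimes \pi^{\mathcal{I}}_B$ and $\pi^{\mathcal{I}}_{A\otimes B}\circ(f\otimes 1_B)$ and invoking naturality. Your closing remark about the $G$-crossed case also matches the Remark the paper places immediately after the lemma.
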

\begin{proof}
	Apply the previous lemma to $f \otimes 1_B$.
\end{proof}

\printbibliography

\end{document}